\DeclareMathOperator*{\essinf}{ess\,inf}
\DeclareMathOperator*{\esssup}{ess\,sup}
\DeclareMathOperator*{\supp}{supp}
\newcolumntype{M}[1]{>{\centering\arraybackslash}m{#1}}
\newcolumntype{N}{@{}m{0pt}@{}}
\newcommand*\diff{\mathop{}\!\mathrm{d}}
\def\4#1{\mathbb{#1}}
\def\5#1{\mathcal{#1}}
\def\R{{\4R}}
\def\N {{\4N}}
\newcommand{\eps}{\varepsilon}
\newcommand{\la}{{\lambda}}
\begin{document}
	\title[Critical double phase anisotropic problems]
	{Double phase anisotropic variational problems involving critical growth}
	\author[K. Ho]{Ky Ho}
	\address[K. Ho]{Institute of Applied Mathematics, University of Economics Ho Chi Minh City (UEH), 59C, Nguyen Dinh Chieu Street, District 3, Ho Chi Minh City, Vietnam}
	\email{kyhn@ueh.edu.vn}
	
			\author[Y.-H. Kim]{Yun-Ho Kim$^{*}$}
	\address[Y.-H. Kim]{Department of Mathematics Education, Sangmyung University, Seoul, 03016, Korea}
	\email{kyh1213@smu.ac.kr}
	
	\author[C. Zhang]{Chao Zhang}
	\address[C. Zhang]{School of Mathematics and Institute for Advanced Study in Mathematics, Harbin Institute of Technology, Harbin 150001, PR China}
	\email{czhangmath@hit.edu.cn}

\thanks{* Corresponding author.}
	
	\date{}
	\subjclass[2020]{35J20, 35J60, 35J70, 47J10, 46E35.}

	\keywords{Variable exponent elliptic operator, variable exponent Orlicz--Sobolev spaces, critical growth, concentration-compactness principle, variational methods}

	\begin{abstract}
		In this paper, we investigate some existence results for double phase anisotropic variational problems involving critical growth. We first establish a Lions type concentration-compactness principle and its variant at infinity for the solution space, which are our independent interests. By employing these results, we obtain a nontrivial nonnegative solution to problems of generalized concave-convex type. We also obtain infinitely many solutions when the nonlinear term is symmetric. Our results are new even for the $p(\cdot)$-Laplace equations.
	\end{abstract}
	\maketitle
	\numberwithin{equation}{section}
	\newtheorem{theorem}{Theorem}[section]
	\newtheorem{lemma}[theorem]{Lemma}
	\newtheorem{proposition}[theorem]{Proposition}
	\newtheorem{corollary}[theorem]{Corollary}
	\newtheorem{dn}[theorem]{Definition}
	\newtheorem{example}[theorem]{Example}
	\newtheorem{remark}[theorem]{Remark}
	\newtheorem{assumption}[theorem]{Assumption}
	
	\allowdisplaybreaks

	\newcommand{\abs}[1]{\left\lvert#1\right\rvert}
	\newcommand{\norm}[1]{|\!|#1|\!|}
	\newcommand{\Norm}[1]{\mathinner{\Big|\!\Big|#1\Big|\!\Big|}}
	\newcommand{\curly}[1]{\left\{#1\right\}}
	\newcommand{\Curly}[1]{\mathinner{\mathopen\{#1\mathclose\}}}
	\newcommand{\round}[1]{\left(#1\right)}
	\newcommand{\bracket}[1]{\left[#1\right]}
	\newcommand{\scal}[1]{\left\langle#1\right\rangle}
	\newcommand{\Div}{\text{\upshape div}}
	\newcommand{\dotsr}{\dotsm}
	\newcommand{\ra}{\rightarrow}
	\newcommand{\ran}{\rangle}
	\newcommand{\lan}{\langle}
	\newcommand{\ol}{\overline}
	\newcommand{\e}{{\varepsilon}}
	\newcommand{\al}{{\alpha}}
	\newcommand{\tid}[1]{{\widetilde{#1}}}
	\newcommand{\M}{\mathcal{M}}
	\newcommand{\W}{\mathcal{W}}

	\section{Introduction}
	In this paper, we investigate the existence of a nontrivial nonnegative solution and infinitely many solutions to the following double phase anisotropic problem of Schr\"{o}dinger--Kirchhoff type:
\begin{equation}\label{Prob}
	- M\left(\int_{\mathbb{R}^N} A(x,\nabla u)\diff x\right)\operatorname{div}a(x,\nabla u)+V(x)|u|^{\alpha(x)-2}u=\lambda f(x,u)+b(x)|u|^{t(x)-2}u \quad \text{in  }   \4{R}^N,
\end{equation}
where  $a(x,\xi)=\nabla_\xi A(x,\xi)$ behaves like $|\xi|^{q(x)-2}\xi$ for small $|\xi|$ and like $|\xi|^{p(x)-2}\xi$ for large $|\xi|$ with $1< p(\cdot)<q(\cdot)<N$; $t\in C(\R^N)$ satisfies $1<\alpha(\cdot)\leq t(\cdot)\leq \frac{Np(\cdot)}{N-p(\cdot)}=:p^\ast(\cdot)$ with
\begin{equation}\label{critical_C}
\mathcal{C}:=\left\{x\in\mathbb{R}^N:\, t(x)=p^\ast(x)\right\}\ne\emptyset;
\end{equation}
 $M: \, [0, \infty) \to \mathbb{R}$ is a real function, which is continuous and nondecreasing on some interval $[0,\tau_0)$; $V: \mathbb{R}^N  \to
 \mathbb R$ is measurable and positive a.e. in $\R^N$; $f : \mathbb{R}^N \times \mathbb R \to
\mathbb R$ is a Carath\'{e}odory function and is of subcritical growth; $b\in L^\infty(\mathbb{R}^N)$ and $b(x)>0$ for a.e. $x\in\R^N$; and $\lambda>0$ is a parameter.

\vskip5pt
Throughout this paper, for $m,n\in C(\mathbb{R}^N),$ by $m(\cdot)\ll n(\cdot)$ we mean $\inf_{x\in\mathbb{R}^N} (n(x)-m(x))>0$. Moreover, we denote
$$C^{0,1}_+(\4{R}^N):=\left\{r\in C(\R^N): \, r \text{ is Lipschitz continuous and } 1\ll r(\cdot)\right\}.$$
We make the following assumptions:
\begin{itemize}
	\item [$(A1)$] The function $A \in C\left(\4{R}^N \times \mathbb R^N \right)$ is continuously differentiable with respect to the second variable with $\nabla_\xi A=:a$, and verifies $A(x,-\xi)=A(x,\xi)$ and $A(x,0)=0$ for all $(x,\xi)\in \4{R}^N \times \mathbb R^N$.
	\item[$(A2)$] $A(x,\cdot)$ is strictly convex in $\4{R}^N$ for all $x\in \4{R}^N$.
	\item[$(A3)$] There exist positive constants $N_1,N_2$ and variable functions $p$ and $q$ such that for all $(x,\xi) \in \4{R}^N \times \4{R}^N$, it holds:
	\[a(x, \xi)\cdot \xi  \geq \begin{cases}
		N_1|\xi|^{p(x)} & \text{ if } \,\, |\xi| \gg 1\\
		N_1|\xi|^{q(x)} & \text{ if } \,\, |\xi| \ll 1
	\end{cases} \quad \text{and} \quad |a(x,\xi)| \leq \begin{cases}
		N_2|\xi|^{p(x)-1} & \text{ if } \,\, |\xi| \gg 1\\
		N_2|\xi|^{q(x)-1} & \text{ if } \,\, |\xi| \ll 1
	\end{cases}. \]
	
	\item[$(A4)$] $p,q \in C^{0,1}_+(\4{R}^N)$ and $p(\cdot) \ll q(\cdot) \ll \min\{N,p^\ast(\cdot)\}$.
	\item [$(A5)$] $a(x,\xi)\cdot \xi \leq s(x) A(x,\xi)$ for any $(x,\xi) \in \4{R}^N \times \4{R}^N$, where $s \in C^{0,1}_+(\4{R}^N)$  satisfies $q(\cdot) \leq s(\cdot) \ll p^\ast (\cdot)$.
	\item [$(A6)$] $A$ is uniformly convex, that is, for any $\eps\in (0,1)$, there exists $\delta(\eps)\in (0,1)$ such that $|u-v|\leq \eps \max\{|u|,|v|\}$ or $A(x,\frac{u+v}{2})\leq \frac{1}{2}(A(x,u)+A(x,v))$ for any $x,u,v\in\R^N$.
\end{itemize}

Note that from $(A1)$ we easily obtain
\begin{equation}\label{a-odd}
	a(x,-\xi)=-a(x,\xi), \ \ \forall (x,\xi)\in \R^N\times\R^N.
\end{equation}
Moreover, as shown in \cite[Eqn. (3)]{Zhang-Radulescu.2018} there exist positive constants $M_1, M_2$ such that
\begin{equation}\label{Est1}
	\begin{rcases}
		M_1 |\xi|^{p(x)}, & |\xi| > 1\\
		M_1 |\xi|^{q(x)}, & |\xi| \leq 1
	\end{rcases} \leq A(x,\xi) \leq a(x,\xi)\cdot \xi \leq  \begin{cases}
		M_2|\xi|^{p(x)}, & |\xi| >1\\
		M_2|\xi|^{q(x)}, & |\xi| \leq 1
	\end{cases},  \ \ \forall (x,\xi) \in \4{R}^N \times \4{R}^N.
\end{equation}

The basic assumptions for the potential $V$ and the variable exponent $\alpha$ to get the desired solution space are the following:
\begin{itemize}
	\item  [$(V1)$] $V \in L^1_{\textup{loc}}(\4{R}^N)$ and $\essinf_{x\in\4{R}^N}V(x)=V_0>0$.
	\item[$(P1)$ ] $\alpha\in C(\R^N)$ such that $1 \ll \alpha(\cdot) \ll p^\ast(\cdot)\frac{N-1}{N}$ and $\alpha(\cdot) \leq p^\ast(\cdot)\frac{q'(\cdot)}{p'(\cdot)}$ in $\R^N$ with  $r'(\cdot):=\frac{r(\cdot)}{r(\cdot)-1}$ for $r\in C(\R^N)$ satisfying $1 \ll r(\cdot)$.
\end{itemize}
For obtaining our concentration-compactness principle, we will need the following further assumption:
\begin{itemize}
	\item [$(V2)$] $\esssup_{x\in B_R}\, V(x)<\infty$ for any $R>0$.
\end{itemize}
Moreover, for covering some classes of subcritical terms we will include the following assumption on $V$ in some situations:
\begin{itemize}
	\item [$(V3)$]  $V(x)\to\infty$ as $|x|\to\infty$.
\end{itemize}	

\smallskip
The studies of differential equations and variational problems with nonhomogeneous operators and non-standard growth conditions have attracted extensive attentions during the last decades. The interest in the equations associated by nonhomogeneous nonlinearities has consistently developed in light of the pure or applied mathematical perspective to illustrate some concrete phenomena arising from nonlinear elasticity, plasticity theory, and plasma physics. Let us recall some related results by way of motivation. Azzollini et al. in \cite{Azzollini2014, Azzollini2015} introduced a new class of nonhomogeneous operators with a variational structure:
$$
-\operatorname{div}(\phi'(|\nabla u|^{2})\nabla u),
$$
where $\phi\in C^1(\mathbb{R}^+,\mathbb{R}^+)$ has a different growth near zero and infinity. Such a behaviour occurs if
$$
\phi(t)=2((1+t)^{\frac{1}{2}}-1),
$$
which corresponds to the prescribed mean curvature operator defined by
$$
\operatorname{div}\left(\frac{\nabla u}{\sqrt{1+|\nabla u|^2}}\right).
$$
In particular, Azzollini, d'Avenia and Pomponio in \cite{Azzollini2014} proved the existence of a nontrivial nonnegative radially symmetric solution for the quasilinear elliptic problem
\begin{align}\label{Prob3}
	\begin{cases} \displaystyle
		-\operatorname{div} (\phi'(|\nabla u|^{2})\nabla u)+|u|^{\alpha-2}u=|u|^{s-2}u
		\quad \textmd{in }  \mathbb{R}^N,\\
		u\to 0, \quad\textmd{as }  |x|\rightarrow\infty,
	\end{cases}
\end{align}
where $N\geq2$, $\phi(t)$ behaves like $t^{q/2}$ for small $t$ and $t^{p/2}$ for large $t$, and
\begin{equation}\label{4}
	1<p<q<N, \quad 1<\alpha\leq\frac{p^*q'}{p'}, \quad \max\{\alpha,q\}<s<p^*
\end{equation}
with $p'=p/(p-1)$ and $q'=q/(q-1)$. Under the above assumption (\ref{4}), Chorfi and  R\u{a}dulescu \cite{Chorfi2016} considered the standing wave solutions for the following Schr\"{o}dinger equation with unbounded potential:
$$
-\operatorname{div}(\phi'(|\nabla u|^{2})\nabla u)+a(x)|u|^{\alpha-2}u=f(x,u) \quad \textmd{in }  \mathbb{R}^N,
$$
where the nonlinearity $f:\mathbb{R}^N\rightarrow\mathbb{R}$ also satisfies the subcritical growth and $a:\Bbb R^N \to (0,\infty)$ is a singular potential satisfying some conditions.

Anisotropic partial differential equations have recently gained significant attention, thanks to their applications in double and multiphase variational energies, along with their relevance in integral form anisotropic energies. For insights into this topic, we refer the reader to the survey paper \cite{M2020} and the references therein. Problem \eqref{Prob3} is relevant to double phase anisotropic phenomenon, in the sense that the differential operator has a different growth near zero and infinity. The double phase problems are described by the following functional
$$\int_\Omega H(x,\nabla u)\,dx$$
with the so-called $(p,q)$-growth conditions:
$$
	c|\xi|^p\leq H(x,\xi)\leq C(|\xi|^q+1).
$$
This $(p,q)$-growth condition was first treated by Marcellini \cite{M1989,M1991,M1993,M1996} and it has been extensively studied in the last decades. Particularly double phase functionals have been introduced by Zhikov in the context of homogenization and
Lavrentiev's phenomenon \cite{Z1986,Z1995}. After that it engages the enormous researchers’ attention to the development of both theoretical and applications aspects of various double phase differential problems. For an overview of the subject, we refer the readers to the survey paper \cite{Min-Rad2021}. Regularity theory for double phase functionals had been an unsolved issue for a while. However, we would like to mention  a series of remarkable papers by Mingione et al. \cite{BCM,BCM2,BCM3,CM,CM2,CM3}. Also we refer to the works of Bahrouni-R\u{a}dulescu-Repov\v s~\cite{BRR}, Byun-Oh \cite{Byun-Oh-2020}, Colasuonno-Squassina \cite{CS1},
Gasi\'nski-Winkert~\cite{GW20b,GW21}, Liu-Dai \cite{LD}, Papageorgiou-R\u{a}dulescu-Repov\v s~\cite{PRR,PRR20}, Perera-Squassina \cite{PS}, Ragusa-Tachikawa \cite{RT20}, Zhang-R\u{a}dulescu~\cite{Zhang-Radulescu.2018},  Zeng-Bai-Gasi\'nski-Winkert~\cite{Zeng-Bai-Gasinski-Winkert-CVPDEs-2020,Zeng-Bai-Gasinski-Winkert-ANONA-2021}.

In recent years the study on partial differential equations with variable exponent has received an increasing deal of attention because they can be perceived as their application in the mathematical modeling of many physical phenomena occurring in diverse studies related to electro-rheological fluids, image processing and the flow in porous media {\it etc}. There are many reference papers associated with the study of elliptic problems with variable exponent; see \cite{Cruz,Diening,Rad-Rep} and the references therein for more background on applications. Also we refer the readers to \cite{CGHW,KKOZ22,RT20,Zhang-Radulescu.2018,SRRZ} for double phase differential problems with variable exponent. In this directions, Zhang and R\u{a}dulescu in a recent work \cite{Zhang-Radulescu.2018} extended the results in \cite{Chorfi2016} to the more general variable exponent case
\begin{equation}\label{EQ-1}
-{\rm div}\, {\Phi}(x,\nabla u)+V(x)|u|^{\alpha(x)-2}u=f(x,u)\quad \textmd{in }  \mathbb{R}^N,
\end{equation}
where the differential operator ${\Phi}(x,\xi)$  has behaviors like $|\xi|^{q(x)-2}\xi$ for small $\xi$ and like $|\xi|^{p(x)-2}\xi$ for large $\xi$, $1<\alpha(\cdot)\le p(\cdot)<q(\cdot)<N$. In order to analyze problem \eqref{EQ-1}, the authors gave useful elementary properties of a function space, called the variable exponent Orlicz--Sobolev space that is a generalization of Orlicz--Sobolev space setting in \cite{Azzollini2014}. In particular, they provided fundamental imbedding results in the solution space, such as the Sobolev imbedding and the compact imbedding when the potential $V$ satisfies conditions $(V1)$ and $(V3)$. Also they obtained some topological properties for the energy functional corresponding to \eqref{EQ-1}. Motivated by this work, the authors in \cite{SRRZ} discussed the existence of multiple solutions to problem \eqref{EQ-1} with $V\equiv 1$ when nonlinear term $f$ has concave–convex nonlinearities. Very recently, Cen et al. in \cite{KCKS22} discussed the existence of multiple solutions to double phase anisotropic variational problems for the case of a combined effect of concave–convex nonlinearities:
\begin{equation*}
-\text{div}\, {\Phi}(x,\nabla u)+V(x)\left|u\right|^{\alpha(x)-2}u=\lambda a(x)|u|^{r(x)-2}u+f(x,u)\quad \text{in }\R^N,
\end{equation*}
where $\lambda >0$ is a parameter, $r\in C(\R^{N})$ satisfies $1<r(\cdot)<p(\cdot)$, $a$ is an appropriate potential function defined in $(0,\infty)$, and $f:\R^{N}\times \R^{N} \to \R$ is a Carath\'eodory function. Especially the superlinear (convex) term $f$ substantially fulfils a weaker condition as well as Ambrosetti--Rabinowitz condition.

The present paper can be seen as a continuation of the earlier works \cite{Zhang-Radulescu.2018, SRRZ,KCKS22} to the case of problem \eqref{Prob} with critical growth and containing a Kirchhoff term. The critical problem was originally studied in the pioneer paper by Brezis-Nirenberg \cite{Brezis} dealing with Laplace equations. Since then many researchers have been interested in such problems and there have been extensions of \cite{Brezis} in many directions. As we know, one of the difficulties in studying elliptic equations in an unbounded domain involving critical growth is the absence of compactness arising in relation with the variational approach. To overcome this difficulty, the concentration-compactness principles (the CCPs, for short), which were initially provided by Lions \cite{Lions1, Lions3}, and its variant at infinity \cite{BCS,BTW,Chabrowski} have been used. In particular, these principles have played a crucial role in showing the precompactness of a minimizing sequence or a Palais--Smale sequence. By making use of these CCPs or extending them to the suitable solution spaces, many authors have been successful to deal with critical problems involving elliptic equations of various types, see e.g., \cite{Bonder, Bon-Sai-Sil,Fu,HK.ANA2021,HKL2022,HKS,HS.NA2016} and references therein. Regarding the nonlocal Kirchhoff term, it was first introduced by Kirchhoff \cite{Kirchhoff-1876} to study an extension of the classical D'Alembert's wave equation for free vibrations of elastic strings. Elliptic problems of Kirchhoff type have a strong background in several applications in physics and have been intensively studied by many authors in recent years, for example \cite{APS,DAS,CH.2021,PXZ,FMPV} and the references therein.

The main feature of the present paper is to establish the existence and multiplicity of nontrivial nonnegative solutions to double phase anisotropic problem of Schr\"{o}dinger--Kirchhoff type with critical growth. To do this, we first establish a Lions type concentration-compactness principle and its variant at infinity for the solution space of \eqref{Prob}. As mentioned before, as compared with the $p(\cdot)$-Laplacian, the solution space belongs to the variable exponent Orlicz--Sobolev spaces and the general variable exponent elliptic operator $a$ has a different growth near zero and infinity. Hence this problem with nonhomogeneous operator has more complex nonlinearities than those of the previous works \cite{Fu,Fu-Zhang,HKL2022,HS.NA2016,HKS}, so more exquisite analysis has to be meticulously carried out when we obtain the CCPs for the solution space (see Theorems \ref{T.CCP} and \ref{T.CCP.infinity}). As far as we are aware, there were no such results for the variable exponent double phase anisotropic problems in this situation. This is one of novelties of this paper.

As an application of this result, we will be concerned with the existence of a nontrivial nonnegative solution and infinitely many solutions to a class of double phase problems involving critical growth. Concerning this, on a new class of nonlinearities which is a concave-convex type of nonlinearities we get the existence result of a nontrivial nonnegative solution via applying the Ekeland variational principle (see Theorem \ref{Theo.Sub1}). When the symmetry of the reaction term $f$ is additionally assumed, by employing the genus theory we derive that problem \eqref{Prob} admits infinitely solutions (see Theorem \ref{Theo.Sub}). Although the proofs of Theorems \ref{Theo.Sub1} and \ref{Theo.Sub} follow the basic idea of those in \cite{CH.2021}, we believe these consequences are new even in the constant exponent case as well as for the $p(\cdot)$-Laplace equations. This is another novelties of the present paper. To the best of our knowledge, this study is the first effort to develop some existence and multiplicity results to the variable exponent double phase anisotropic problems with critical growth because we observe a new class of nonlinearities which is of a generalized concave-convex type.

Our paper is organized as follows. In Section~\ref{Pre} we review some properties of the variable exponent spaces. In Section~\ref{CCP} we establish the concentration-compactness principles for variable exponent Orlicz--Sobolev space, which plays as the solution space of problem~\eqref{Prob}. The final main section, Section~\ref{Application}, is devoted to the study of the existence and multiplicity of nontrivial solutions to problem~\eqref{Prob}. In the appendix, we provide a proof for a helpful inequality presented in Section~\ref{Pre}.

	\section{Preliminaries and Notations}\label{Pre}
\subsection{Lebesgue--Sobolev spaces with variable exponent} ${}$

\smallskip
In this subsection, we briefly recall the definition and some basic properties of the Lebesgue--Sobolev spaces with variable exponent, which were systematically studied in \cite{Diening}.

Let $\Omega$ be an open domain in $\4{R}^N$. Denote
$$\5{P}_+(\Omega):= \left\{ w:\Omega\to \R:  w \ \text{is Lebesgue measurable and positive a.e. in}\ \Omega\right \}$$
and
$$C_+(\Omega):= \left\{ m\in C(\overline{\Omega}):\ 1<m^-:=\inf_{x\in\Omega}\, m(x)\leq m^+:=\sup_{x\in\Omega}\, m(x)<\infty\right\}.$$
For $p\in C_+(\overline\Omega)$ and a $\sigma$-finite, complete measure $\mu$ in $\Omega,$ define the variable exponent Lebesgue space $L_\mu^{p(\cdot)}(\Omega)$ as
$$
L_\mu^{p(\cdot)}(\Omega) := \left \{ u : \Omega\to\mathbb{R}:\ u \ \hbox{ is}\  \mu-\text{measurable and }\ \int_\Omega |u(x)|^{p(x)} \;\diff\mu < \infty \right \}
$$
endowed with the Luxemburg norm
$$
|u|_{L_\mu^{p(\cdot)}(\Omega)}:=\inf\left\{\lambda >0:
\int_\Omega
\Big|\frac{u(x)}{\lambda}\Big|^{p(x)}\;\diff\mu\le1\right\},\quad \forall u\in L_\mu^{p(\cdot)}(\Omega).
$$
When $\mu$ is the Lebesgue measure, we write  $\diff x$, $L^{p(\cdot) }(\Omega) $  and $|\cdot|_{L^{p(\cdot)}(\Omega)}$  in place of $\diff \mu$, $L_\mu^{p(\cdot)}(\Omega)$ and $|\cdot|_{L_\mu^{p(\cdot)}(\Omega)}$, respectively. For $w\in \5{P}_+(\Omega)$, denote $L^{p(\cdot)}(w,\Omega):=L_\mu^{p(\cdot)}(\Omega)$ with $\diff \mu=w(x)\diff x$. We also denote
$$L_+^{p(\cdot) }(\Omega) :=\left\{  u\in L^{p(\cdot) }(\Omega):\, u \text{ is positive a.e. in } \Omega\right\}$$
and
$$p'(\cdot):=\frac{p(\cdot)}{p(\cdot)-1}.$$
Some basic properties of $L_\mu^{p(\cdot)}(\Omega)$ are listed in the next two propositions.

\begin{proposition} [\cite{Diening}]\label{norm-modular}
	 Define the modular $\rho:\ L_\mu^{p(\cdot)}(\Omega)\to\mathbb{R}$ as
	$$
	\rho(u):=\int_\Omega |u|^{p(x)}\;\diff \mu, \ \  \forall u\in
	L^{p(\cdot)}(\Omega).
	$$
	Then, for all $u\in L_\mu^{p(\cdot)}(\Omega)$  it holds that
	\begin{enumerate}[(i)]
		\item $\rho(u)>1$ $(=1;\ <1)$ if and only if \ $|u|_{L_\mu^{p(\cdot)}(\Omega)}>1$ $(=1;\ <1)$,
		respectively;
		\item if $|u|_{L_\mu^{p(\cdot)}(\Omega)}>1$, then $
		|u|_{L_\mu^{p(\cdot)}(\Omega)}^{p^-}\le \rho(u)\le
		|u|_{L_\mu^{p(\cdot)}(\Omega)}^{p^+}$;
		\item if $|u|_{L_\mu^{p(\cdot)}(\Omega)}<1$, then $
		|u|_{L_\mu^{p(\cdot)}(\Omega)}^{p^+}\le \rho(u)\le
		|u|_{L_\mu^{p(\cdot)}(\Omega)}^{p^-}$.
	\end{enumerate}
Consequently, we have
$$|u|_{L_\mu^{p(\cdot)}(\Omega)}^{p^{-}}-1\leq \rho (u)\leq |u|_{L_\mu^{p(\cdot)}(\Omega)}^{p^{+}}+1.$$
Furthermore, for $\{u_n\}_{n\in\N}\subset L_\mu^{p(\cdot)}(\Omega)$ it holds that
$$\lim_{n\to \infty }|u_n-u|_{L_\mu^{p(\cdot)}(\Omega)}=0\iff\lim_{n\to \infty }\rho (u_n-u)=0.$$
\end{proposition}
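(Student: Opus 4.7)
The plan is to reduce everything to the \emph{unit ball property}: for every $u\ne 0$ in $L_\mu^{p(\cdot)}(\Omega)$, the Luxemburg norm $\lambda^\star:=|u|_{L_\mu^{p(\cdot)}(\Omega)}$ satisfies $\rho(u/\lambda^\star)=1$. Once this is available, the whole proposition drops out by elementary manipulations.

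To establish the unit ball property I would analyze the auxiliary function
\[
\phi(\lambda):=\rho\bigl(u/\lambda\bigr)=\int_\Omega |u(x)|^{p(x)}\,\lambda^{-p(x)}\,\diff\mu,\qquad \lambda>0.
\]
The integrand is strictly decreasing in $\lambda$ and, thanks to $1<p^-\le p^+<\infty$ (which is exactly the definition of $C_+(\overline\Omega)$), one checks by dominated/monotone convergence that $\phi$ is continuous on $(0,\infty)$, with $\phi(\lambda)\to 0$ as $\lambda\to\infty$ and $\phi(\lambda)\to\infty$ as $\lambda\to 0^+$ (for $u\ne 0$). Strict monotonicity plus continuity force the sublevel set $\{\phi\le 1\}$ to be a closed ray $[\lambda^\star,\infty)$ with $\phi(\lambda^\star)=1$. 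Item (i) then falls out at once by comparing $\lambda^\star$ with $1$ and using monotonicity: $\rho(u)=\phi(1)$ is $>1$, $=1$, or $<1$ precisely as $\lambda^\star$ is $>1$, $=1$, or $<1$.

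For (ii) and (iii) the key algebraic observation is that, for $\lambda>1$,
\[
\lambda^{-p^+}\rho(u)\;\le\;\phi(\lambda)\;\le\;\lambda^{-p^-}\rho(u),
\]
with the inequalities reversed when $\lambda<1$. Substituting $\lambda=\lambda^\star$ and using $\phi(\lambda^\star)=1$ yields (ii) when $\lambda^\star>1$ and (iii) when $\lambda^\star<1$. The combined two-sided estimate $|u|_{L_\mu^{p(\cdot)}(\Omega)}^{p^-}-1\le\rho(u)\le|u|_{L_\mu^{p(\cdot)}(\Omega)}^{p^+}+1$ then follows by a two-line case split on whether the norm is $\le 1$ or $>1$ (in the first case the bounds are trivial, in the second they are strengthened forms of (ii)).

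The modular-norm convergence equivalence is an immediate corollary. If $|u_n-u|_{L_\mu^{p(\cdot)}(\Omega)}\to 0$, then eventually the norm is $<1$, so (iii) gives $\rho(u_n-u)\le|u_n-u|_{L_\mu^{p(\cdot)}(\Omega)}^{p^-}\to 0$. Conversely, if $\rho(u_n-u)\to 0$, then eventually $\rho(u_n-u)<1$, so by (i) the norm is $\le 1$, and (iii) now yields $|u_n-u|_{L_\mu^{p(\cdot)}(\Omega)}^{p^+}\le\rho(u_n-u)\to 0$. The only delicate point in the whole argument is the continuity and limiting behavior of $\phi$ at the endpoints of $(0,\infty)$, where one must avoid the temptation to swap limits and integrals without justification; the hypotheses $p^-> 1$ and $p^+<\infty$ are precisely what make those swaps legitimate.
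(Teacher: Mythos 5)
Your proposal is correct: the unit ball property $\rho(u/|u|_{L_\mu^{p(\cdot)}(\Omega)})=1$ for $u\neq 0$, obtained from the continuity and strict monotonicity of $\lambda\mapsto\rho(u/\lambda)$ (legitimate since $p^+<\infty$ makes the modular finite and dominated convergence applicable), does yield (i)--(iii), the two-sided estimate, and the norm--modular convergence equivalence exactly as you describe. The paper offers no proof of this proposition, citing \cite{Diening} instead, and your argument is precisely the standard one given in that reference, so there is nothing to add beyond the minor remark that only $p^+<\infty$ (not $p^->1$) is needed for the limit interchanges.
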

\begin{proposition}[\cite{FZ2001,Ko-Ra1991}] \label{Holder}
	The space $L^{p(\cdot)}(\Omega)$ is a separable and reflexive Banach space. 	Moreover, for any $u\in L^{p(\cdot)}(\Omega)$ and $v\in L^{p'(\cdot)}(\Omega)$ the following H\"older type inequality holds:
	\begin{equation*}
		\int_{\Omega}|uv|\diff x\leq 2|u|_{L^{p(\cdot)}(\Omega )}|v|_{L^{p'(\cdot)}(\Omega )}.
	\end{equation*}
\end{proposition}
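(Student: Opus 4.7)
The plan is to establish the Hölder-type inequality first (since it is the easier half and will be needed for the duality/reflexivity discussion), then address completeness, separability, and finally reflexivity.

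For the inequality, I would first reduce to the normalized case by homogeneity: assume $|u|_{L^{p(\cdot)}(\Omega)} = |v|_{L^{p'(\cdot)}(\Omega)} = 1$, so that by Proposition \ref{norm-modular} the associated modulars equal $1$. Applying the pointwise Young inequality $ab \leq \frac{a^{p(x)}}{p(x)} + \frac{b^{p'(x)}}{p'(x)}$ with $a=|u(x)|$, $b=|v(x)|$ and integrating, and using that $\frac{1}{p(x)}, \frac{1}{p'(x)} \le 1$ pointwise, yields
$$\int_\Omega |uv|\,\diff x \le \int_\Omega |u|^{p(x)}\,\diff x + \int_\Omega |v|^{p'(x)}\,\diff x = 2.$$
Rescaling by $|u|_{L^{p(\cdot)}(\Omega)}$ and $|v|_{L^{p'(\cdot)}(\Omega)}$ gives the stated inequality, where the factor $2$ is a price paid for the non-constancy of $p(\cdot)$ (the classical constant $1$ cannot be recovered because $\frac{1}{p(x)}+\frac{1}{p'(x)}=1$ only pointwise, not after integration).

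For completeness, I would take a norm-Cauchy sequence $\{u_n\}$, pass to a subsequence $\{u_{n_k}\}$ with $|u_{n_{k+1}}-u_{n_k}|_{L^{p(\cdot)}(\Omega)} < 2^{-k}$, and use Proposition \ref{norm-modular} to translate this into summable modular bounds; a Beppo Levi argument produces a pointwise a.e.\ limit $u$ dominated by an $L^{p(\cdot)}$ function, and dominated convergence applied to the modular $\rho(u_n-u)$ together with Proposition \ref{norm-modular} gives norm convergence. For separability, I would exhibit a countable dense set by taking simple functions $\sum_{i=1}^m q_i \chi_{A_i}$ with rational $q_i$ and $A_i$ in a countable generating family (e.g.\ finite unions of rational dyadic cubes), controlling the approximation error through the modular.

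For reflexivity, the preferred route is uniform convexity: one establishes Clarkson-type inequalities in $L^{p(\cdot)}$ valid whenever $1 < p^- \le p^+ < \infty$ and then applies the Milman--Pettis theorem. Alternatively, one proves $(L^{p(\cdot)}(\Omega))^* \cong L^{p'(\cdot)}(\Omega)$ via the pairing $\langle u,v\rangle = \int_\Omega uv\,\diff x$, which immediately yields reflexivity since $(p')'=p$; boundedness of this pairing is given by the Hölder inequality just proved, while surjectivity requires a Radon--Nikodým style argument to represent an arbitrary bounded linear functional. The main obstacle is this reflexivity step: both the variable-exponent Clarkson inequalities and the surjectivity in the duality representation require handling $p(x)$ that cannot be factored out of integrals, and genuinely use the absence of degeneracy $1 < p^- \le p^+ < \infty$. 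In the interest of brevity, I would cite \cite{FZ2001,Ko-Ra1991,Diening} for the technical details rather than reproduce the full arguments here.
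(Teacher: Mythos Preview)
Your proof sketch is correct, and in particular the H\"older inequality argument via normalization and the pointwise Young inequality is the standard one. However, the paper does not actually prove this proposition at all: it is stated as a known result and simply attributed to \cite{FZ2001,Ko-Ra1991}, with no argument given. So your proposal goes well beyond what the paper does, supplying the details that the paper leaves to the cited references; since you ultimately defer the reflexivity argument to the same sources, the net effect is the same, just with more of the proof unpacked for the reader.
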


\medskip
Define the Sobolev space $W^{1,p(\cdot)}(\Omega)$ as
\[W^{1,p(\cdot)}(\Omega):=\left\{u \in L^{p(\cdot)}(\Omega): |\nabla u| \in L^{p(\cdot)}(\Omega)\right\}\]
endowed with the norm:
\[\|u\|_{W^{1,p(\cdot)}(w,\Omega)}:=|u|_{L^{p(\cdot)}(\Omega)}+\big| |\nabla u|\big|_{L^{p(\cdot)}(\Omega)}.\]
We recall the following result obtained by X. Fan (see \cite[Theorem 1.1]{Fan2010}).
\begin{proposition}\label{P.Imb}
	Let $\Omega$ be a (bounded or unbounded) domain in $\4{R}^N$ satisfying the cone uniform condition. Suppose that $p \in C_+(\Omega)$ is Lipschitz in $\overline{\Omega}$ with $p^+<N$. Then there holds a continuous imbedding $W^{1,p(\cdot)}(\Omega) \hookrightarrow L^{q(\cdot)}(\Omega)$, for any $q\in C(\overline{\Omega })$ satisfying condition:
	\[p(x) \leq q(x)\leq p^*(x),\ \ \forall \, x \in \Omega.\]
	Moreover, if $\Omega$ is bounded and $q(x)< p^*(x)$ for all $x \in \overline{\Omega}$, the above imbedding is compact.
\end{proposition}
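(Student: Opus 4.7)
My plan is to reduce the assertion on $\Omega$ to corresponding statements on $\mathbb{R}^N$ via an extension argument, then establish the sharp endpoint embedding $W^{1,p(\cdot)}(\mathbb{R}^N)\hookrightarrow L^{p^*(\cdot)}(\mathbb{R}^N)$, and finally recover the full range of exponents $q$ and the compactness by interpolation. The cone uniform condition on $\Omega$ is the geometric hypothesis that permits a uniform localization-and-reflection construction yielding a bounded linear extension operator $E:W^{1,p(\cdot)}(\Omega)\to W^{1,p(\cdot)}(\mathbb{R}^N)$; the Lipschitz regularity of $p$ is what ensures $E$ is continuous with respect to the Luxemburg norm, since reflection across a Lipschitz boundary composes the exponent with a Lipschitz map and thereby preserves both the modular and the log-H\"older class.

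The core technical step is the Sobolev inequality on $\mathbb{R}^N$, and this is the step I expect to be the main obstacle. Starting from the classical pointwise representation $|u(x)|\leq C\int_{\mathbb{R}^N}|\nabla u(y)|\,|x-y|^{1-N}\,\diff y$, the target embedding reduces to the continuity of the Riesz potential of order one from $L^{p(\cdot)}(\mathbb{R}^N)$ into $L^{p^*(\cdot)}(\mathbb{R}^N)$. Since any Lipschitz exponent is log-H\"older continuous, one can split the kernel into a near piece dominated pointwise by a fractional maximal operator applied to $|\nabla u|$ and a far piece with an integrable tail, and invoke the boundedness of the Hardy-Littlewood maximal operator on $L^{p(\cdot)}(\mathbb{R}^N)$. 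Combining these ingredients with Proposition \ref{norm-modular} yields the Sobolev inequality $|u|_{L^{p^*(\cdot)}(\mathbb{R}^N)}\leq C\,\big||\nabla u|\big|_{L^{p(\cdot)}(\mathbb{R}^N)}$. The delicate point is that, unlike in the constant exponent case, one cannot freely rescale, so the whole argument must track the oscillation of $p(\cdot)$, and this is precisely where the log-H\"older continuity is indispensable.

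With the endpoint embedding and the trivial inclusion $W^{1,p(\cdot)}(\Omega)\hookrightarrow L^{p(\cdot)}(\Omega)$ in hand, for any $q$ satisfying $p(x)\leq q(x)\leq p^*(x)$ I would write $1/q(x)=\theta(x)/p(x)+(1-\theta(x))/p^*(x)$ for some $\theta\in C(\overline{\Omega})$ with values in $[0,1]$, then use the pointwise factorization $|u|^{q(x)}=|u|^{\theta(x)q(x)}\cdot|u|^{(1-\theta(x))q(x)}$ followed by the generalized H\"older inequality of Proposition \ref{Holder} to interpolate the modulars, obtaining the continuous embedding $W^{1,p(\cdot)}(\Omega)\hookrightarrow L^{q(\cdot)}(\Omega)$.

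For the compactness assertion with $\Omega$ bounded and $q(x)<p^*(x)$ throughout $\overline{\Omega}$, compactness of $\overline{\Omega}$ produces a uniform gap $\delta:=\inf_{\overline{\Omega}}(p^*-q)>0$. I would first establish the compact embedding $W^{1,p(\cdot)}(\Omega)\hookrightarrow\!\hookrightarrow L^{p(\cdot)}(\Omega)$ via a Fr\'echet-Kolmogorov criterion adapted to the variable exponent setting: a sequence bounded in $W^{1,p(\cdot)}$ has uniformly small translates (controlled by the gradient bound through a mollification estimate) and uniformly small tails (since $\Omega$ is bounded), hence is precompact in $L^{p(\cdot)}(\Omega)$. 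Extracting a subsequence convergent in $L^{p(\cdot)}(\Omega)$ and applying the interpolation of the previous paragraph to $u_n-u$, together with the uniform $L^{p^*(\cdot)}$ bound and the strict positivity of the exponent $\theta$ guaranteed by the gap $\delta$, then upgrades the convergence to $L^{q(\cdot)}(\Omega)$, completing the proof.
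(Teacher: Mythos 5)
The paper does not actually prove Proposition~\ref{P.Imb}: it is quoted verbatim from X.~Fan \cite{Fan2010} (Theorem~1.1 there), so your attempt must be measured against the known proofs. Your strategy (extend to $\mathbb{R}^N$, prove the endpoint inequality via the Riesz potential and the maximal operator, then interpolate) is genuinely different from Fan's, and it has two real gaps. First, the reduction to $\mathbb{R}^N$ presupposes a bounded extension operator $E:W^{1,p(\cdot)}(\Omega)\to W^{1,p(\cdot)}(\mathbb{R}^N)$ under nothing more than the uniform cone condition. For variable exponents this is itself a substantial theorem (one must simultaneously extend $p$, control the composed exponent under the bi-Lipschitz charts, and sum the local pieces uniformly on an unbounded boundary), and it is precisely what Fan's argument is designed to avoid: his proof works directly on $\Omega$, covering it by congruent cones of a fixed small size, applying constant-exponent Sobolev embeddings with the local infimum $p_B^-$, and using the Lipschitz continuity of $p$ to absorb the $O(r)$ discrepancy between $q$ and the local critical exponent $(p_B^-)^*$ before summing the modulars over a cover of uniformly finite multiplicity. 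Dismissing the extension step as ``localization and reflection'' is not a proof.

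The second gap is the decisive one. Your endpoint step invokes the boundedness of the Hardy--Littlewood maximal operator on $L^{p(\cdot)}(\mathbb{R}^N)$ (equivalently, the estimate $|I_1 f|_{L^{p^*(\cdot)}(\mathbb{R}^N)}\leq C|f|_{L^{p(\cdot)}(\mathbb{R}^N)}$). On all of $\mathbb{R}^N$ this requires, in addition to local log-H\"older continuity, a log-H\"older type decay condition at infinity (or a substitute such as Nekvinda's condition). Lipschitz continuity of $p$ on an unbounded domain gives no control whatsoever at infinity: there are Lipschitz exponents with $1<p^-\leq p^+<N$, oscillating between two values on far-away regions, for which the maximal operator is unbounded on $L^{p(\cdot)}(\mathbb{R}^N)$. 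Hence the inequality at the core of your argument is simply not available under the hypotheses of the proposition when $\Omega$ is unbounded --- which is exactly the case the paper needs, since the solution space lives on $\mathbb{R}^N$. The proposition is nonetheless true, because the local patching proof never uses any global operator bound; your route would only be salvageable for bounded $\Omega$, where the extended exponent can be taken eventually constant. A minor further point: in the compactness part, translations are not bounded on $L^{p(\cdot)}$, so the Fr\'echet--Kolmogorov argument should be run in $L^{p^-}(\Omega)$ (using $L^{p(\cdot)}(\Omega)\hookrightarrow L^{p^-}(\Omega)$ on the bounded domain) before upgrading to $L^{q(\cdot)}(\Omega)$ by the interpolation with the uniform gap $\inf_{\overline\Omega}(p^*-q)>0$; that upgrade itself is fine.
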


\medskip
\subsection{Variable exponent Orlicz--Sobolev spaces}\label{Orlicz space} ${}$

\smallskip
In this subsection we review the definition and properties of the variable exponent Orlicz--Sobolev space introduced in \cite{Zhang-Radulescu.2018}, which is the solution space for problem ~\eqref{Prob} and the  construction of energy functional.

Throughout this subsection, we assume $(A4)$, $(V1)$ and $(P1)$. Let $\Omega$ be an open domain in $\4{R}^N$. We define the following linear space:
\begin{align*}\label{Sum.Lp,q}
	L^{p(\cdot)}(\Omega)+L^{q(\cdot)}(\Omega):=\{u:u=v+w,v \in L^{p(\cdot)}(\Omega),w\in L^{q(\cdot)}(\Omega)\},
\end{align*}
which is endowed with the norm:
\begin{equation*}\label{def.norm.Lp+Lq}
	|u|_{L^{p(\cdot)}(\Omega)+L^{q(\cdot)}(\Omega)}:=\inf\left\{|v|_{L^{p(\cdot)}(\Omega)}+|w|_{L^{q(\cdot)}(\Omega)}: \ u=v+w,\ v \in L^{p(\cdot)}(\Omega),w\in L^{q(\cdot)}(\Omega)\right\}.
\end{equation*}



\begin{proposition}[\cite{Zhang-Radulescu.2018}]
	The space $\left(L^{p(\cdot)}(\Omega)+L^{q(\cdot)}(\Omega),|\cdot|_{L^{p(\cdot)}(\Omega)+L^{q(\cdot)}(\Omega)}\right)$ is a reflexive Banach space.
\end{proposition}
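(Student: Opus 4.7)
The plan is to realize $L^{p(\cdot)}(\Omega)+L^{q(\cdot)}(\Omega)$ as a quotient of a product of two reflexive Banach spaces by a closed subspace, and then invoke the fact that both completeness and reflexivity pass to such quotients.

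First I would check that $|\cdot|_{L^{p(\cdot)}(\Omega)+L^{q(\cdot)}(\Omega)}$ is indeed a norm: subadditivity and absolute homogeneity follow directly from the infimum structure together with the norm properties in $L^{p(\cdot)}(\Omega)$ and $L^{q(\cdot)}(\Omega)$, while positive-definiteness requires a short argument showing that if $u=v+w$ can be approximated by decompositions of arbitrarily small norm sum, then $u=0$ almost everywhere (one uses that convergence in $L^{p(\cdot)}$ or $L^{q(\cdot)}$ implies a.e. convergence along a subsequence).

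Next, consider the product space $X:=L^{p(\cdot)}(\Omega)\times L^{q(\cdot)}(\Omega)$ equipped with the norm $\|(v,w)\|_X:=|v|_{L^{p(\cdot)}(\Omega)}+|w|_{L^{q(\cdot)}(\Omega)}$, and the bounded linear surjection $T:X\to L^{p(\cdot)}(\Omega)+L^{q(\cdot)}(\Omega)$ given by $T(v,w)=v+w$. By the very definition of the infimum norm, the induced map on $X/\ker T$ is an isometric isomorphism onto $L^{p(\cdot)}(\Omega)+L^{q(\cdot)}(\Omega)$. The kernel is $\ker T=\{(v,-v):v\in L^{p(\cdot)}(\Omega)\cap L^{q(\cdot)}(\Omega)\}$, and I would verify it is closed in $X$ by the standard subsequence a.e.-limit argument: if $(v_n,-v_n)\to (v,w)$ in $X$, pass to a common subsequence converging a.e.\ to see $w=-v$ and $v\in L^{p(\cdot)}(\Omega)\cap L^{q(\cdot)}(\Omega)$.

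Finally, since $L^{p(\cdot)}(\Omega)$ and $L^{q(\cdot)}(\Omega)$ are reflexive Banach spaces (Proposition~\ref{Holder}), the product $X$ is a reflexive Banach space, and the quotient of a reflexive Banach space by a closed subspace is again a reflexive Banach space. Transporting this structure along the isometric isomorphism yields that $L^{p(\cdot)}(\Omega)+L^{q(\cdot)}(\Omega)$ is a reflexive Banach space. The only technical point that needs genuine care is the closedness of $\ker T$, since $L^{p(\cdot)}(\Omega)\cap L^{q(\cdot)}(\Omega)$ carries two a priori different norms; the a.e.-limit argument along a common subsequence is what reconciles them and is the main (mild) obstacle.
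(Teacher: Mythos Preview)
Your argument is correct and is the standard way to prove this fact. Note, however, that the paper does not actually prove this proposition: it is stated with a citation to \cite{Zhang-Radulescu.2018} and no proof is given in the present paper. So there is no ``paper's own proof'' to compare against here; your quotient-of-a-reflexive-product argument is a perfectly good self-contained proof of the cited result.

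One small remark: the closedness of $\ker T$ does not even require the a.e.-subsequence trick you flag as the main obstacle. The map $S:X\to L^1_{\mathrm{loc}}(\Omega)$ (or into the space of measurable functions with the topology of local convergence in measure) given by $S(v,w)=v+w$ is continuous, and $\ker T=S^{-1}(\{0\})$ is the preimage of a closed set. Alternatively, since $X$ is Banach and $T$ is bounded and surjective onto a normed space, completeness of the target together with the open mapping theorem already forces $\ker T$ to be closed once you know the quotient norm equals the target norm. Either way, the point you identify as the ``main (mild) obstacle'' is indeed mild.
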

We define the variable exponent Orlicz--Sobolev space $\W(V,\Omega)$ as
$$\W(V,\Omega):=\left\{u \in L^{\alpha(\cdot)}(V,\Omega): |\nabla u| \in L^{p(\cdot)}(\Omega)+L^{q(\cdot)}(\Omega) \right\},$$
which is equipped with the norm:
\[\|u\|_{\W(V,\Omega)}:=|u|_{L^{\alpha(\cdot)}(V,\Omega)}+\big||\nabla u|\big|_{L^{p(\cdot)}(\Omega)+L^{q(\cdot)}(\Omega)}.\]
We have the following (see \cite[Proposition 3.11 and Theorem 3.14 (ii)]{Zhang-Radulescu.2018}).
\begin{proposition}\label{W}
	$(\W(V,\Omega),\|\cdot\|_{\W(V,\Omega)})$ is a reflexive Banach space. Furthermore, if $\Omega$ is a bounded Lipschitz domain, then $$\W(V,\Omega) \hookrightarrow\hookrightarrow L^{r(\cdot)}(\Omega)$$ for any $r\in C(\overline{\Omega })$ satisfying $1\leq r(x)< p^*(x)$ for all $x \in \overline{\Omega}$.
\end{proposition}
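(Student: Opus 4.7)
The plan is to realize $\W(V,\Omega)$ as a closed subspace of a reflexive product space. I would take
$X := L^{\alpha(\cdot)}(V,\Omega) \times \bigl[L^{p(\cdot)}(\Omega) + L^{q(\cdot)}(\Omega)\bigr]^N$
equipped with the natural additive norm: each factor is reflexive (the first by standard variable-exponent Lebesgue theory applied to the measure $V\diff x$, the second by the preceding proposition), hence $X$ is reflexive. The map $u\mapsto(u,\partial_1 u,\dots,\partial_N u)$ is a linear isometry from $\W(V,\Omega)$ into $X$ by the very definition of $\|\cdot\|_{\W(V,\Omega)}$, so it will remain to check that its image is closed in $X$. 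Given $(u_n,\nabla u_n)\to(u,\xi)$ in $X$, convergence in $L^{\alpha(\cdot)}(V,\Omega)$ combined with $V\geq V_0>0$ would yield $u_n\to u$ in $L^1_{\mathrm{loc}}(\Omega)$; similarly, writing $\nabla u_n-\xi=v_n+w_n$ with $v_n\in L^{p(\cdot)}$ and $w_n\in L^{q(\cdot)}$, convergence in $L^{p(\cdot)}(\Omega)+L^{q(\cdot)}(\Omega)$ would yield $\nabla u_n\to\xi$ in $L^1_{\mathrm{loc}}(\Omega)$. Testing against $\varphi\in C_c^\infty(\Omega)$ and integrating by parts identifies $\xi=\nabla u$ distributionally. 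Since closed subspaces of reflexive Banach spaces are themselves reflexive, this establishes the first assertion.

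For the compact embedding I would assume $\Omega$ bounded and Lipschitz and aim to reduce to Proposition~\ref{P.Imb} for $W^{1,p(\cdot)}(\Omega)$. Since $p(\cdot)\ll q(\cdot)$ and $|\Omega|<\infty$, the H\"older inequality in Proposition~\ref{Holder} gives $L^{q(\cdot)}(\Omega)\hookrightarrow L^{p(\cdot)}(\Omega)$, whence the algebraic and topological identification $L^{p(\cdot)}(\Omega)+L^{q(\cdot)}(\Omega)=L^{p(\cdot)}(\Omega)$ with equivalent norms. Combined with $|u|_{L^{\alpha(\cdot)}(\Omega)}\leq C\,|u|_{L^{\alpha(\cdot)}(V,\Omega)}$ (a direct consequence of $V\geq V_0>0$ and Proposition~\ref{norm-modular}), the norm on $\W(V,\Omega)$ is equivalent, on bounded Lipschitz $\Omega$, to $|u|_{L^{\alpha(\cdot)}(\Omega)}+\||\nabla u|\|_{L^{p(\cdot)}(\Omega)}$.

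The main obstacle will be to promote this to a continuous inclusion $\W(V,\Omega)\hookrightarrow W^{1,p(\cdot)}(\Omega)$, i.e.\ a uniform estimate $|u|_{L^{p(\cdot)}(\Omega)}\leq C\|u\|_{\W(V,\Omega)}$; this is delicate because $(P1)$ does not force $\alpha\geq p$ pointwise, so one cannot simply use $L^{\alpha(\cdot)}(\Omega)\hookrightarrow L^{p(\cdot)}(\Omega)$. My plan is a Sobolev–Poincar\'e bootstrap on the bounded Lipschitz domain: starting from $u\in L^1(\Omega)$ (thanks to $\alpha\gg 1$) and $\nabla u\in L^{p^-}(\Omega)$, I would obtain $u\in W^{1,s_0}(\Omega)\hookrightarrow L^{s_0^\ast}(\Omega)$ with $s_0:=\min\{\max(1,\alpha^-),\,p^-\}$ and all estimates linear in $\|u\|_{\W(V,\Omega)}$; iterating finitely many times, using $p^+<N$ and keeping the $L^{p(\cdot)}$-integrability of the gradient throughout, would raise the integrability of $u$ past $p^+$, delivering $u\in L^{p(\cdot)}(\Omega)$ with the required bound. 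Once $\W(V,\Omega)\hookrightarrow W^{1,p(\cdot)}(\Omega)$ is in place, Proposition~\ref{P.Imb} supplies the compact embedding $W^{1,p(\cdot)}(\Omega)\hookrightarrow\hookrightarrow L^{r(\cdot)}(\Omega)$ for every $r\in C(\overline{\Omega})$ with $1\leq r(x)<p^\ast(x)$, and composition finishes the proof.
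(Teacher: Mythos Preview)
The paper does not give its own proof of this proposition; it simply cites \cite[Proposition~3.11 and Theorem~3.14\,(ii)]{Zhang-Radulescu.2018}. So there is no ``paper's approach'' to compare against, and your proposal has to be judged on its own. Your reflexivity argument (isometric embedding into a reflexive product and closedness of the graph via $L^1_{\loc}$ convergence) is standard and correct.

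For the compact embedding, the reduction $L^{p(\cdot)}(\Omega)+L^{q(\cdot)}(\Omega)=L^{p(\cdot)}(\Omega)$ on a bounded $\Omega$ is fine, and the final appeal to Proposition~\ref{P.Imb} is the right endpoint. The gap is in your bootstrap. Iterating with \emph{constant} exponents $s_{k+1}=\min\{s_k^{\ast},p^-\}$ can never push the integrability of $u$ beyond $L^{(p^-)^\ast}(\Omega)$, because once $s_k=p^-$ the iteration stabilises. Nothing in $(A4)$ forces $(p^-)^\ast\ge p^+$: for instance with $N=10$ and $p$ ranging between $2$ and $5$ one has $(p^-)^\ast=\tfrac{20}{8}=2.5<5=p^+$, so you would not reach $u\in L^{p(\cdot)}(\Omega)$ and hence could not invoke Proposition~\ref{P.Imb}.

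The fix is to use the variable-exponent information on the gradient directly, in a single step rather than by iteration. On a bounded Lipschitz domain with $p\in C^{0,1}_+(\overline\Omega)$ and $p^+<N$, the Sobolev--Poincar\'e inequality in variable exponent spaces (see e.g.\ \cite[Chapter~8]{Diening}) gives
\[
\big|u-\overline u\big|_{L^{p^\ast(\cdot)}(\Omega)}\le C\,\big||\nabla u|\big|_{L^{p(\cdot)}(\Omega)},\qquad \overline u:=\frac{1}{|\Omega|}\int_\Omega u\,\diff x,
\]
for every $u\in W^{1,1}(\Omega)$ with $|\nabla u|\in L^{p(\cdot)}(\Omega)$. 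Since $\overline u$ is a constant controlled by $|u|_{L^{\alpha(\cdot)}(V,\Omega)}$ (via $V\ge V_0$ and H\"older), this yields $|u|_{L^{p(\cdot)}(\Omega)}\le C\|u\|_{\W(V,\Omega)}$ in one shot, and your concluding composition with Proposition~\ref{P.Imb} then goes through.
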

Hereafter, we simply denote $\left(\W(V,\R^N),\|\cdot\|_{\W(V,\R^N)}\right)$ as $\left(X,\|\cdot\|\right)$. We have the  following imbedding results on $X$ (see \cite[Theorem 3.14]{Zhang-Radulescu.2018}).
\begin{proposition}\label{X-Imbeddings}
	The following conclusions hold:
	\begin{itemize}
		\item[$(i)$] $X \hookrightarrow L^{r(\cdot)}(\R^N)$
		for any $r \in C(\R^N)$ satisfying $\alpha (\cdot) \leq r(\cdot) \leq p^\ast (\cdot) $;
				\item [$(ii)$]  assume additionally that $(V3)$ holds, then $X \hookrightarrow \hookrightarrow L^{r(\cdot)}(\R^N)$
				for any $r \in C(\R^N)$ satisfying $\alpha (\cdot) \leq r(\cdot) \ll p^\ast (\cdot) $.
	\end{itemize}
\end{proposition}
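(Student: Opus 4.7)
The plan is to prove (i) first and then bootstrap to (ii) using the standard tail--decay argument afforded by $(V3)$.

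For (i), I would isolate the two endpoint imbeddings $X \hookrightarrow L^{\alpha(\cdot)}(\R^N)$ and $X \hookrightarrow L^{p^*(\cdot)}(\R^N)$, and then reach intermediate exponents by a pointwise interpolation. The lower endpoint is immediate from $V \geq V_0 > 0$ in $(V1)$: the pointwise inequality $|u|^{\alpha(x)} \leq V_0^{-1} V(x)|u|^{\alpha(x)}$ combined with Proposition~\ref{norm-modular} yields $|u|_{L^{\alpha(\cdot)}(\R^N)} \leq C \|u\|$. Given the two endpoint bounds, for any $r$ with $\alpha(\cdot) \leq r(\cdot) \leq p^*(\cdot)$ the elementary pointwise split $|u(x)|^{r(x)} \leq |u(x)|^{\alpha(x)} + |u(x)|^{p^*(x)}$ (verified separately on $\{|u|\leq 1\}$ and $\{|u|>1\}$) transfers modular finiteness to $L^{r(\cdot)}(\R^N)$, and Proposition~\ref{norm-modular} converts this into the desired norm estimate.

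The crux of (i) is the upper endpoint $X \hookrightarrow L^{p^*(\cdot)}(\R^N)$, which I expect to be the main obstacle. Since $|\nabla u|$ lies only in the sum space $L^{p(\cdot)}+L^{q(\cdot)}$, the standard global Sobolev inequality for $W^{1,p(\cdot)}(\R^N)$ does not apply directly; moreover the Sobolev image of the ``small gradient'' part is the strictly larger space $L^{q^*(\cdot)}$. To close the estimate, I would decompose $\nabla u = \nabla u\,\chi_{\{|\nabla u|>1\}} + \nabla u\,\chi_{\{|\nabla u|\leq 1\}}$, observing that the first summand belongs to $L^{p(\cdot)}(\R^N)$ and the second to $L^{q(\cdot)}(\R^N)$ with norms controlled by $\|u\|$. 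Then I would combine the $W^{1,p(\cdot)}$-- and $W^{1,q(\cdot)}$--Sobolev inequalities of Proposition~\ref{P.Imb} with a H\"older interpolation against $L^{\alpha(\cdot)}(V,\R^N)$, where the hypothesis $\alpha(\cdot)\leq p^*(\cdot)q'(\cdot)/p'(\cdot)$ is precisely the exponent-balance condition needed to absorb the $L^{q^*(\cdot)}$ piece into $L^{p^*(\cdot)}$. This is the route of \cite[Theorem 3.14]{Zhang-Radulescu.2018}, and I would follow its structure.

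For (ii), let $\{u_n\}\subset X$ be bounded. By reflexivity of $X$ (Proposition~\ref{W}) pass to a subsequence with $u_n \HTY u$ in $X$ and $u_n \to u$ a.e. The local compact imbedding on bounded Lipschitz domains from Proposition~\ref{W} together with a diagonal extraction gives $u_n \to u$ strongly in $L^{r(\cdot)}(B_R)$ for every $R>0$. The tail uses $(V3)$: given $\eta>0$, there exists $R_\eta$ with $V(x)\geq 1/\eta$ for $|x|>R_\eta$, whence $\int_{|x|>R_\eta}|u_n|^{\alpha(x)}\,\diff x \leq \eta\int_{\R^N} V(x)|u_n|^{\alpha(x)}\,\diff x \leq C\eta$. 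Using $r(\cdot)\ll p^*(\cdot)$, fix $\delta>0$ with $p^*(x)-r(x)\geq\delta$, and split the tail into $\{|u_n|\leq 1\}$, $\{1<|u_n|\leq K\}$, $\{|u_n|>K\}$: on the first the integrand is bounded by $|u_n|^{\alpha(x)}$; on the second by $K^{r^+}\chi_{\{|u_n|>1\}}$, whose integral is controlled by $K^{r^+}\int_{|x|>R_\eta}|u_n|^{\alpha(x)}\,\diff x$; on the third by $K^{-\delta}|u_n|^{p^*(x)}$, with $\int|u_n|^{p^*(x)}\,\diff x$ uniformly bounded by part~(i). Choosing $K$ large and then $\eta$ small yields uniform smallness of the tail modular; combining with local strong convergence and Proposition~\ref{norm-modular} completes the proof of $u_n \to u$ in $L^{r(\cdot)}(\R^N)$.
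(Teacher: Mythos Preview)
The paper does not give its own proof of this proposition: it is quoted verbatim from \cite[Theorem~3.14]{Zhang-Radulescu.2018}, and your sketch for part~(i) explicitly follows that same reference, so there is nothing to compare on that front. Your outline for (i) is sound --- the lower endpoint via $(V1)$, the upper endpoint via the gradient splitting and the exponent-balance condition in $(P1)$, and the pointwise interpolation $|u|^{r(x)}\le |u|^{\alpha(x)}+|u|^{p^\ast(x)}$ together with Proposition~\ref{norm-modular} and a scaling argument to pass from modular to norm --- and your argument for (ii) (local compactness on balls from Proposition~\ref{W} plus the three-region tail split exploiting $(V3)$ and $r(\cdot)\ll p^\ast(\cdot)$) is correct and is the standard coercive-potential mechanism behind the cited theorem.
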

We also have the following compact imbedding, which is crucial to get our existence results; its proof can be found in  \cite[Theorem 3.10]{SRRZ}.
\begin{proposition}\label{IV.compact}
	Let $r \in C_+^{0,1}(\mathbb{R}^N)$  and  $\varrho\in L^{\theta(\cdot)}_+(\R^N)$ with $\theta\in C_+(\R^N)$ satisfying $\alpha(\cdot)\leq  \theta'(\cdot)r(\cdot)\leq p^*(\cdot)$ in $\R^N$. Then, it holds that
	\begin{equation*}
		X \hookrightarrow \hookrightarrow L^{r(\cdot) }(\varrho,\mathbb{R}^N ).
	\end{equation*}
\end{proposition}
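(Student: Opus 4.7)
The plan is to show that every bounded sequence $\{u_n\} \subset X$ admits a subsequence converging strongly in $L^{r(\cdot)}(\varrho,\R^N)$. By reflexivity of $X$ (Proposition \ref{W}), I would pass to a subsequence with $u_n \HTY u$ in $X$; then Proposition \ref{W} applied on each $B_k$ together with a diagonal extraction gives, along a further subsequence, both $u_n \to u$ a.e.\ in $\R^N$ and strongly in $L^{s(\cdot)}(B_R)$ for every $R>0$ and every $s \in C(\overline{B_R})$ with $s\ll p^*$. By Proposition \ref{norm-modular}, strong convergence in $L^{r(\cdot)}(\varrho,\R^N)$ is equivalent to
\[
\int_{\R^N} \varrho(x)\,|u_n-u|^{r(x)}\,\diff x \longrightarrow 0,
\]
so the task reduces to proving convergence of this modular.

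The uniform Hölder control (Proposition \ref{Holder}) with the conjugate pair $\theta(\cdot),\theta'(\cdot)$ would read, for any measurable $E\subset \R^N$,
\[
\int_E \varrho\,|u_n-u|^{r(x)}\,\diff x \le 2\,|\varrho|_{L^{\theta(\cdot)}(E)}\,\bigl||u_n-u|^{r(\cdot)}\bigr|_{L^{\theta'(\cdot)}(E)}.
\]
Since $\alpha(\cdot)\le r(\cdot)\theta'(\cdot)\le p^*(\cdot)$, Proposition \ref{X-Imbeddings}(i) yields $X \hookrightarrow L^{r(\cdot)\theta'(\cdot)}(\R^N)$, hence the modular $\int |u_n-u|^{r(x)\theta'(x)}\,\diff x$ is uniformly bounded in $n$. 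Converting back via Proposition \ref{norm-modular} bounds the factor $\bigl||u_n-u|^{r(\cdot)}\bigr|_{L^{\theta'(\cdot)}(E)}$ by a constant $C$ independent of $n$ and $E$.

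I would then split $\R^N = B_R \sqcup (\R^N\setminus B_R)$. For the tail, $\varrho\in L^{\theta(\cdot)}(\R^N)$ implies via dominated convergence of the modular that $|\varrho|_{L^{\theta(\cdot)}(\R^N\setminus B_R)}\to 0$ as $R\to\infty$; picking $R$ large makes the tail contribution less than $\eps/2$ uniformly in $n$. For the interior integral on $B_R$ I would apply Vitali's convergence theorem: pointwise a.e.\ convergence is already in hand, and the Hölder inequality above with arbitrary $E\subset B_R$ combined with absolute continuity of the Luxemburg norm ($|\varrho|_{L^{\theta(\cdot)}(E)}\to 0$ as $|E|\to 0$) gives equi-integrability of $\{\varrho|u_n-u|^{r(\cdot)}\}$. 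Vitali then delivers $\int_{B_R}\varrho|u_n-u|^{r(x)}\,\diff x\to 0$, and combining with the tail bound concludes the proof.

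The delicate step is that the hypothesis only provides $r(\cdot)\theta'(\cdot)\le p^*(\cdot)$ with possible equality, so one cannot directly invoke the local compact embedding of Proposition \ref{W} at the exponent $r\theta'$ on the critical set. The main obstacle is therefore handling this borderline case, and the route I propose circumvents it by letting the $L^{\theta(\cdot)}$-integrability of the weight $\varrho$ (rather than a compact embedding at the critical exponent) produce the uniform integrability that Vitali requires; the compact embedding is then invoked only at a strictly subcritical exponent, just enough to upgrade weak convergence to a.e.\ convergence on every bounded set.
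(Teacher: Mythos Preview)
Your argument is correct. Note that the paper does not supply its own proof of this proposition; it simply refers to \cite[Theorem~3.10]{SRRZ}, so there is no in-paper proof to compare against. The route you take---H\"older with the conjugate pair $(\theta,\theta')$ to isolate the weight, a tail estimate coming from $\varrho\in L^{\theta(\cdot)}(\R^N)$, and Vitali on $B_R$ with equi-integrability supplied by absolute continuity of $|\varrho|_{L^{\theta(\cdot)}(E)}$---is the natural one for weighted compact embeddings of this type and correctly handles the borderline possibility $r(\cdot)\theta'(\cdot)=p^\ast(\cdot)$ that you flag.
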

The next result is from \cite[Corollary 3.13 (iii)]{Zhang-Radulescu.2018} that shows the density of smooth functions with compact support in $X$.
\begin{proposition}\label{Density}
The space $C_c^\infty(\R^N)$ is dense in $X$.
\end{proposition}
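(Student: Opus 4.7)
The plan is the classical truncation-then-mollification argument, adapted to the variable-exponent Orlicz--Sobolev setting.

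\textbf{Step 1 (truncation to compact support).} Fix a cutoff $\eta\in C_c^\infty(\R^N)$ with $\eta\equiv 1$ on $B_1$, $\supp\eta\subset B_2$, $0\le\eta\le1$, and set $\eta_R(x):=\eta(x/R)$, $u_R:=\eta_R u$. I would show $u_R\to u$ in $X$ as $R\to\infty$. The $L^{\alpha(\cdot)}(V,\R^N)$-part is immediate: $|u-u_R|^{\alpha(\cdot)}V\le |u|^{\alpha(\cdot)}V\in L^1(\R^N)$ with pointwise convergence to zero, so dominated convergence on the modular plus Proposition~\ref{norm-modular} yields the claim. For the gradient, decompose $\nabla u=\phi+\psi$ with $\phi\in L^{p(\cdot)}(\R^N)$, $\psi\in L^{q(\cdot)}(\R^N)$, and write $\nabla u-\nabla u_R=(1-\eta_R)\phi+(1-\eta_R)\psi-u\nabla\eta_R$. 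The first two summands vanish in $L^{p(\cdot)}(\R^N)+L^{q(\cdot)}(\R^N)$ by the same dominated-convergence principle. The cross term $u\nabla\eta_R$ is the delicate one: $|\nabla\eta_R|\lesssim 1/R$ is supported on the annulus $A_R=B_{2R}\setminus B_R$, and splitting $u\nabla\eta_R$ by magnitude together with the embedding $X\hookrightarrow L^{p^*(\cdot)}(\R^N)$ from Proposition~\ref{X-Imbeddings}(i) lets one bound each piece in $L^{p(\cdot)}(A_R)$ or $L^{q(\cdot)}(A_R)$ with explicit decay in $R$.

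\textbf{Step 2 (mollification).} Given $u\in X$ with $\supp u\subset\overline{B_r}$, first truncate pointwise by $u^{(n)}:=\max(-n,\min(n,u))$; one checks $u^{(n)}\to u$ in $X$ via dominated convergence on each modular, since $\nabla u^{(n)}$ equals $\nabla u$ on $\{|u|\le n\}$ and zero otherwise. Next, let $\rho_\e$ be a standard mollifier supported in $B_\e$. Since $p,q\in C^{0,1}_+(\R^N)$ are log-H\"older continuous, convolution is bounded on $L^{p(\cdot)}(B_{r+1})$ and $L^{q(\cdot)}(B_{r+1})$ with strong convergence $\rho_\e\ast\nabla u^{(n)}\to\nabla u^{(n)}$ there, hence $\nabla(\rho_\e\ast u^{(n)})\to\nabla u^{(n)}$ in $L^{p(\cdot)}(\R^N)+L^{q(\cdot)}(\R^N)$. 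Because $u^{(n)}$ is bounded with compact support, $\rho_\e\ast u^{(n)}\to u^{(n)}$ uniformly, which together with $V\in L^1(B_{r+1})$ gives $L^{\alpha(\cdot)}(V,\R^N)$-convergence. Combining with Step~1 via a diagonal extraction delivers $C_c^\infty(\R^N)$ approximants converging to the original $u$ in $X$.

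\textbf{Main obstacle.} The key technical point is the cross-term $u\nabla\eta_R$ in Step~1, which has no direct H\"older inequality available in the sum space $L^{p(\cdot)}(\R^N)+L^{q(\cdot)}(\R^N)$. The resolution is to split $u\nabla\eta_R$ by magnitude and use the Sobolev-type embedding $X\hookrightarrow L^{p^*(\cdot)}(\R^N)$ to exploit higher integrability of $u$, together with the decay factor $1/R$ from $\nabla\eta_R$. The mollification step is otherwise fairly standard thanks to the Lipschitz regularity of the exponents.
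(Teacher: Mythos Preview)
The paper does not actually prove this proposition; it simply cites \cite[Corollary~3.13~(iii)]{Zhang-Radulescu.2018}. Your proposal, by contrast, supplies a direct truncation-then-mollification argument, which is the standard route (and presumably close to what the cited reference does). The outline is sound: Step~1 handles the tail via a cutoff, the cross-term $u\nabla\eta_R$ is dealt with exactly as the paper itself does later (see the estimate around \eqref{P_CCP.est5'''} and its analogue \eqref{P_CCP2.est5}) using $u\in L^{p^\ast(\cdot)}(\R^N)$; Step~2 reduces to mollification on a bounded set, where the Lipschitz regularity of $p,q$ gives log-H\"older continuity and hence strong convergence of mollifiers in $L^{p(\cdot)}$ and $L^{q(\cdot)}$.

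One imprecision to fix: in Step~2 you claim $\rho_\e\ast u^{(n)}\to u^{(n)}$ \emph{uniformly}, but $u^{(n)}$ is merely bounded and measurable, not uniformly continuous. What you actually have is $\rho_\e\ast u^{(n)}\to u^{(n)}$ a.e.\ together with the uniform bound $|\rho_\e\ast u^{(n)}|\le n$; then dominated convergence on the modular (with dominant $(2n)^{\alpha^+}V\chi_{B_{r+1}}\in L^1$ by $(V1)$) gives the $L^{\alpha(\cdot)}(V,\R^N)$-convergence. With this correction the argument goes through.
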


Finally, we present a useful estimate regarding the main operator in \eqref{Prob}, which is proved in Appendix and will be frequently employed in the next sections. Define
\begin{equation}\label{Def.A}
	\mathcal{A}(u):=\int_{\R^N}A(x,\nabla u)\diff x+\int_{\R^N}\frac{V(x)}{\alpha(x)}|u|^{\alpha(x)}\diff x, \ u\in X
\end{equation}
and
\begin{equation}\label{Def.p_alpha}
	p_\alpha(x):=\min\{p(x),\alpha(x)\},\ q_\alpha(x):=\max\{q(x),\alpha(x)\},\ x\in \R^N.
\end{equation}
Then, there exist positive constants $\alpha_1, \alpha_2$ such that
\begin{equation}\label{Est.A}
	\alpha_1\min\left\{\|u\|^{p_\alpha^-},\|u\|^{q_\alpha^+}\right\}\leq \mathcal{A}(u)\leq \alpha_2\max\left\{\|u\|^{p_\alpha^-},\|u\|^{q_\alpha^+}\right\},\ \ \forall u\in X.
\end{equation}

\section{The concentration-compactness principles}\label{CCP}

Let $C_c(\mathbb{R}^N)$ be the
set of all continuous functions $u : \mathbb{R}^N \to\mathbb{R}$ whose support is compact, and let $C_0(\mathbb{R}^N)$ be the completion of $C_c(\mathbb{R}^N)$ relative to the supremum norm $|\cdot|_{L^\infty(\R^N)}.$ Let $\mathcal{M}(\R^N)$ be the space of all signed finite Radon measures on $\mathbb{R}^N$ with the total variation norm. We may identify $\mathcal{M}(\R^N)$ with the dual of $C_0(\mathbb{R}^N)$ via the Riesz representation theorem, that is, for each $\mu\in \left[C_0(\mathbb{R}^N)\right]^\ast$ there is a unique element in $\mathcal{M}(\R^N)$, still denoted by $\mu$, such that
$$\langle \mu,f\rangle=\int_{\mathbb{R}^N}f\diff\mu,\quad \forall f\in C_0(\mathbb{R}^N)$$
(see, e.g., \cite[Section 1.3.3]{Fonseca}). We identify $L^1(\mathbb{R}^N)$ with a subspace of $\mathcal{M}(\R^N)$ through the imbedding
$T:\ L^1(\mathbb{R}^N)\to \left[C_0(\mathbb{R}^N)\right]^\ast$  defined by
$$\langle Tu,f\rangle=\int_{\mathbb{R}^N}uf\diff x, \ \ \forall u\in L^1(\mathbb{R}^N), \ \forall f\in C_0(\mathbb{R}^N).$$

Let $p,q,V,\alpha$ verify $(A4)$, $(V1)$, $(P1)$ and let $b\in L^\infty_+(\Bbb R^{N}),t\in C(\R^N)$ satisfy $\alpha(\cdot)\leq t(\cdot)\leq p^\ast(\cdot)$. Then, it holds $X\hookrightarrow L^{t(\cdot)}(b,\R^N)$ in view of Proposition \ref{X-Imbeddings} (i). Thus,
\begin{equation}\label{Sb}
	S_b:=\inf_{u\in X\setminus\{0\} }\frac{\|u\|}{| u|_{L^{t(\cdot)}(b,\R^N)}}\in (0,\infty).
\end{equation}
We now state the main result of this section, that is, a concentration-compactness principle for the variable exponent Orlicz--Sobolev space $X$.
\begin{theorem}\label{T.CCP}
	Let $(A1)$--$(A6)$, $(V1)$, $(V2)$ and $(P1)$ hold. Let $\{u_n\}_{n\in\mathbb{N}}$ be a bounded sequence in
	$X$  such that
	\begin{eqnarray}
	\label{CCP.w-conv}	u_n &\rightharpoonup& u \quad \text{in}\quad  X,\\
		\label{CCP.mu}	A(x,\nabla u_n)+V(x)|u_n|^{\alpha(x)} &\overset{\ast }{\rightharpoonup }&\mu\quad \text{in}\quad \mathcal{M}(\R^N),\\
	\label{CCP.nu}	b|u_n|^{t(x)}&\overset{\ast }{\rightharpoonup }&\nu\quad \text{in}\quad \mathcal{M}(\R^N).
	\end{eqnarray}
	Then, there exist $\{x_i\}_{i\in \mathcal{I}}\subset \mathcal{C}$ of distinct points and $\{\nu_i\}_{i\in \mathcal{I}}, \{\mu_i\}_{i\in \mathcal{I}}\subset (0,\infty),$ where $\mathcal{I}$ is at most countable and $\mathcal{C}$ is given in \eqref{critical_C}, such that
	\begin{gather}
		\nu=b|u|^{t(x)} + \sum_{i\in \mathcal{I}}\nu_i\delta_{x_i},\label{CCP.form.nu}\\
		\mu \geq A(x,\nabla u) + V(x)|u|^{\alpha(x)}+\sum_{i\in \mathcal{I}} \mu_i \delta_{x_i},\label{CCP.form.mu}\\
		S_b \nu_i^{\frac{1}{t(x_i)}} \leq 2\max\left\{ M_1^{-\frac{1}{p(x_i)}},M_1^{-\frac{1}{q(x_i)}}\right\} \max\left\{\mu_i^{\frac{1}{p(x_i)}},\mu_i^{\frac{1}{q(x_i)}}\right\}, \quad \forall i\in \mathcal{I},\label{CCP.nu_mu}
	\end{gather}
where $S_b$ and $M_1$ are given in \eqref{Sb} and \eqref{Est1}, respectively.
\end{theorem}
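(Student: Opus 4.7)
The plan is to adapt Lions' classical CCP scheme to the variable exponent double-phase setting of $X$: (i) show that $\mu-A(x,\nabla u)-V(x)|u|^{\alpha(x)}$ and $\tilde\nu:=\nu-b|u|^{t(x)}$ are nonnegative Radon measures; (ii) derive a localized reverse Hölder inequality relating $\tilde\mu$ and $\tilde\nu$ against bump functions; (iii) conclude that $\tilde\nu$ is purely atomic and supported in $\mathcal{C}$; and (iv) extract the quantitative bound \eqref{CCP.nu_mu} by shrinking the bump to each atom. For (i), a.e. convergence $u_n\to u$ from the local compact embedding (Proposition~\ref{W}, applicable under $(V2)$) together with Fatou yields $\nu\ge b|u|^{t(x)}$, while lower semicontinuity of the convex modular under $(A2)$ delivers $\mu\ge A(x,\nabla u)+V(x)|u|^{\alpha(x)}$.

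For (ii), the core estimate comes from applying $\|w\|_{L^{t(\cdot)}(b,\R^N)}\le S_b^{-1}\|w\|$ to $w=\phi v_n$ with $v_n:=u_n-u$ and $\phi\in C_c^\infty(\R^N)$. Local strong convergence $v_n\to 0$ in $L^{r(\cdot)}_{\loc}$ for $r\ll p^*$ (Proposition~\ref{W}) kills both the $L^{\alpha(\cdot)}(V)$ component of $\|\phi v_n\|$ and the error $v_n\nabla\phi$ in the gradient piece. The Brezis-Lieb splitting $b|u_n|^{t(x)}-b|v_n|^{t(x)}\to b|u|^{t(x)}$ in $L^1_{\loc}$, a pointwise control of $A(x,\phi\nabla v_n)$ by $\max\{|\phi|^{p(x)},|\phi|^{q(x)}\}A(x,\nabla v_n)$ (obtained by splitting into the four regimes determined by whether $|\phi|$ and $|\nabla v_n|$ are less than or greater than $1$, and applying \eqref{Est1} in each), and a companion splitting for the $A+V|\cdot|^{\alpha(\cdot)}$ measure identify the limits and allow the inequality to pass to $n\to\infty$. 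Invoking Proposition~\ref{norm-modular} then yields a reverse Hölder type inequality of the form
\[
S_b\Bigl(\int|\phi|^{t(x)}b\,\diff\tilde\nu\Bigr)^{1/T_\phi}\le C\Bigl(\int\max\{|\phi|^{p(x)},|\phi|^{q(x)}\}\diff\tilde\mu\Bigr)^{1/P_\phi},
\]
with $T_\phi,P_\phi\in\{t^-,t^+,p^-,p^+,q^-,q^+\}$ selected by the standard norm-modular dichotomy.

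Step (iii) then follows by the standard Lions argument (see \cite{Lions1,Fu,Bonder}): the reverse Hölder inequality forces $\tilde\nu=\sum_{i\in\mathcal{I}}\nu_i\delta_{x_i}$ with $\mathcal{I}$ at most countable, and each atom lies in $\mathcal{C}$, since otherwise $t(x_i)<p^*(x_i)$ and Proposition~\ref{IV.compact} produces a compact embedding $X\hookrightarrow L^{t(\cdot)}(b,B_\rho(x_i))$ on a small ball, forcing $\nu_i=0$. For (iv), choosing $\phi=\phi_{i,\rho}$ a standard cutoff with $\phi_{i,\rho}(x_i)=1$ and $\supp\phi_{i,\rho}\subset B_\rho(x_i)$ and letting $\rho\to 0^+$, continuity of the exponents sends $p,q,t$ at every point of $\supp\phi_{i,\rho}$ to their values at $x_i$, while Proposition~\ref{norm-modular} produces the $\max\{\mu_i^{1/p(x_i)},\mu_i^{1/q(x_i)}\}$ and $\max\{M_1^{-1/p(x_i)},M_1^{-1/q(x_i)}\}$ structure appearing in \eqref{CCP.nu_mu}.

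The principal obstacle is the non-homogeneity of $A$: because $A(x,\xi)\sim|\xi|^{q(x)}$ for $|\xi|\ll 1$ and $\sim|\xi|^{p(x)}$ for $|\xi|\gg 1$, the measure $\tilde\mu$ is not a weak-$\ast$ limit of a single homogeneous modular, and concentration at $x_i$ may draw from either or both regimes. This is precisely what forces the max-structure on both sides of \eqref{CCP.nu_mu}, requires simultaneous tracking of both branches of \eqref{Est1} throughout the limit $\rho\to 0^+$, and makes the Brezis-Lieb splitting of the $A$ measure more delicate than in the homogeneous case; the strict convexity $(A2)$ and the uniform convexity $(A6)$ play an essential role in controlling the resulting residual terms.
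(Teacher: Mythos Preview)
Your overall architecture mirrors the paper's: subtract the limit, apply the Sobolev inequality to $\phi v_n$, identify atoms via a reverse H\"older relation, and then localize with shrinking bumps. The decomposition of the $L^{p(\cdot)}+L^{q(\cdot)}$ norm via the regimes $|\nabla v_n|\lessgtr 1$ is also exactly what the paper does.

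There is, however, a genuine gap in your step (ii): the ``companion splitting for the $A+V|\cdot|^{\alpha(\cdot)}$ measure'' that would let you write the right-hand side of the reverse H\"older inequality against $\tilde\mu:=\mu-A(x,\nabla u)-V(x)|u|^{\alpha(x)}$ is not available. A Brezis--Lieb decomposition of $A(x,\nabla u_n)$ into $A(x,\nabla v_n)+A(x,\nabla u)$ plus an $L^1$-small remainder would require a.e.\ convergence of $\nabla u_n$ to $\nabla u$, which is neither assumed nor derivable here; the hypotheses $(A2)$ and $(A6)$ do not yield such a splitting. (In the paper $(A6)$ is used only once, for weak lower semicontinuity of $u\mapsto\int\phi\,[A(x,\nabla u)+V|u|^{\alpha(x)}]\,\diff x$, giving $\mu\ge A(x,\nabla u)+V|u|^{\alpha(x)}$; it plays no role in any residual-term control.)

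The paper bypasses this by working with \emph{two different} measures on the gradient side. For atomicity of $\bar\nu=\nu-b|u|^{t(x)}$ it passes to a subsequence so that $A(x,\nabla v_n)+V(x)|v_n|^{\alpha(x)}\overset{*}{\rightharpoonup}\bar\mu$ for \emph{some} finite nonnegative measure $\bar\mu$ (not identified with $\tilde\mu$), and proves the reverse H\"older bound $S_b|\phi|_{L^{t(\cdot)}_{\bar\nu}}\le C\max\{|\phi|_{L^{p(\cdot)}_{\bar\mu}},|\phi|_{L^{q(\cdot)}_{\bar\mu}}\}$; Lemma~\ref{V.reserveHolder} then gives the atomic structure of $\bar\nu$. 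For the quantitative bound \eqref{CCP.nu_mu} the paper repeats the Sobolev estimate but now with $u_n$ itself (not $v_n$) and the bump $\phi_{i,\epsilon}$, so that the gradient piece is controlled by $\int\phi_{i,\epsilon}\,[A(x,\nabla u_n)+V|u_n|^{\alpha(x)}]\,\diff x\to\int\phi_{i,\epsilon}\,\diff\mu$, and shrinking $\epsilon\to 0^+$ produces $\mu_i=\mu(\{x_i\})$ directly. If you try to run step (iv) with $v_n$ and your $\tilde\mu$, the atom you extract is that of $\bar\mu$, which you have not related to $\mu_i$. A minor point: for the subcritical localization in step (iii) you should invoke Proposition~\ref{W} on the ball $B_\rho(x_i)$ rather than Proposition~\ref{IV.compact}, whose weight hypothesis is global.
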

Note that the preceding result does not provide any information about a possible loss of mass
at infinity. The next theorem expresses this fact in quantitative terms.
\begin{theorem}\label{T.CCP.infinity}
	Let $(A1)$--$(A6)$, $(V1)$ and $(P1)$ hold. Let $u_n \rightharpoonup u$ in $X$ and set
	$$\mu_\infty:=\lim_{R\to\infty}\underset{n\to\infty}{\lim\sup}\int_{\{|x|>R\}}\left[A(x,\nabla u_n)+V(x)|u_n|^{\alpha(x)}\right]\diff x,$$
	$$\nu_\infty:=\lim_{R\to\infty}\underset{n\to\infty}{\lim\sup}\int_{\{|x|>R\}}b(x)|u_n|^{t(x)}\diff x.$$
	Then 	
	\begin{equation}\label{CCP2.mu}
		\underset{n\to\infty}{\lim\sup}\int_{\mathbb{R}^N}\left[A(x,\nabla u_n)+V(x)|u_n|^{\alpha(x)}\right]\diff x=\mu(\mathbb{R}^N)+\mu_\infty,
	\end{equation}
\begin{equation}\label{CCP2.nu}
	\underset{n\to\infty}{\lim\sup}\int_{\mathbb{R}^N}b(x)|u_n|^{t(x)}\diff x=\nu(\mathbb{R}^N)+\nu_\infty.
\end{equation}
	Assume in addition that:
	\begin{itemize}
		\item[$(\mathcal{E}_\infty)$] There exist positive real numbers $p_\infty$, $q_\infty$, $\alpha_\infty$ and $t_\infty$ such that $$\lim_{|x|\to\infty}p(x)=p_\infty,\ \lim_{|x|\to\infty}q(x)=q_\infty,\ \lim_{|x|\to\infty}\alpha(x)=\alpha_\infty, \text{ and } \lim_{|x|\to\infty}t(x)=t_\infty.$$
	\end{itemize}
	Then
	\begin{equation}\label{CCP2.nu_mu}
		S_b\nu_\infty^{\frac{1}{t_\infty}}\\
		\leq \max\left\{1, M_1^{-\frac{1}{p_\infty}}\right\} \left(\mu_\infty^{\frac{1}{p_\infty}}+\mu_\infty^{\frac{1}{q_\infty}}+ \mu_\infty^{\frac{1}{\alpha_\infty}}\right).
	\end{equation}
\end{theorem}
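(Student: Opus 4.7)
The plan is to treat the three conclusions separately: \eqref{CCP2.mu} and \eqref{CCP2.nu} are mass-decomposition identities obtained by a standard cutoff argument against the weak-$*$ limits, while \eqref{CCP2.nu_mu} is derived by testing the embedding \eqref{Sb} on $\psi_R u_n$ for a cutoff $\psi_R$ that localizes at infinity. After passing to a subsequence I may assume $A(x,\nabla u_n)+V|u_n|^{\alpha(x)}\overset{\ast}{\rightharpoonup}\mu$ and $b|u_n|^{t(x)}\overset{\ast}{\rightharpoonup}\nu$ in $\mathcal{M}(\R^N)$, as in Theorem~\ref{T.CCP}.

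For \eqref{CCP2.mu} I pick $\phi_R\in C_c^\infty(\R^N;[0,1])$ with $\phi_R\equiv 1$ on $B_R$ and $\supp\phi_R\subset B_{2R}$, and decompose $\int_{\R^N}[A(x,\nabla u_n)+V|u_n|^{\alpha(x)}]\,dx$ as $\int\phi_R[\cdots]\,dx+\int(1-\phi_R)[\cdots]\,dx$. Since $\phi_R\in C_0(\R^N)$, the first summand converges as $n\to\infty$ to $\int\phi_R\,d\mu$, and then to $\mu(\R^N)$ as $R\to\infty$; the second summand is sandwiched between $\int_{|x|>2R}[\cdots]\,dx$ and $\int_{|x|>R}[\cdots]\,dx$, both of which yield $\mu_\infty$ after $\limsup_n$ and then $R\to\infty$. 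Convergence of the first summand makes $\limsup_n$ of the total distribute additively, giving \eqref{CCP2.mu}; \eqref{CCP2.nu} is obtained identically with $b|u_n|^{t(x)}$ in place of $A+V|u_n|^{\alpha}$ and $\nu$ in place of $\mu$.

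For \eqref{CCP2.nu_mu} I choose $\psi_R\in C^\infty(\R^N;[0,1])$ with $\psi_R\equiv 0$ on $B_R$, $\psi_R\equiv 1$ outside $B_{2R}$ and $|\nabla\psi_R|\le C/R$; then $\psi_R u_n\in X$ and \eqref{Sb} gives $S_b\|\psi_R u_n\|_{L^{t(\cdot)}(b,\R^N)}\le\|\psi_R u_n\|_X$. The modular $\int b|\psi_R u_n|^{t(x)}\,dx$ is sandwiched between $\int_{|x|>2R}b|u_n|^{t(x)}dx$ and $\int_{|x|>R}b|u_n|^{t(x)}dx$, so its iterated limit equals $\nu_\infty$; Proposition~\ref{norm-modular} combined with $(\mathcal{E}_\infty)$, which makes the oscillation of $t$ on $\supp\psi_R$ vanish as $R\to\infty$, converts this into $\lim_R\limsup_n\|\psi_R u_n\|_{L^{t(\cdot)}(b)}=\nu_\infty^{1/t_\infty}$. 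The $V$-weighted $L^{\alpha(\cdot)}$-part of $\|\psi_R u_n\|_X$ is handled in exactly the same way, using $V|u_n|^{\alpha(x)}\le A(x,\nabla u_n)+V|u_n|^{\alpha(x)}$ to bound its modular by $\mu_\infty$ in the iterated limit, which contributes $\mu_\infty^{1/\alpha_\infty}$.

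For the gradient I expand $\nabla(\psi_R u_n)=\psi_R\nabla u_n+u_n\nabla\psi_R$. The remainder $u_n\nabla\psi_R$ is supported on $\{R\le|x|\le 2R\}$, where Proposition~\ref{W} yields $u_n\to u$ in $L^{p(\cdot)}(B_{2R})$; together with $|\nabla\psi_R|\le C/R$ and $u\in L^{p(\cdot)}(\R^N)$, its $L^{p(\cdot)}$-modular tends to $0$ after $\limsup_n$ and $R\to\infty$, so this term is negligible in $L^{p(\cdot)}+L^{q(\cdot)}$. Setting $v:=\psi_R\nabla u_n$ and writing $v=v\chi_{\{|v|>1\}}+v\chi_{\{|v|\le 1\}}$ with the first piece placed in $L^{p(\cdot)}$ and the second in $L^{q(\cdot)}$, on $\{|v|>1\}$ one has $|\nabla u_n|>1$, so $|\nabla u_n|^{p(x)}\le A(x,\nabla u_n)/M_1$ by \eqref{Est1}; combined with $\psi_R\le\chi_{\{|x|\ge R\}}$, this bounds the modular by $M_1^{-1}\int_{|x|\ge R}[A+V|u_n|^{\alpha}]\,dx$, contributing $(M_1^{-1}\mu_\infty)^{1/p_\infty}$ in the limit. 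On $\{|v|\le 1\}$, the pointwise inequality $|v|^{q(x)}\le|v|^{p(x)}$ together with a dichotomy according to whether $|\nabla u_n|\le 1$ or $|\nabla u_n|>1$ (using both branches of \eqref{Est1}) yields an analogous modular estimate and contributes $(M_1^{-1}\mu_\infty)^{1/q_\infty}$. Using $\max\{1,M_1^{-1/q_\infty}\}\le\max\{1,M_1^{-1/p_\infty}\}$ (valid since $p_\infty\le q_\infty$) and passing to a subsequence on which every $\limsup$ is a limit, summing the three contributions yields \eqref{CCP2.nu_mu}. The hardest step is the sum-space estimate on $\{|v|\le 1\}$: when $\psi_R$ is small one can still have $|\nabla u_n|>1$, forcing the subdivision above, and condition $(\mathcal{E}_\infty)$ is what ultimately allows Proposition~\ref{norm-modular} to collapse to the single limiting exponents $p_\infty,q_\infty,\alpha_\infty,t_\infty$ in the final bound.
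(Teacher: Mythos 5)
Your overall strategy is the same as the paper's: for \eqref{CCP2.mu}--\eqref{CCP2.nu} a cutoff decomposition tested against the weak-$*$ limits, and for \eqref{CCP2.nu_mu} testing \eqref{Sb} on $\psi_R u_n$ with a cutoff vanishing on $B_R$, expanding $\nabla(\psi_R u_n)=\psi_R\nabla u_n+u_n\nabla\psi_R$, bounding the $L^{p(\cdot)}$-, $L^{q(\cdot)}$- and $V$-weighted $L^{\alpha(\cdot)}$-modulars by $M_1^{-1}\int_{\{|x|>R\}}[A(x,\nabla u_n)+V(x)|u_n|^{\alpha(x)}]\diff x$ via \eqref{Est1}, and using Proposition~\ref{norm-modular} together with $(\mathcal{E}_\infty)$ and an $\epsilon$-collapse of the exponents. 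Your splitting of $\psi_R\nabla u_n$ according to $|\psi_R\nabla u_n|\lessgtr 1$ (rather than $|\nabla u_n|\lessgtr 1$ as in the paper) works, since on $\{|\psi_R\nabla u_n|\le 1,\ |\nabla u_n|>1\}$ the inequality $|\psi_R\nabla u_n|^{q(x)}\le|\psi_R\nabla u_n|^{p(x)}$ reduces matters to the large-gradient branch of \eqref{Est1}. One small caveat: do not literally ``pass to a subsequence'' to obtain \eqref{CCP.mu}--\eqref{CCP.nu}; the measures $\mu,\nu$ are part of the standing hypotheses carried over from Theorem~\ref{T.CCP}, and \eqref{CCP2.mu}--\eqref{CCP2.nu} are limsup identities for the full sequence, which a subsequence argument would not deliver.

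The genuine gap is in the treatment of the remainder $u_n\nabla\psi_R$: you invoke $u\in L^{p(\cdot)}(\R^N)$ to conclude that $\int_{B_{2R}\setminus \overline{B_R}}|u\nabla\psi_R|^{p(x)}\diff x\to 0$ as $R\to\infty$, but this membership is not available. The imbedding of Proposition~\ref{X-Imbeddings} gives $X\hookrightarrow L^{r(\cdot)}(\R^N)$ only for $\alpha(\cdot)\le r(\cdot)\le p^\ast(\cdot)$, and under $(P1)$ the exponent $\alpha$ may exceed $p$ (it can range up to $p^\ast(\cdot)\frac{N-1}{N}$), so an element of $X$ need not lie in $L^{p(\cdot)}(\R^N)$ globally; only local $L^{p(\cdot)}$-integrability is guaranteed. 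The step is repaired exactly as in the paper: since $\alpha(\cdot)\le p^\ast(\cdot)$ one has $u\in L^{p^\ast(\cdot)}(\R^N)$, and the H\"older inequality of Proposition~\ref{Holder} gives $\int_{B_{2R}\setminus \overline{B_R}}|u\nabla\psi_R|^{p(x)}\diff x\le 2\,\big||u|^{p(\cdot)}\big|_{L^{\frac{p^\ast(\cdot)}{p(\cdot)}}(B_{2R}\setminus \overline{B_R})}\,\big||\nabla\psi_R|^{p(\cdot)}\big|_{L^{\frac{N}{p(\cdot)}}(B_{2R}\setminus \overline{B_R})}$, where the second factor stays bounded uniformly in $R$ because $|\nabla\psi_R|\le C/R$ and the annulus has measure of order $R^N$ (so $\int|\nabla\psi_R|^N\diff x\le C$), while the first factor tends to $0$ as $R\to\infty$ as the tail of a fixed $L^{p^\ast(\cdot)}$-function. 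With this substitution your argument goes through and coincides with the paper's proof.
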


Before giving a proof of Theorems~\ref{T.CCP} and \ref{T.CCP.infinity}, we review some auxiliary results for Radon measures.

\begin{lemma}[\cite{HKS}] \label{V.convergence}
	Let $\nu,\{\nu_n\}_{n\in\mathbb{N}}$ be nonnegative and finite Radon measures on $\R^N$ such that $\nu_n\overset{\ast }{\rightharpoonup } \nu$ in $\mathcal{M}(\R^N)$. Then, for any $r\in C_{+}(\R^N)$,
	$$
	|\phi|_{L^{r(\cdot)}_{\nu_n}(\R^N)} \to
	|\phi|_{L^{r(\cdot)}_{\nu}(\R^N)}, \quad \forall \phi\in C_c(\R^N).$$
\end{lemma}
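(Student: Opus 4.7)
The plan is to reduce the convergence of the Luxemburg norms to the pointwise-in-$\lambda$ convergence of the associated modulars, which is immediate from the weak$^\ast$ hypothesis. Fix $\phi\in C_c(\mathbb{R}^N)$ and, for each $\lambda>0$, set
\[
\psi_\lambda(x):=\Bigl(\tfrac{|\phi(x)|}{\lambda}\Bigr)^{r(x)},\qquad x\in\mathbb{R}^N.
\]
Since $r\in C_+(\mathbb{R}^N)$ satisfies $r^->1$ and $\phi$ has compact support, $\psi_\lambda$ is continuous with $\supp\psi_\lambda\subseteq\supp\phi$, so $\psi_\lambda\in C_c(\mathbb{R}^N)\subset C_0(\mathbb{R}^N)$. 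Testing the weak$^\ast$ convergence $\nu_n\overset{\ast}{\rightharpoonup}\nu$ against $\psi_\lambda$ then gives, for every $\lambda>0$,
\[
\rho_n(\lambda):=\int_{\mathbb{R}^N}\psi_\lambda\,d\nu_n \;\longrightarrow\; \int_{\mathbb{R}^N}\psi_\lambda\,d\nu=:\rho_\infty(\lambda).
\]

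Next I would denote $\lambda_n:=|\phi|_{L^{r(\cdot)}_{\nu_n}(\mathbb{R}^N)}$ and $\lambda_\star:=|\phi|_{L^{r(\cdot)}_{\nu}(\mathbb{R}^N)}$, and exploit that each modular $\rho_n,\rho_\infty$ is non-increasing and continuous in $\lambda$ on $(0,\infty)$. In the degenerate case $\lambda_\star=0$ (equivalently $\phi=0$ $\nu$-a.e., so $\rho_\infty\equiv 0$) I would pick arbitrary $\varepsilon>0$, use $\rho_n(\varepsilon)\to 0<1$ to get $\rho_n(\varepsilon)\le 1$ eventually, and conclude $\lambda_n\le\varepsilon$; sending $\varepsilon\downarrow 0$ yields $\lambda_n\to 0=\lambda_\star$. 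In the nondegenerate case $\lambda_\star>0$, the strict monotonicity of $\rho_\infty$ (using $r^->1$) together with continuity gives $\rho_\infty(\lambda_\star)=1$. Then for $\lambda>\lambda_\star$ one has $\rho_\infty(\lambda)<1$, hence $\rho_n(\lambda)<1$ eventually and $\lambda_n\le\lambda$, proving $\limsup_n\lambda_n\le\lambda_\star$. For the matching lower bound I would argue by contradiction: if a subsequence $\lambda_{n_k}\to L<\lambda_\star$ and $\lambda\in(L,\lambda_\star)$, then Proposition~\ref{norm-modular} together with monotonicity forces $\rho_{n_k}(\lambda)\le 1$ eventually, whence $\rho_\infty(\lambda)\le 1$, contradicting $\rho_\infty(\lambda)>\rho_\infty(\lambda_\star)=1$.

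The only two substantive inputs are (a) the continuity and strict monotonicity of $\rho_\infty$ (and of each $\rho_n$) in $\lambda$, which follow from dominated convergence on $\{|\lambda'-\lambda|\le\lambda/2\}$ together with the uniform dominating bound $\psi_{\lambda'}\le (\lambda/2)^{-r^+}(1+|\phi|_{L^\infty(\mathbb{R}^N)}^{r^+})\mathbf{1}_{\supp\phi}$, which lies in $L^1(\nu)$ because $\nu$ is finite, and (b) the observation that $\psi_\lambda\in C_c(\mathbb{R}^N)$ so the weak$^\ast$ hypothesis applies directly. I do not expect a serious obstacle: the argument is essentially a repackaging of the weak$^\ast$ definition against a one-parameter family of admissible test functions, together with the standard norm–modular correspondence of Proposition~\ref{norm-modular}. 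The mildest technical point is the degenerate case $\lambda_\star=0$, which needs to be treated separately so as not to divide by $\rho_\infty(\lambda_\star)$.
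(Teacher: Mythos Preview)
The paper does not prove Lemma~\ref{V.convergence}; it quotes the result from \cite{HKS} without argument, so there is no ``paper's own proof'' to compare against. Your proposal is therefore being judged on its own merits.

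Your argument is correct. The key observation that $\psi_\lambda=(|\phi|/\lambda)^{r(\cdot)}\in C_c(\mathbb{R}^N)$ makes the weak$^\ast$ hypothesis directly applicable, and the passage from modular convergence $\rho_n(\lambda)\to\rho_\infty(\lambda)$ to norm convergence $\lambda_n\to\lambda_\star$ via strict monotonicity and continuity is standard and sound. Two small remarks: (i) the strict monotonicity of $\rho_\infty$ does not actually use $r^->1$; it only needs $\nu(\{\phi\ne0\})>0$, which is equivalent to $\lambda_\star>0$; (ii) your dominating function for the continuity-in-$\lambda$ step is slightly imprecise as written (one should split according to whether $2|\phi|/\lambda\lessgtr1$), but a uniform $L^\infty$ bound on $\psi_{\lambda'}$ over $\lambda'\in[\lambda/2,3\lambda/2]$ is immediate and the support is compact, so integrability against the finite measure $\nu$ is clear. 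Neither point is a gap.
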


\begin{lemma}[\cite{HK.ANA2021}]\label{V.reserveHolder}
	Let $\mu,\nu$ be two nonnegative and finite Radon measures
	on $\R^N$such that
	$$
	|\phi|_{L_\nu^{r(\cdot)}(\R^N)}\leq C\max\left\{|\phi|_{L_\mu^{p(\cdot)}(\R^N)},|\phi|_{L_\mu^{q(\cdot)}(\R^N)}\right\},\ \ \forall \phi\in C_c^\infty(\R^N)
	$$
for some constant $C>0$ and for some $p,q,r\in C_+(\R^N)$ satisfying $\max\{p(\cdot),q(\cdot)\}\ll r(\cdot)$. Then, there exist an at most countable set $\{x_i\}_{i\in \mathcal{I}}$ of distinct points in $\R^N$ and
	$\{\nu_i\}_{i\in \mathcal{I}}\subset (0,\infty)$, such that
	$$
	\nu=\sum_{i\in \mathcal{I}}\nu_i\delta_{x_i}.
	$$
\end{lemma}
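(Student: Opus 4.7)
The plan is to show that the reverse H\"older hypothesis forces $\nu$ to be supported on the (at most countable) atomic set of $\mu$, producing the claimed purely atomic decomposition. I would organize the argument into three stages: (i) derive absolute continuity $\nu\ll\mu$; (ii) upgrade this to a quantitative super-linear local estimate relating $\nu$-mass and $\mu$-mass of small balls; and (iii) conclude via a Vitali--Besicovitch covering argument that $\nu$ vanishes on the complement of the atoms of $\mu$.

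For (i), given a Borel $E$ with $\mu(E)=0$, by outer regularity of $\mu$ and inner regularity of $\nu$ it suffices to bound $\nu(K)$ for each compact $K\subset E$. Fixing $\eta>0$ and an open $U\supset E$ with $\mu(U)<\eta$, I would test the hypothesis against a cutoff $\phi\in C_c^\infty(\R^N)$ with $\chi_K\le\phi\le\chi_U$. Converting Luxemburg norms to modulars via Proposition~\ref{norm-modular} yields roughly $\nu(K)^{1/r^-}\le C'\eta^{\min\{1/p^+,1/q^+\}}$, and sending $\eta\to 0$ forces $\nu(K)=0$.

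For (ii), at each $x_0\in\R^N$ and for small $\eps>0$, I would pick $\phi_{x_0,\eps}\in C_c^\infty(\R^N)$ with $\chi_{B(x_0,\eps/2)}\le\phi_{x_0,\eps}\le\chi_{B(x_0,\eps)}$ and plug into the reverse H\"older inequality. Using the continuity of $p,q,r$ to freeze exponents at $x_0$ on $B(x_0,\eps)$ up to controllable error, Proposition~\ref{norm-modular} produces
\[
\nu(B(x_0,\eps/2))^{1/r(x_0)}\le\widetilde{C}\max\!\left\{\mu(B(x_0,\eps))^{1/p(x_0)},\mu(B(x_0,\eps))^{1/q(x_0)}\right\}.
\]
With $\theta(x_0):=r(x_0)/\max\{p(x_0),q(x_0)\}\ge\theta^->1$ uniformly (by $\max\{p,q\}\ll r$), raising to the $r(x_0)$-th power yields the super-linear local estimate $\nu(B(x_0,\eps/2))\le\widehat{C}\,\mu(B(x_0,\eps))^{\theta(x_0)}$.

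For (iii), let $D:=\{x:\mu(\{x\})>0\}$, which is at most countable since $\mu$ is finite. The claim is $\nu(A)=0$ for every compact $A\subset\R^N\setminus D$. For each $x\in A$, $\mu(\{x\})=0$ allows the choice of $r_0(x)>0$ with $\mu(B(x,2r_0(x)))\le\eta$. Vitali's covering theorem (valid for finite Radon measures on $\R^N$) applied to the fine family $\{B(x,r):x\in A,\,0<r\le r_0(x)\}$ extracts disjoint balls $B_j=B(x_j,r_j)$ with $\mu\bigl(A\setminus\bigcup_jB_j\bigr)=0$; by (i) the same null-set statement holds for $\nu$. Applying (ii) to each $B_j$ with $\eps=2r_j$ and exploiting the bounded multiplicity of the doubled balls $\{B(x_j,2r_j)\}$ in $\R^N$ (a standard packing bound gives a dimensional constant $C_N$), one obtains
\[
\nu(A)\le\sum_j\nu(B_j)\le\widehat{C}\,\eta^{\theta^--1}\sum_j\mu(B(x_j,2r_j))\le\widehat{C}\,C_N\,\eta^{\theta^--1}\mu(\R^N)\to 0\quad\text{as }\eta\to 0.
\]
Hence $\nu$ is concentrated on $D$, and setting $\{x_i\}_{i\in\mathcal{I}}:=\{x\in D:\nu(\{x\})>0\}$, $\nu_i:=\nu(\{x_i\})$, delivers $\nu=\sum_{i\in\mathcal{I}}\nu_i\delta_{x_i}$. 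The main technical hurdle is stage (ii)---cleanly handling the variable-exponent Luxemburg norms and producing $\eps$-uniform constants from the continuity of $p,q,r$; once the super-linear local estimate is secured, the covering argument of (iii) reduces to a routine measure-theoretic computation, and (i) is a direct specialization of the same norm--modular translation.
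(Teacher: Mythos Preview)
The paper does not supply its own proof of this lemma; it is quoted from \cite{HK.ANA2021}. Your overall strategy---absolute continuity, a localized super-linear estimate, then a covering argument---is the standard route, and stages (i) and (ii) are essentially correct.

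There is, however, a genuine gap in stage (iii). The assertion that the doubled balls $\{B(x_j,2r_j)\}$ coming from a pairwise \emph{disjoint} family $\{B(x_j,r_j)\}$ have overlap bounded by a dimensional constant is false. A one-dimensional counterexample: the disjoint intervals $I_k=\bigl(1-4^{-k+1},\,1-4^{-k}\bigr)$, $k\ge 1$, have radii bounded by $3/8$, yet each doubled interval $2I_k=\bigl(1-\tfrac{11}{2}4^{-k},\,1+\tfrac{1}{2}4^{-k}\bigr)$ contains the point $1$, so the overlap there is infinite. Besicovitch gives bounded overlap but not disjointness; Vitali gives disjointness but no control on dilates. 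Your final chain therefore breaks at $\sum_j\mu(2B_j)\le C_N\,\mu(\R^N)$.

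The repair is short and removes the need to double. First extend the hypothesis from $C_c^\infty(\R^N)$ to indicators of bounded open sets by taking $0\le\phi_n\nearrow\chi_U$ and passing to the limit in the modulars (hence in the Luxemburg norms, via Proposition~\ref{norm-modular}). Testing with $\chi_{B(x_0,\eps)}$ and freezing exponents as you already do upgrades (ii) to a \emph{same-ball} estimate $\nu(B(x_0,\eps))\le\widehat{C}\,\mu(B(x_0,\eps))^{\theta^-}$ for $x_0\in A$ and small $\eps$, with $\widehat{C}$ uniform on the compact $A$. Your Vitali step then goes through verbatim with $B_j$ in place of $2B_j$: since the $B_j$ are disjoint,
\[
\nu(A)\le\sum_j\nu(B_j)\le\widehat{C}\,\eta^{\theta^--1}\sum_j\mu(B_j)\le\widehat{C}\,\eta^{\theta^--1}\mu(\R^N)\to 0.
\]
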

The following result is an extension of the Brezis--Lieb Lemma to weighted variable exponent Lebesgue spaces.
\begin{lemma}[\cite{HKS}]\label{V.brezis-lieb}
	Let $\{f_n\}_{n\in\mathbb{N}}$ be a bounded sequence in $L^{r(\cdot)}(m,\R^N)$ ($r\in C_{+}(\R^N)$, $m\in \mathcal{P}_+(\R^N )$) and $f_n(x)\to f(x)$ a.e. $x\in\R^N$. Then $f\in L^{r(\cdot)}(m,\R^N)$ and
	$$
	\lim_{n\to\infty}\int_{\R^N} \left|m|f_n|^{r(x)}
	-m|f_n-f|^{r(x)}-m|f|^{r(x)}\right|\diff x=0.
	$$
\end{lemma}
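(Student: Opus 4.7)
\smallskip
\noindent\textbf{Proof proposal.} My plan is to adapt the classical Brezis--Lieb argument to the weighted variable exponent setting, with the main issue being uniformity in $x$ of the elementary pointwise inequality underlying the proof. First, from Fatou's lemma applied to $m(x)|f_n(x)|^{r(x)}$, combined with the uniform bound on $\int_{\R^N} m|f_n|^{r(x)}\diff x$ coming from boundedness of $\{f_n\}$ in $L^{r(\cdot)}(m,\R^N)$ (via Proposition~\ref{norm-modular}), I obtain $\int_{\R^N}m|f|^{r(x)}\diff x<\infty$, hence $f\in L^{r(\cdot)}(m,\R^N)$.

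Second, I invoke the elementary pointwise inequality: for every $\eps\in(0,1)$ there is a constant $C_\eps>0$, depending only on $\eps$ and on the bounds $r^{-},r^{+}$, such that
\begin{equation*}
\bigl|\,|a+b|^{r}-|a|^{r}-|b|^{r}\,\bigr|\leq \eps\,|a|^{r}+C_\eps\,|b|^{r},\quad \forall\,a,b\in\R,\ \forall\, r\in[r^{-},r^{+}].
\end{equation*}
The uniform dependence on $r$ in the compact interval $[r^{-},r^{+}]$ is what makes the variable-exponent case work; I would derive it by fixing $\eps$, writing $g_{r}(a,b):=|a+b|^{r}-|a|^{r}-|b|^{r}$, and noting that on the region $|b|\le\delta|a|$ one gets $|g_r|\le \eps|a|^r$ for $\delta=\delta(\eps,r^-,r^+)$ small enough (from the mean value theorem), while on $|b|>\delta|a|$ one has $|g_r|\le C(r^-,r^+)\delta^{-r}|b|^{r}\leq C_\eps|b|^r$.

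Third, I apply this inequality pointwise with $a=f_n(x)-f(x)$, $b=f(x)$, $r=r(x)$, and multiply by $m(x)$:
\begin{equation*}
\bigl|\,m|f_n|^{r(x)}-m|f_n-f|^{r(x)}-m|f|^{r(x)}\,\bigr|\leq \eps\,m|f_n-f|^{r(x)}+C_\eps\,m|f|^{r(x)}.
\end{equation*}
Setting
\begin{equation*}
W_{\eps,n}(x):=\max\left\{0,\ \bigl|\,m|f_n|^{r(x)}-m|f_n-f|^{r(x)}-m|f|^{r(x)}\,\bigr|-\eps\,m|f_n-f|^{r(x)}\right\},
\end{equation*}
we have $0\leq W_{\eps,n}(x)\leq C_\eps\,m(x)|f(x)|^{r(x)}\in L^{1}(\R^N)$, and $W_{\eps,n}(x)\to 0$ a.e. by the a.e. convergence $f_n\to f$. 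The Lebesgue dominated convergence theorem then yields $\int_{\R^N}W_{\eps,n}\diff x\to 0$ as $n\to\infty$.

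Fourth, the definition of $W_{\eps,n}$ gives
\begin{equation*}
\int_{\R^N}\bigl|\,m|f_n|^{r(x)}-m|f_n-f|^{r(x)}-m|f|^{r(x)}\,\bigr|\diff x\leq \int_{\R^N}W_{\eps,n}\diff x+\eps\int_{\R^N}m|f_n-f|^{r(x)}\diff x.
\end{equation*}
The last integral is bounded uniformly in $n$ by some constant $K$ (since $\|f_n\|_{L^{r(\cdot)}(m,\R^N)}$ is bounded and $f\in L^{r(\cdot)}(m,\R^N)$, so by the triangle inequality and Proposition~\ref{norm-modular} the modulars of $f_n-f$ are uniformly controlled). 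Taking $\limsup_{n\to\infty}$ followed by $\eps\to 0^{+}$ delivers the claimed identity.

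The one step I expect to require some care is the uniform-in-$r$ elementary inequality; everything else is a routine adaptation. Should I instead prefer a softer route, I could argue from the classical (constant-$r$) Brezis--Lieb inequality applied on each super-level set $\{x:r(x)\in[r_k,r_{k+1}]\}$ of a finite partition of $[r^-,r^+]$, but the uniform-$r$ pointwise estimate is cleaner and keeps the proof contained in a single dominated-convergence argument.
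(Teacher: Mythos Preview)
The paper does not supply its own proof of this lemma; it is quoted from \cite{HKS} and used as a black box. Your argument is correct and is exactly the standard Brezis--Lieb proof transported to the weighted variable-exponent setting: Fatou for the membership $f\in L^{r(\cdot)}(m,\R^N)$, the elementary inequality $||a+b|^{r}-|a|^{r}-|b|^{r}|\le \eps|a|^{r}+C_\eps|b|^{r}$ with constants uniform for $r\in[r^{-},r^{+}]$, and then dominated convergence on the truncated integrand $W_{\eps,n}$. The only point that genuinely needs justification beyond the classical case is precisely the uniform-in-$r$ version of the pointwise inequality, and your sketch (split according to $|b|\le\delta|a|$ versus $|b|>\delta|a|$ and use $r^-\le r\le r^+$) handles it; this is also how the cited reference proceeds.
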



We are now in a position to prove Theorems~\ref{T.CCP} and \ref{T.CCP.infinity}. In the rest of this section, we denote the ball in $\R^N$ centered at $x_0$ with radius $\epsilon$ by $B_\epsilon(x_0)$ and simply write it as $B_\epsilon$ when $x_0$ is the origin. We also denote $B_\epsilon^c=\R^N\setminus B_\epsilon$

\begin{proof}[\textbf{Proof of Theorem~\ref{T.CCP}}]
	
	Let $v_n=u_n-u$. Then, up to a subsequence, we have
	\begin{eqnarray}\label{P_CCP.conv.of.v_n}
		\begin{cases}
			v_n(x) &\to  \quad 0 \ \  \text{a.e.}\ \  x\in\R^N,\\
			v_n &\rightharpoonup \quad 0 \ \  \text{in}\ \  X.
		\end{cases}
	\end{eqnarray}
	So, by Lemma~\ref{V.brezis-lieb}, we deduce that
	$$
	\lim_{n\to\infty}\int_{\R^N}\left|b|u_n|^{t(x)}
	-b|v_n|^{t(x)}-b|u|^{t(x)}\right|\diff x=0.$$
	From this and \eqref{P_CCP.conv.of.v_n}, we easily obtain
	$$
	\lim_{n\to\infty}\Big(\int_{\R^N} \phi b|u_n|^{t(x)}\diff x
	-\int_{\R^N}\phi b|v_n|^{t(x)} \diff x\Big)
	=\int_{\R^N} \phi b|u|^{t(x)} \diff x, \quad \forall  \phi\in C_0({\R^N}),$$
	i.e.,
	\begin{equation}\label{P_CCP.w*-vn}
		b|v_n|^{t(x)}\overset{\ast }{\rightharpoonup }\bar{\nu}=\nu-b|u|^{t(x)}\quad \text{in} \ \ \mathcal{M}(\R^N).
	\end{equation}
	By \eqref{Est.A}, it is clear that $\left\{A(x,\nabla v_n)+V(x)|v_n|^{\alpha(x)} \right\}_{n\in\mathbb{N}}$ is bounded in $L^1(\R^N)$. So up to a subsequence, we have
	\begin{equation}\label{P_CCP.w*-bar.mu}
		A(x,\nabla v_n)+V(x)|v_n|^{\alpha(x)} \overset{\ast }{\rightharpoonup }\quad \bar{\mu}\quad \text{in}\ \ \mathcal{M}(\R^N)
	\end{equation}
	for some finite nonnegative Radon measure $\bar{\mu}$ on $\R^N$. By the definition of $X$, it is easy to see that $\phi v\in X$ for any $\phi\in C_c^\infty(\R^N)$ with $\operatorname{supp}(\phi)\subset B_R$ and for any $v\in X$. So, utilizing \eqref{Sb}, for any $\phi\in C_c^\infty(\R^N),$ we get
	\begin{align}\label{P_CCP.*}
		\notag&S_b|\phi v_n|_{L^{t(\cdot)}(b,\R^N)}\\
		\notag &\leq \big||\nabla(\phi v_n)|\big|_{L^{p(\cdot)}(\R^N)+L^{q(\cdot)}(\R^N)}+|\phi v_n|_{L^{\alpha(\cdot)}(V,\R^N)}\notag\\
		&\leq \big||\phi\nabla v_n|\big|_{L^{p(\cdot)}(\R^N)+L^{q(\cdot)}(\R^N)}+\big||v_n\nabla\phi|\big|_{L^{p(\cdot)}(\R^N)+L^{q(\cdot)}(\R^N)}+|\phi v_n|_{L^{\alpha(\cdot)}(V,\R^N)}.
	\end{align}
We will prove that
\begin{equation}\label{P_CCP.x}
\limsup_{n\to\infty}\big||v_n\nabla\phi|\big|_{L^{p(\cdot)}(\R^N)+L^{q(\cdot)}(\R^N)}=0,
\end{equation}
\begin{equation}\label{P_CCP.y}
	\limsup_{n\to\infty}|\phi v_n|_{L^{\alpha(\cdot)}(V,\R^N)}=0,
\end{equation}
and
\begin{equation}\label{P_CCP.a}
	\limsup_{n\to\infty}\big||\phi\nabla v_n|\big|_{L^{p(\cdot)}(\R^N)+L^{q(\cdot)}(\R^N)}
	\leq 2\max\left\{1,M_1^{-\frac{1}{p^-}}\right\} \max\left\{|\phi|_{L^{p(\cdot)}_{\bar{\mu}}(\R^N)},|\phi|_{L^{q(\cdot)}_{\bar{\mu}}(\R^N)}\right\},
\end{equation}
where $M_1$ is given in \eqref{Est1}.

Assuming \eqref{P_CCP.x}-\eqref{P_CCP.a} for the moment, by taking the limit superior as $n\to \infty$ in \eqref{P_CCP.*} and  invoking Lemma~\ref{V.convergence} we obtain
\begin{equation*}
	S_b|\phi|_{L_{\bar{\nu}}^{t(\cdot)}(\R^N)}
	\leq 2\max\left\{1,M_1^{-\frac{1}{p^-}}\right\} \max\left\{|\phi|_{L^{p(\cdot)}_{\bar{\mu}}(\R^N)},|\phi|_{L^{q(\cdot)}_{\bar{\mu}}(\R^N)}\right\};
\end{equation*}
hence, \eqref{CCP.form.nu} follows in view of Lemma~\ref{V.reserveHolder} and \eqref{P_CCP.w*-vn}.

To see \eqref{P_CCP.x}, we first notice that
\begin{align*}
v_n\nabla\phi=v_n\nabla\phi\chi_{\{|v_n\nabla\phi|\leq 1\}}+v_n\nabla\phi\chi_{\{|v_n\nabla\phi|\geq 1\}}.
\end{align*}
From this and the facts that $v_n\nabla\phi\chi_{\{|v_n\nabla\phi|\leq 1\}}\in \left(L^{p(\cdot)}(\R^N)\right)^N$ and $v_n\nabla\phi\chi_{\{|v_n\nabla\phi|\geq 1\}}\in \left(L^{q(\cdot)}(\R^N)\right)^N$ we deduce that
\begin{align*}
	\big||v_n\nabla\phi|\big|_{L^{p(\cdot)}(\R^N)+L^{q(\cdot)}(\R^N)}&\leq \big||v_n\nabla\phi\chi_{\{|v_n\nabla\phi|\leq 1\}}|\big|_{L^{p(\cdot)}(\R^N)}+\big||v_n\nabla\phi\chi_{\{|v_n\nabla\phi|\geq 1\}}|\big|_{L^{q(\cdot)}(\R^N)}\\
	&\leq \big||v_n\nabla\phi|\big|_{L^{p(\cdot)}(B_R)}+\big||v_n\nabla\phi|\big|_{L^{q(\cdot)}(B_R)}\\
	&\leq \big||\nabla\phi|\big|_{L^{\infty}(\R^N)}\left(|v_n|_{L^{p(\cdot)}(B_R)}+|v_n|_{L^{q(\cdot)}(B_R)}\right).
\end{align*}
Then we obtain \eqref{P_CCP.x} by invoking Proposition~\ref{W} and \eqref{P_CCP.conv.of.v_n}.

To see \eqref{P_CCP.y}, we note that
\begin{equation*}
	|\phi v_n|_{L^{\alpha(\cdot)}(V,\R^N)}=|\phi v_n|_{L^{\alpha(\cdot)}(V,B_R)}\leq |\phi|_{L^\infty(\R^N)}\, |v_n|_{L^{\alpha(\cdot)}(B_R)}.
\end{equation*}
By this, we obtain \eqref{P_CCP.y} in view of Proposition~\ref{W}.

Finally, we prove \eqref{P_CCP.a}. To this end, we first note that by \eqref{Est1},
\begin{equation*}
	\int_{\R^N}|\phi\nabla v_n\chi_{\{|\nabla v_n|\geq 1\}}|^{p(x)}\diff x\leq C_q\int_{\R^N}|\nabla v_n|^{p(x)}\chi_{\{|\nabla v_n|\geq 1\}}\diff x\leq C_qM_1^{-1}\int_{\R^N}A(x,\nabla v_n)\diff x<\infty
\end{equation*}
and
\begin{equation*}
	\int_{\R^N}|\phi\nabla v_n\chi_{\{|\nabla v_n|\leq 1\}}|^{q(x)}\diff x\leq C_q\int_{\R^N}|\nabla v_n|^{q(x)}\chi_{\{|\nabla v_n|\leq 1\}}\diff x\leq C_qM_1^{-1}\int_{\R^N}A(x,\nabla v_n)\diff x<\infty,
\end{equation*}
where $C_q:=1+|\phi|_{L^\infty(\R^N)}^{q^+}$. Hence, $\phi\nabla v_n\chi_{\{|\nabla v_n|\leq 1\}}\in \left(L^{q(\cdot)}(\R^N)\right)^N$ and $\phi\nabla v_n\chi_{\{|\nabla v_n|\geq 1\}}\in \left(L^{p(\cdot)}(\R^N)\right)^N$. From this and the fact that
\begin{equation*}
	\phi\nabla v_n=\phi\nabla v_n\chi_{\{|\nabla v_n|\leq 1\}}+\phi\nabla v_n\chi_{\{|\nabla v_n|\geq 1\}},
\end{equation*}
we get that
\begin{equation}\label{P_CCP.lambda_n-E}
	\lambda_n:=\big||\phi\nabla v_n|\big|_{L^{p(\cdot)}(\R^N)+L^{q(\cdot)}(\R^N)}\leq \lambda_n^{1}+\lambda_n^{2},
\end{equation}
where \begin{equation}\label{P_CCP.lambda_n-E*}
	\lambda_n^1:= \big||\phi\nabla v_n\chi_{\{|\nabla v_n|\geq 1\}}|\big|_{L^{p(\cdot)}(B_R)}\ \ \text{and}\ \ \lambda_n^2:=\big||\phi\nabla v_n\chi_{\{|\nabla v_n|\leq 1\}}|\big|_{L^{q(\cdot)}(B_R)}.
\end{equation}
Up to a subsequence, we may assume that
\begin{equation}\label{P_CCP.lambda_n}
	\lim_{n\to\infty}\lambda_n= \limsup_{n\to\infty}\lambda_n=:\lambda_*
	,\ \lim_{n\to\infty}\lambda_n^i= \limsup_{n\to\infty}\lambda_n^i=:\lambda_*^i\ (i=1,2).
\end{equation}
Since  $\{\lambda_n\}_{n\in\mathbb{N}}$, $\{\lambda_n^1\}_{n\in\mathbb{N}}$ and $\{\lambda_n^2\}_{n\in\mathbb{N}}$ are bounded sequences, we have $\lambda_*, \lambda_*^1, \lambda_*^2\in [0,\infty)$. Obviously, \eqref{P_CCP.a} holds for the case $\lambda_*=0$. Let us consider the case $\lambda_*>0$. From \eqref{P_CCP.lambda_n-E} we have $\lambda_*\leq \lambda_*^1+\lambda_*^2$. Hence, we obtain
\begin{equation}\label{P_CCP.a*}
	\lambda_*\leq 2\max\left\{\lambda_*^1,\lambda_*^2\right\}.
\end{equation}
Thus, \eqref{P_CCP.a} immediately follows if we can show that
\begin{equation}\label{P_CCP.b}
	\lambda_*^1\leq \max\left\{1,M_1^{-\frac{1}{p^-}}\right\}|\phi|_{L^{p(\cdot)}_{\bar{\mu}}(\R^N)}
\end{equation}
and
\begin{equation}\label{P_CCP.c}
\lambda_*^2\leq \max\left\{1,M_1^{-\frac{1}{q^-}}\right\}|\phi|_{L^{q(\cdot)}_{\bar{\mu}}(\R^N)}.
\end{equation}
We only prove \eqref{P_CCP.b} since \eqref{P_CCP.c} can be proved similarly. Clearly, \eqref{P_CCP.b} holds for the case $\lambda_*^1=0$. For the case $\lambda_*^1>0$, using \eqref{Est1} we have that for $n$ large,
	\begin{align*}
		1&=\int_{\R^N} \left|\frac{\phi\nabla v_n\chi_{\{|\nabla v_n|\geq 1\}}}{\lambda_n^1}\right|^{p(x)}\diff x\\
&\leq \int_{\R^N }\left|\frac{\phi}{\lambda_n^1}\right|^{p(x)}M_1^{-1} A(x,\nabla v_n)\diff x\\
		&\leq \int_{\R^N }\left|\frac{\phi}{\min\left\{1,M_1^{\frac{1}{p^-}}\right\}\lambda_n^1}\right|^{p(x)} A(x,\nabla v_n)\diff x\\
&\leq \int_{\R^N }\left|\frac{\phi}{\min\left\{1,M_1^{\frac{1}{p^-}}\right\}\lambda_n^1}\right|^{p(x)} \round{A(x,\nabla v_n) + V(x)|v_n|^{\alpha(x)}}\diff x.
	\end{align*}
Taking the limit as $n\to\infty$ in the last equality and using \eqref{P_CCP.w*-bar.mu} we obtain
	\begin{equation*}
		1\leq \int_{\R^N} \left|\frac{\phi}{\min\left\{1,M_1^{\frac{1}{p^-}}\right\}\lambda_*^1}\right|^{p(x)}\diff \bar{\mu}.
	\end{equation*}
	Equivalently,
	\begin{equation*}
		\lambda_*^1\leq \max\left\{1,M_1^{-\frac{1}{p^-}}\right\}|\phi|_{L_{\bar{\mu}}^{p(\cdot)}(\R^N)}.
	\end{equation*}
Thus, we have proved \eqref{P_CCP.b} (and similarly, \eqref{P_CCP.c}); hence, \eqref{P_CCP.a} follows due to \eqref{P_CCP.a*}.

We claim that $\{x_i\}_{i\in \mathcal{I}}\subset \mathcal{C}$. Assume by contradiction that there is some $x_i\in \R^N\setminus\mathcal{C}$. Let $\delta>0$ be such that $\overline{B_{2\delta}(x_i)}\subset \mathbb{R}^N\setminus\mathcal{C}$. Set $B=B_\delta(x_i)\cap \R^N$ then $\overline{B}\subset \R^N\setminus\mathcal{C}$ and hence, $t(x)<p^\ast(x)$ for all $x\in\overline{B}$. Thus, $u_n\to u$ in $L^{t(\cdot)}(B)$ in view of Proposition~\ref{W}. Equivalently, $\int_{B}|u_n-u|^{t(x)}\diff x\to 0$ due to Proposition~\ref{norm-modular}; hence, $\int_{B}b|u_n-u|^{t(x)}\diff x\to 0$ since
	$$\int_{B}b|u_n-u|^{t(x)}\diff x\leq |b|_{L^\infty(\R^N)}\int_{B}|u_n-u|^{t(x)}\diff x.$$
	Thus, invoking Lemma~\ref{V.brezis-lieb} again, we infer
	$$\int_{B}b|u_n|^{t(x)}\diff x\to \int_{B}b|u|^{t(x)}\diff x.$$
	From this and the fact that $\nu (B)\leq \liminf_{n\to \infty}\int_{B}b|u_n|^{t(x)}\diff x$ (see \cite[Proposition 1.203]{Fonseca}), we obtain $\nu (B)\leq \int_{B}b|u|^{t(x)}\diff x.$
	Meanwhile, from \eqref{CCP.form.nu}, we have
	$$\nu (B)\geq \int_{B}b|u|^{t(x)}\diff x+\nu_i>\int_{B}b|u|^{t(x)}\diff x,$$
	a contradiction. So $\{x_i\}_{i\in \mathcal{I}}\subset \mathcal{C}$.

	Next, to obtain \eqref{CCP.nu_mu}, let $\eta$ be in $C_c^\infty(\mathbb{R}^N)$ such that $0\leq \eta\leq 1,$ $\eta\equiv 1$ on $B_{\frac{1}{2}}(0)$ and $\eta\equiv 0$ outside $B_1(0)$. Fix $i\in \mathcal{I}$ and take $\epsilon>0$.  Set $\phi_{i,\epsilon}(x):=
	\eta(\frac{x-x_i}{\epsilon})$ and
	\begin{gather*}
		p^+_{i,\epsilon}:=\sup_{x\in B_\epsilon(x_i)}p(x),\quad
		p^-_{i,\epsilon}:=\inf_{x\in B_\epsilon(x_i)}p(x),\\
		t^+_{i,\epsilon}:=\sup_{x\in B_\epsilon(x_i)}t(x),\quad
		t^-_{i,\epsilon}:=\inf_{x\in B_\epsilon(x_i)}t(x).
	\end{gather*}
		Thus by \eqref{Sb}, we have
\begin{align*}
	\notag S_b&|\phi_{i,\epsilon} u_n|_{L^{t(\cdot)}(b,\R^N)}\\
	\notag \leq& \big||\nabla(\phi_{i,\epsilon} u_n)|\big|_{L^{p(\cdot)}(\R^N)+L^{q(\cdot)}(\R^N)}+|\phi_{i,\epsilon} u_n|_{L^{\alpha(\cdot)}(V,\R^N)}\\
	\leq &\big||\phi_{i,\epsilon}\nabla u_n|\big|_{L^{p(\cdot)}(\R^N)+L^{q(\cdot)}(\R^N)}+|\phi_{i,\epsilon} u_n|_{L^{\alpha(\cdot)}(V,\R^N)}+\big||u_n\nabla\phi_{i,\epsilon}|\big|_{L^{p(\cdot)}(\R^N)+L^{q(\cdot)}(\R^N)}.
\end{align*}
	Taking the limit superior as $n\to \infty$ and  invoking Lemma~\ref{V.convergence}, then taking the limit superior as $\epsilon\to 0^+$ in the last inequality  we obtain
	\begin{multline}\label{P_CCP.est1}
		S_b \limsup_{\epsilon\to 0^+}|\phi_{i,\epsilon}|_{L^{t(\cdot)}_\nu(\R^N)}\leq \limsup_{\epsilon\to 0^+}\limsup_{n\to\infty} \big||\phi_{i,\epsilon}\nabla u_n|\big|_{L^{p(\cdot)}(\R^N)+L^{q(\cdot)}(\R^N)}\\+\limsup_{\epsilon\to 0^+}\limsup_{n\to\infty}|\phi_{i,\epsilon} u_n|_{L^{\alpha(\cdot)}(V,\R^N)}
		+\limsup_{\epsilon\to 0^+}\limsup_{n\to\infty}\big||u_n\nabla\phi_{i,\epsilon}|\big|_{L^{p(\cdot)}(\R^N)+L^{q(\cdot)}(\R^N)}.
	\end{multline}
	From Proposition~\ref{norm-modular}, we have
	$$|\phi_{i,\epsilon}|_{L^{t(\cdot)}_\nu(\R^N)}\geq \min\left\{\left(\int_{B_\epsilon(x_i)}|\phi_{i,\epsilon}|^{t(x)}\diff\nu\right)^{\frac{1}{t^+_{i,\epsilon}}} ,\left(\int_{B_\epsilon(x_i)}|\phi_{i,\epsilon}|^{t(x)}\diff\nu\right)^{\frac{1}{t^-_{i,\epsilon}}}\right\}.$$
	It is clear that
	$$\int_{B_\epsilon(x_i)}|\phi_{i,\epsilon}|^{t(x)}\diff\nu\geq
	\int_{B_{\frac{\epsilon}{2}}(x_i)}|
	\phi_{i,\epsilon}|^{t(x)}\diff\nu\geq \nu_i.$$
	Thus, we obtain
	\begin{equation*}
		|\phi_{i,\epsilon}|_{L^{t(\cdot)}_\nu(\R^N)}\geq \min\left\{\nu_i^{\frac{1}{t^+_{i,\epsilon}}} ,\nu_i^{\frac{1}{t^-_{i,\epsilon}}}\right\}.
	\end{equation*}
Hence,
\begin{equation}\label{P_CCP.est2}
	\limsup_{\epsilon\to 0^+}|\phi_{i,\epsilon}|_{L^{t(\cdot)}_\nu(\R^N)}\geq \nu_i^{\frac{1}{t(x_i)}}.
\end{equation}
	On the other hand, arguing as that obtained \eqref{P_CCP.lambda_n-E} we get that
	\begin{multline}\label{P_CCP.Grad1}
		\big||\phi_{i,\epsilon}\nabla u_{n}|\big|_{L^{p(\cdot)}(\R^N)+L^{q(\cdot)}(\R^N)}\\
		\leq \big||\phi_{i,\epsilon}\nabla u_{n}\chi_{\{|\nabla u_{n}|\geq 1\}}|\big|_{L^{p(\cdot)}(B_\epsilon(x_i))}+\big||\phi_{i,\epsilon}\nabla u_{n}\chi_{\{|\nabla u_{n}|\leq 1\}}|\big|_{L^{q(\cdot)}(B_\epsilon(x_i))}.
	\end{multline}
	Invoking Proposition~\ref{norm-modular}, we have
	\begin{align}\label{P_CCP.Grad2}
		\notag&\big||\phi_{i,\epsilon}\nabla u_{n}\chi_{\{|\nabla u_{n}|\geq 1\}}|\big|_{L^{p(\cdot)}(B_\epsilon(x_i))}\\
		\notag&\leq\max\left\{\left(\int_{B_\epsilon(x_i)}|\nabla u_{n}|^{p(x)}\chi_{\{|\nabla u_{n}|\geq 1\}}\phi_{i,\epsilon}^{p(x)}\diff x\right)^{\frac{1}{p^+_{i,\epsilon}}},\left(\int_{B_\epsilon(x_i)}|\nabla u_{n}|^{p(x)}\chi_{\{|\nabla u_{n}|\geq 1\}}\phi_{i,\epsilon}^{p(x)}\diff x\right)^{\frac{1}{p^-_{i,\epsilon}}}\right\} \\
		\notag	&\leq\max\left\{\left(\int_{B_\epsilon(x_i)}M_1^{-1}A(x,\nabla u_{n})\phi_{i,\epsilon}\diff x\right)^{\frac{1}{p^+_{i,\epsilon}}},\left(\int_{B_\epsilon(x_i)}M_1^{-1}A(x,\nabla u_{n})\phi_{i,\epsilon}\diff x\right)^{\frac{1}{p^-_{i,\epsilon}}}\right\} \\
		\notag	\notag&\leq\max\left\{M_1^{-\frac{1}{p^+_{i,\epsilon}}},M_1^{-\frac{1}{p^-_{i,\epsilon}}}\right\}\max\biggl\{\left(\int_{B_\epsilon(x_i)}\left[A(x,\nabla u_{n})+V(x)|u_{n}|^{\alpha(x)}\right]\phi_{i,\epsilon}\diff x\right)^{\frac{1}{p^+_{i,\epsilon}}},\\
		&{}\hspace*{6cm}\left(\int_{B_\epsilon(x_i)}\left[A(x,\nabla u_{n})+V(x)|u_{n}|^{\alpha(x)}\right]\phi_{i,\epsilon}\diff x\right)^{\frac{1}{p^-_{i,\epsilon}}}\biggr\}.
	\end{align}
	Similarly we have
	\begin{align}\label{P_CCP.Grad3}
		\notag&\big||\phi_{i,\epsilon}\nabla u_{n}\chi_{\{|\nabla u_{n}|\leq 1\}}|\big|_{L^{q(\cdot)}(B_\epsilon(x_i))}\\
		\notag	&\leq\max\left\{\left(\int_{B_\epsilon(x_i)}|\nabla u_{n}|^{q(x)}\phi_{i,\epsilon}^{q(x)}\chi_{\{|\nabla u_{n}|\leq 1\}}\diff x\right)^{\frac{1}{q^+_{i,\epsilon}}},\left(\int_{B_\epsilon(x_i)}|\nabla u_{n}|^{q(x)}\phi_{i,\epsilon}^{q(x)}\chi_{\{|\nabla u_{n}|\leq 1\}}\diff x\right)^{\frac{1}{q^-_{i,\epsilon}}}\right\} \\
		\notag	&\leq\max\left\{M_1^{-\frac{1}{q^+_{i,\epsilon}}},M_1^{-\frac{1}{q^-_{i,\epsilon}}}\right\}\max\biggl\{\left(\int_{B_\epsilon(x_i)}\left[A(x,\nabla u_{n})+V(x)|u_{n}|^{\alpha(x)}\right]\phi_{i,\epsilon}\diff x\right)^{\frac{1}{q^+_{i,\epsilon}}},\\
		&{}\hspace*{6cm}\left(\int_{B_\epsilon(x_i)}\left[A(x,\nabla u_{n})+V(x)|u_{n}|^{\alpha(x)}\right]\phi_{i,\epsilon}\diff x\right)^{\frac{1}{q^-_{i,\epsilon}}}\biggr\}.
	\end{align}
		From \eqref{P_CCP.Grad1}--\eqref{P_CCP.Grad3} we obtain
	\begin{multline*}
		\limsup_{n\to\infty}\big||\phi_{i,\epsilon}\nabla u_{n}|\big|_{L^{p(\cdot)}(\R^N)+L^{q(\cdot)}(\R^N)}\\
		\leq2\max\left\{ M_1^{-\frac{1}{p^-_{i,\epsilon}}},M_1^{-\frac{1}{q^+_{i,\epsilon}}}\right\} \max\left\{\mu(B_\epsilon(x_i))^{\frac{1}{p^-_{i,\epsilon}}},\mu(B_\epsilon(x_i))^{\frac{1}{q^+_{i,\epsilon}}}\right\}.
	\end{multline*}
Thus, one has
\begin{multline}\label{P_CCP.est3}
\limsup_{\epsilon\to 0^+}	\limsup_{n\to\infty}\big||\phi_{i,\epsilon}\nabla u_{n}|\big|_{L^{p(\cdot)}(\R^N)+L^{q(\cdot)}(\R^N)}\\
	\leq 2\max\left\{ M_1^{-\frac{1}{p(x_i)}},M_1^{-\frac{1}{q(x_i)}}\right\} \max\left\{\mu_i^{\frac{1}{p(x_i)}},\mu_i^{\frac{1}{q(x_i)}}\right\},
\end{multline}
where $\mu_i:=\lim_{\epsilon\to 0^+}\mu(B_\epsilon(x_i)$. We have
\begin{align*}
	\notag &\big|\phi_{i,\epsilon} u_{n}\big|_{L^{\alpha(\cdot)}(V,\R^N)}\leq\max\left\{|V|_{L^\infty(B_1(x_i))}^{-\frac{1}{\alpha^+}}\,|V|_{L^\infty(B_1(x_i))}^{-\frac{1}{\alpha^-}}\right\}|u_{n}|_{L^{\alpha(\cdot)}(B_\epsilon(x_i))}
\end{align*}
for all $n\in\N$ and all $\epsilon\in (0,1)$. Thus, by invoking Proposition~\ref{W} we obtain
\begin{equation}\label{P_CCP.est4}
	\limsup_{\epsilon\to 0^+}	\limsup_{n\to\infty} \big|\phi_{i,\epsilon} u_{n}\big|_{L^{p(\cdot)}(V,\R^N)}=0.
\end{equation}
Now we analyze the last term in \eqref{P_CCP.est1} to get \eqref{CCP.nu_mu}. First we have
	\begin{align*}
		\notag\big||u_n\nabla\phi_{i,\epsilon}|&\big|_{L^{p(\cdot)}(\R^N)+L^{q(\cdot)}(\R^N)}\\
		&\leq \big||u_n\nabla\phi_{i,\epsilon}\chi_{\{|u_n\nabla\phi_{i,\epsilon}|\geq 1\}}|\big|_{L^{p(\cdot)}(\R^N)}+\big||u_n\nabla\phi_{i,\epsilon}\chi_{\{|u_n\nabla\phi_{i,\epsilon}|\leq 1\}}|\big|_{L^{q(\cdot)}(\R^N)}.
	\end{align*}
	That is,
	\begin{multline}\label{P_CCP.est5*}
		\big||u_n\nabla\phi_{i,\epsilon}|\big|_{L^{p(\cdot)}(\R^N)+L^{q(\cdot)}(\R^N)}\\
		\leq \big||u_n\nabla\phi_{i,\epsilon}\chi_{\{|u_n\nabla\phi_{i,\epsilon}|\geq 1\}}|\big|_{L^{p(\cdot)}(B_\epsilon(x_i))}+\big||u_n\nabla\phi_{i,\epsilon}\chi_{\{|u_n\nabla\phi_{i,\epsilon}|\leq 1\}}|\big|_{L^{q(\cdot)}(B_\epsilon(x_i))}.
	\end{multline}
	By Proposition~\ref{W} again, we have that $X\hookrightarrow\hookrightarrow L^{p(\cdot)}(B_\epsilon(x_i))$ and $X\hookrightarrow\hookrightarrow L^{q(\cdot)}(B_\epsilon(x_i)).$ Hence,
	\begin{equation}\label{P_CCP.est5'}
		\underset{{n}\to\infty}{\lim\sup}\big||u_n\nabla\phi_{i,\epsilon}\chi_{\{|u_n\nabla\phi_{i,\epsilon}|\geq 1\}}|\big|_{L^{p(\cdot)}(B_\epsilon(x_i))}=\big||u\nabla\phi_{i,\epsilon}\chi_{\{|u\nabla\phi_{i,\epsilon}|\geq 1\}}|\big|_{L^{p(\cdot)}(B_\epsilon(x_i))}
	\end{equation}
	and
	\begin{equation}\label{P_CCP.est5''}
		\underset{{n}\to\infty}{\lim\sup}\big||u_n\nabla\phi_{i,\epsilon}\chi_{\{|u_n\nabla\phi_{i,\epsilon}|\leq 1\}}|\big|_{L^{q(\cdot)}(B_\epsilon(x_i))}=\big||u\nabla\phi_{i,\epsilon}\chi_{\{|u\nabla\phi_{i,\epsilon}|\leq 1\}}|\big|_{L^{q(\cdot)}(B_\epsilon(x_i))}.
	\end{equation}
Also we note that
	\begin{multline}\label{P_CCP.est5'''}
	\max\left\{\int_{B_\epsilon(x_i)}\big|u\nabla\phi_{i,\epsilon}\chi_{\{|u\nabla\phi_{i,\epsilon}|\geq 1\}}\big|^{p(x)}\diff x, 	\int_{B_\epsilon(x_i)}\big|u\nabla\phi_{i,\epsilon}\chi_{\{|u\nabla\phi_{i,\epsilon}|\leq 1\}}\big|^{q(x)}\diff x\right\}\\
		\leq	\int_{B_\epsilon(x_i)}\big|u\nabla\phi_{i,\epsilon}\big|^{p(x)}\diff x.
	\end{multline}
	Applying Proposition~\ref{Holder}, using the fact $u\in L^{p^\ast(\cdot)}(\mathbb{R}^N)$ (see Proposition~\ref{X-Imbeddings}) we obtain
	\begin{align*}
		\notag\int_{B_\epsilon(x_i)}\big|u\nabla\phi_{i,\epsilon}\big|^{p(x)}\diff x\leq 2\big||u|^{p(\cdot)}\big|_{L^{\frac{p^\ast(\cdot)}{p(\cdot)}}(B_\epsilon(x_i))}\big||\nabla\phi_{i,\epsilon}|^{p(\cdot)}\big|_{L^{\frac{N}{p(\cdot)}}(B_\epsilon(x_i))}\leq 2c \big||u|^{p(\cdot)}\big|_{L^{\frac{p^\ast(\cdot)}{p(\cdot)}}(B_\epsilon(x_i))},
	\end{align*}
	where $c$ is a positive constant. Here in view of Proposition \ref{norm-modular} we have used the fact that
	$$\big||\nabla\phi_{i,\epsilon}|^{p(\cdot)}\big|_{L^{\frac{N}{p(\cdot)}}(B_\epsilon(x_i))}\leq \left(1+\int_{B_\epsilon(x_i)}|\nabla\phi_{i,\epsilon}|^N\diff x \right)^{\frac{p^+}{N}}\leq \text{constant},\ \forall \epsilon>0.$$
	Hence, the estimate \eqref{P_CCP.est5'''} infers
	$$\lim_{\epsilon\to 0^+}\int_{B_\epsilon(x_i)}\big|u\nabla\phi_{i,\epsilon}\chi_{\{|u\nabla\phi_{i,\epsilon}|\geq 1\}}\big|^{p(x)}\diff x=0$$
	and
	$$\lim_{\epsilon\to 0^+}\int_{B_\epsilon(x_i)}\big|u\nabla\phi_{i,\epsilon}\chi_{\{|u\nabla\phi_{i,\epsilon}|\leq 1\}}\big|^{q(x)}\diff x=0.$$
	Hence, one has
	\begin{equation*}
		\lim_{\epsilon\to 0^+}\big||u\nabla\phi_{i,\epsilon}\chi_{\{|u\nabla\phi_{i,\epsilon}|\geq 1\}}|\big|_{L^{p(\cdot)}(B_\epsilon(x_i))}=	\lim_{\epsilon\to 0^+}\big||u\nabla\phi_{i,\epsilon}\chi_{\{|u\nabla\phi_{i,\epsilon}|\leq 1\}}|\big|_{L^{q(\cdot)}(B_\epsilon(x_i))}=0.
	\end{equation*}
	From this and \eqref{P_CCP.est5'}--\eqref{P_CCP.est5''} we deduce from \eqref{P_CCP.est5*} that
	\begin{equation}\label{P_CCP.est5}
		\lim_{\epsilon\to 0^+}\underset{n\to\infty}{\lim\sup}\big||u_n\nabla\phi_{i,\epsilon}|\big|_{L^{p(\cdot)}(\R^N)+L^{q(\cdot)}(\R^N)}=0.
	\end{equation}
	Utilizing \eqref{P_CCP.est1}, \eqref{P_CCP.est2}, \eqref{P_CCP.est3}, \eqref{P_CCP.est4} and \eqref{P_CCP.est5} we obtain \eqref{CCP.nu_mu}.

	Finally, to show \eqref{CCP.form.mu}, note that for any $\phi\in C(\R^N)$  with $\phi\geq 0$, the functional $u\mapsto \Phi(u):=\int_{\R^N}\phi(x)\big(A(x,\nabla u) + V(x)|u|^{\alpha(x)}\big)\diff x$ is G\^ateaux differentiable on $X$ due to $(A1)$ with
	$$\langle \Phi'(u),v\rangle=\int_{\R^N}\phi(x)a(x,\nabla u)\cdot \nabla v\diff x+\int_{\R^N}\phi(x)\alpha(x)V(x)|u|^{\alpha(x)-2}uv\diff x,\quad \forall u,v\in X.$$
	Here and in the sequel, by $\langle \cdot,\cdot\rangle$ we denote the duality pairing between $X$ and its dual $X^*$. By $(A6)$, $\Phi': X\to X^*$ is strictly monotone and hence, $\Phi$ is weakly lower semicontinuous and therefore,
	\begin{align*}
\int_{\R^N}\phi(x)\big(A(x,\nabla u) + V(x)|u|^{\alpha(x)}\big)\diff x\leq \liminf_{n\to  \infty } \int_{\R^N }\phi(x)\big(A(x,\nabla u_n) + V(x)|u_{n}|^{\alpha(x)}\big)\diff x=\int_{\R^N}\phi \diff\mu.
\end{align*}
	Thus, $\mu\geq A(x,\nabla u) + V(x)|u|^{\alpha(x)}$. By extracting $\mu$ to its atoms, we deduce \eqref{CCP.form.mu} and the proof is complete.
\end{proof}

\begin{proof}[\textbf{Proof of Theorem~\ref{T.CCP.infinity}}]
	Let $\phi$ be in $C^\infty(\mathbb{R})$ such that  $\phi(t)\equiv 0$ on $|t|\leq 1,$ $\phi(t)\equiv 1$ on $|t|\geq 2$ and $0\leq \phi\leq 1,$ $|\phi'|\leq 2$ in $\mathbb{R}.$ For each $R>0$, set $\phi_R(x):=\phi(|x|/R)$ for all $x\in\mathbb{R}^N.$ We then decompose
	\begin{align}\label{P_CCP2.dec}
		\int_{ \mathbb{R}^N}\big[A(x,\nabla u_n)&+V(x)|u_n|^{\alpha(x)}\big]\diff x=\int_{ \mathbb{R}^N}\left[A(x,\nabla u_n)+V(x)|u_n|^{\alpha(x)}\right]\phi_R\diff x\notag\\
		&+\int_{ \mathbb{R}^N}\left[A(x,\nabla u_n)+V(x)|u_n|^{\alpha(x)}\right](1-\phi_R)\diff x.
	\end{align}
	For the first term of the right-hand side of \eqref{P_CCP2.dec} we notice that
	\begin{align*}
		\int_{\{|x|>2R\}}\big[A(x,\nabla u_n)&+V(x)|u_n|^{\alpha(x)}\big]\diff x\\
		&\leq \int_{ \mathbb{R}^N}\big[A(x,\nabla u_n)+V(x)|u_n|^{\alpha(x)}\big]\phi_R\diff x\\
		&\quad\quad\quad\leq \int_{\{|x|>R\}}\left[A(x,\nabla u_n)+V(x)|u_n|^{\alpha(x)}\right]\diff x.
	\end{align*}
	Thus we obtain
	\begin{equation}\label{P_CCP2.mu_inf}
		\mu_\infty=\lim_{R\to\infty}\underset{n\to\infty}{\lim\sup}\int_{\mathbb{R}^N}\left[A(x,\nabla u_n)+V(x)|u_n|^{\alpha(x)}\right]\phi_R\diff x.
	\end{equation}
	For the second term of the right-hand side of \eqref{P_CCP2.dec}, we notice that $1-\phi_R$ is a continuous function with compact support in $\mathbb{R}^N$. Hence,
	\begin{equation}\label{P_CCP2.phiR}
		\lim_{n\to\infty}\int_{\mathbb{R}^N}\left[A(x,\nabla u_n)+V(x)|u_n|^{\alpha(x)}\right](1-\phi_R)\diff x=\int_{\mathbb{R}^N}(1-\phi_R)\diff\mu.
	\end{equation}
	Clearly, $\lim_{R\to\infty}\int_{\mathbb{R}^N}\phi_R\diff\mu=0$ in view of the Lebesgue dominated convergence theorem. By this and \eqref{P_CCP2.phiR}, we deduce
	\begin{equation*}
		\lim_{R\to\infty}\lim_{n\to \infty }\int_{\mathbb{R}^N}\left[A(x,\nabla u_n)+V(x)|u_n|^{\alpha(x)}\right](1-\phi_R)\diff x=\mu(\mathbb{R}^N).
	\end{equation*}
	Using this and \eqref{P_CCP2.mu_inf}, we obtain \eqref{CCP2.mu} by taking the limit superior as $n\to\infty$ and then letting $R\to\infty$ in \eqref{P_CCP2.dec}. In the same fashion, to obtain \eqref{CCP2.nu} we decompose
	\begin{equation}\label{P_CCP2.dec2}
		\int_{ \mathbb{R}^N}b(x)|u_n|^{t(x)}\diff x=\int_{ \mathbb{R}^N}b(x)|u_n|^{t(x)}\phi_R\diff x+\int_{ \mathbb{R}^N}b(x)|u_n|^{t(x)}(1-\phi_R)\diff x.
	\end{equation}
	Arguing as above, we obtain
	\begin{equation}\label{P_CCP2.nu_inf}
		\nu_\infty=\lim_{R\to\infty}\underset{n\to\infty}{\lim\sup}\int_{\mathbb{R}^N}b(x)|u_n|^{t(x)}\phi_R\diff x,
	\end{equation}
	and using \eqref{P_CCP2.nu_inf}, we easily obtain \eqref{CCP2.nu} from \eqref{P_CCP2.dec2}. Moreover, by replacing $\phi_R$ with $\phi_R^{t(x)}$ in the above aguments we also have
	\begin{equation}\label{P_CCP2.nu_inf'}
		\nu_\infty=\lim_{R\to\infty}\underset{n\to\infty}{\lim\sup}\int_{\mathbb{R}^N}b(x)|u_n|^{t(x)}\phi_R^{t(x)}\diff x.
	\end{equation}

	Next, we prove \eqref{CCP2.nu_mu} when $(\mathcal{E}_\infty)$ is additionally assumed.  It is easy to see that $\phi_Rv\in X$ for all $R>0$ and all $v\in X$. Thus by \eqref{Sb}, we have
	\begin{multline}\label{P_CCP2.est1}
		 S_b|\phi_R u_n|_{L^{t(\cdot)}(b,\R^N)} \leq \big||\nabla(\phi_R u_n)|\big|_{L^{p(\cdot)}(\R^N)+L^{q(\cdot)}(\R^N)}+|\phi_R u_n|_{L^{\alpha(\cdot)}(V,\R^N)}\\
		\leq \big||\phi_R\nabla u_n|\big|_{L^{p(\cdot)}(\R^N)+L^{q(\cdot)}(\R^N)}+|\phi_R u_n|_{L^{\alpha(\cdot)}(V,\R^N)}+\big||u_n\nabla\phi_R|\big|_{L^{p(\cdot)}(\R^N)+L^{q(\cdot)}(\R^N)}.
	\end{multline}
Let $\epsilon$ be arbitrary in $(0,1).$ By $(\mathcal{E}_\infty)$, there exists $R_0=R_0(\epsilon)>0$ such that
	\begin{equation}\label{P_CCP2.p,t_inf}
		|p(x)-p_\infty|<\epsilon,\ |q(x)-q_\infty|<\epsilon,\ |\alpha(x)-\alpha_\infty|<\epsilon,\ |t(x)-t_\infty|<\epsilon,\quad \forall |x|>R_0.
	\end{equation}
	For $R>R_0$ given, let $\{u_{n_k}\}_{k\in\N}$ be a subsequence of $\{u_n\}_{n\in\N}$ such that
	\begin{equation}\label{P_CCP2.limsup}
		\lim_{k\to \infty }\int_{B_R^c}b(x)|u_{n_k}|^{t(x)}\phi_R^{t(x)}\diff x=\underset{n\to\infty}{\lim\sup}\int_{B_R^c}b(x)|u_n|^{t(x)}\phi_R^{t(x)}\diff x.
	\end{equation}
	Using Proposition~\ref{norm-modular} with \eqref{P_CCP2.p,t_inf}, we have
	\begin{equation*}
		|\phi_R u_{n_k}|_{L^{t(\cdot)}(b,B_{R}^c)}
		\geq\min\left\{\left(\int_{B_R^c}b(x)|u_{n_k}|^{t(x)}\phi_R^{t(x)}\diff x\right)^{\frac{1}{t_\infty+\epsilon}},\left(\int_{B_R^c}b(x)|u_{n_k}|^{t(x)}\phi_R^{t(x)}\diff x\right)^{\frac{1}{t_\infty-\epsilon}}\right\}.
	\end{equation*}
	From this, \eqref{P_CCP2.nu_inf'} and \eqref{P_CCP2.limsup} we deduce
	\begin{equation}\label{P_CCP2.est2}
		\lim_{R\to\infty}\underset{{k}\to\infty}{\lim\sup}|\phi_R u_{n_k}|_{L^{t(\cdot)}(b,B_{R}^c)}\geq \min\biggl\{\nu_\infty^{\frac{1}{t_\infty+\epsilon}} , \nu_\infty^{\frac{1}{t_\infty-\epsilon}} \biggr\}.
	\end{equation}
	On the other hand, arguing as that obtained \eqref{P_CCP.lambda_n-E} again with noticing $\supp (|\nabla u_{n_k}|)\subset B_R^c$ we get that
\begin{align}\label{P_CCP2.Grad1}
	&\big||\phi_R\nabla u_{n_k}|\big|_{L^{p(\cdot)}(B_R^c)+L^{q(\cdot)}(B_R^c)}\nonumber\\
&\qquad\qquad\leq \big||\phi_R\nabla u_{n_k}\chi_{\{|\nabla u_{n_k}|\geq 1\}}|\big|_{L^{p(\cdot)}(B_R^c)}+\big||\phi_R\nabla u_{n_k}\chi_{\{|\nabla u_{n_k}|\leq 1\}}|\big|_{L^{q(\cdot)}(B_R^c)}.
\end{align}
Invoking Proposition~\ref{norm-modular} and taking  into account \eqref{P_CCP2.p,t_inf} again, we have
	\begin{align}\label{P_CCP2.Grad2}
		\notag&\big||\phi_R\nabla u_{n_k}\chi_{\{|\nabla u_{n_k}|\geq 1\}}|\big|_{L^{p(\cdot)}(B_R^c)}\\
		\notag&\leq\max\left\{\left(\int_{B_R^c}|\nabla u_{n_k}|^{p(x)}\chi_{\{|\nabla u_{n_k}|\geq 1\}}\phi_R^{p(x)}\diff x\right)^{\frac{1}{p_\infty+\epsilon}},\left(\int_{B_R^c}|\nabla u_{n_k}|^{p(x)}\chi_{\{|\nabla u_{n_k}|\geq 1\}}\phi_R^{p(x)}\diff x\right)^{\frac{1}{p_\infty-\epsilon}}\right\} \\
	\notag	&\leq\max\left\{\left(\int_{B_R^c}M_1^{-1}A(x,\nabla u_{n_k})\phi_R\diff x\right)^{\frac{1}{p_\infty+\epsilon}},\left(\int_{B_R^c}M_1^{-1}A(x,\nabla u_{n_k})\phi_R\diff x\right)^{\frac{1}{p_\infty-\epsilon}}\right\} \\
	\notag	\notag&\leq\max\left\{M_1^{-\frac{1}{p_\infty+\epsilon}},M_1^{-\frac{1}{p_\infty-\epsilon}}\right\}\max\biggl\{\left(\int_{B_R^c}\left[A(x,\nabla u_{n_k})+V(x)|u_{n_k}|^{\alpha(x)}\right]\phi_R\diff x\right)^{\frac{1}{p_\infty+\epsilon}},\\
		&{}\hspace*{6cm}\left(\int_{B_R^c}\left[A(x,\nabla u_{n_k})+V(x)|u_{n_k}|^{\alpha(x)}\right]\phi_R\diff x\right)^{\frac{1}{p_\infty-\epsilon}}\biggr\}.
	\end{align}
Similarly we have
	\begin{align}\label{P_CCP2.Grad3}
	\notag&\big||\phi_R\nabla u_{n_k}\chi_{\{|\nabla u_{n_k}|\leq 1\}}|\big|_{L^{q(\cdot)}(B_R^c)}\\
\notag	&\leq\max\left\{\left(\int_{B_R^c}|\nabla u_{n_k}|^{q(x)}\phi_R^{q(x)}\chi_{\{|\nabla u_{n_k}|\leq 1\}}\diff x\right)^{\frac{1}{q_\infty+\epsilon}},\left(\int_{B_R^c}|\nabla u_{n_k}|^{q(x)}\phi_R^{q(x)}\chi_{\{|\nabla u_{n_k}|\leq 1\}}\diff x\right)^{\frac{1}{q_\infty-\epsilon}}\right\} \\
\notag	&\leq\max\left\{M_1^{-\frac{1}{q_\infty+\epsilon}},M_1^{-\frac{1}{q_\infty-\epsilon}}\right\}\max\biggl\{\left(\int_{B_R^c}\left[A(x,\nabla u_{n_k})+V(x)|u_{n_k}|^{\alpha(x)}\right]\phi_R\diff x\right)^{\frac{1}{q_\infty+\epsilon}},\\
	&{}\hspace*{6cm}\left(\int_{B_R^c}\left[A(x,\nabla u_{n_k})+V(x)|u_{n_k}|^{\alpha(x)}\right]\phi_R\diff x\right)^{\frac{1}{q_\infty-\epsilon}}\biggr\}
\end{align}
and
\begin{align}\label{P_CCP2.Grad4}
	\notag \big|\phi_R& u_{n_k}\big|_{L^{\alpha(\cdot)}(V,B_{R}^c)}\\
\notag	&\leq\max\biggl\{\left(\int_{B_R^c}\left[A(x,\nabla u_{n_k})+V(x)|u_{n_k}|^{\alpha(x)}\right]\phi_R\diff x\right)^{\frac{1}{\alpha_\infty+\epsilon}},\\
	&{}\hspace*{5cm}\left(\int_{B_R^c}\left[A(x,\nabla u_{n_k})+V(x)|u_{n_k}|^{\alpha(x)}\right]\phi_R\diff x\right)^{\frac{1}{\alpha_\infty-\epsilon}}\biggr\}.
\end{align}
From \eqref{P_CCP2.Grad1}--\eqref{P_CCP2.Grad4} and \eqref{P_CCP2.mu_inf} we obtain
\begin{multline}\label{P_CCP2.est3}
\lim_{R\to\infty}\underset{{k}\to\infty}{\lim\sup}\left[\big||\phi_R\nabla u_{n_k}|\big|_{L^{p(\cdot)}(B_R^c)+L^{q(\cdot)}(B_R^c)}+\big|\phi_R u_{n_k}\big|_{L^{\alpha(\cdot)}(V,B_{R}^c)}\right]\\
\leq\max\left\{1, M_1^{-\frac{1}{p_\infty-\epsilon}}\right\} \max\biggl\{\mu_\infty^{\frac{1}{p_\infty+\epsilon}}+\mu_\infty^{\frac{1}{q_\infty+\epsilon}}+ \mu_\infty^{\frac{1}{\alpha_\infty+\epsilon}}, \mu_\infty^{\frac{1}{p_\infty-\epsilon}}+\mu_\infty^{\frac{1}{q_\infty-\epsilon}}+\mu_\infty^{\frac{1}{\alpha_\infty-\epsilon}} \biggr\}.
\end{multline}
Now we analyze the last term in \eqref{P_CCP2.est1} to show \eqref{CCP2.nu_mu}. First we have
	\begin{align*}
		\notag\big||u_n\nabla\phi_R|&\big|_{L^{p(\cdot)}(\R^N)+L^{q(\cdot)}(\R^N)}\\
		&\leq \big||u_n\nabla\phi_R\chi_{\{|u_n\nabla\phi_R|\geq 1\}}|\big|_{L^{p(\cdot)}(\R^N)}+\big||u_n\nabla\phi_R\chi_{\{|u_n\nabla\phi_R|\leq 1\}}|\big|_{L^{q(\cdot)}(\R^N)}.
	\end{align*}
That is,
\begin{multline}\label{P_CCP2.est4}
\big||u_n\nabla\phi_R|\big|_{L^{p(\cdot)}(\R^N)+L^{q(\cdot)}(\R^N)}\\
\leq \big||u_n\nabla\phi_R\chi_{\{|u_n\nabla\phi_R|\geq 1\}}|\big|_{L^{p(\cdot)}(B_{2R}\setminus \overline{B_{R}})}+\big||u_n\nabla\phi_R\chi_{\{|u_n\nabla\phi_R|\leq 1\}}|\big|_{L^{q(\cdot)}(B_{2R}\setminus \overline{B_{R}})}.
\end{multline}
By Proposition~\ref{W}, we have that $X\hookrightarrow\hookrightarrow L^{p(\cdot)}(B_{2R}\setminus \overline{B_R})$ and $X\hookrightarrow\hookrightarrow L^{q(\cdot)}(B_{2R}\setminus \overline{B_R}).$ Hence, one has
	\begin{equation}\label{P_CCP2.est4'}
		\underset{{n}\to\infty}{\lim\sup}\big||u_n\nabla\phi_R\chi_{\{|u_n\nabla\phi_R|\geq 1\}}|\big|_{L^{p(\cdot)}(B_{2R}\setminus \overline{B_{R}})}=\big||u\nabla\phi_R\chi_{\{|u\nabla\phi_R|\geq 1\}}|\big|_{L^{p(\cdot)}(B_{2R}\setminus \overline{B_{R}})}
	\end{equation}
and
\begin{equation}\label{P_CCP2.est4''}
	\underset{{n}\to\infty}{\lim\sup}\big||u_n\nabla\phi_R\chi_{\{|u_n\nabla\phi_R|\leq 1\}}|\big|_{L^{q(\cdot)}(B_{2R}\setminus \overline{B_{R}})}=\big||u\nabla\phi_R\chi_{\{|u\nabla\phi_R|\leq 1\}}|\big|_{L^{q(\cdot)}(B_{2R}\setminus \overline{B_{R}})}.
\end{equation}
	Also we note that
	\begin{multline}\label{P_CCP2.est5}
	\max\left\{\int_{B_{2R}\setminus \overline{B_{R}}}\big|u\nabla\phi_R\chi_{\{|u\nabla\phi_R|\geq 1\}}\big|^{p(x)}\diff x, 	\int_{B_{2R}\setminus \overline{B_{R}}}\big|u\nabla\phi_R\chi_{\{|u\nabla\phi_R|\leq 1\}}\big|^{q(x)}\diff x\right\}\\
	\leq	\int_{B_{2R}\setminus \overline{B_{R}}}\big|u\nabla\phi_R\big|^{p(x)}\diff x.
	\end{multline}
Arguing as that leads to \eqref{P_CCP.est5} we obtain
	\begin{align}\label{P_CCP2.est5'}
	\notag\lim_{R\to\infty}\int_{B_{2R}\setminus \overline{B_{R}}}\big|u\nabla\phi_R\big|^{p(x)}\diff x=0.
\end{align}
Hence, the estimate \eqref{P_CCP2.est5} infers
	$$\lim_{R\to\infty}\int_{B_{2R}\setminus \overline{B_{R}}}\big|u\nabla\phi_R\chi_{\{|u\nabla\phi_R|\geq 1\}}\big|^{p(x)}\diff x=0$$
	and
		$$\lim_{R\to\infty}\int_{B_{2R}\setminus \overline{B_{R}}}\big|u\nabla\phi_R\chi_{\{|u\nabla\phi_R|\leq 1\}}\big|^{q(x)}\diff x=0.$$
		Equivalently,
	\begin{equation*}
		\lim_{R\to\infty}\big||u\nabla\phi_R\chi_{\{|u\nabla\phi_R|\geq 1\}}|\big|_{L^{p(\cdot)}(B_{2R}\setminus \overline{B_{R}})}=	\lim_{R\to\infty}\big||u\nabla\phi_R\chi_{\{|u\nabla\phi_R|\leq 1\}}|\big|_{L^{q(\cdot)}(B_{2R}\setminus \overline{B_{R}})}=0.
	\end{equation*}
From this \eqref{P_CCP2.est4'}--\eqref{P_CCP2.est4''} we deduce from \eqref{P_CCP2.est4} that
\begin{equation}\label{P_CCP2.est4}	\lim_{R\to\infty}\underset{{n}\to\infty}{\lim\sup}\big||u_n\nabla\phi_R|\big|_{L^{p(\cdot)}(\R^N)+L^{q(\cdot)}(\R^N)}=0.
\end{equation}

Utilizing \eqref{P_CCP2.est1}, \eqref{P_CCP2.est2}, \eqref{P_CCP2.est3} and \eqref{P_CCP2.est4} we arrive at
	\begin{multline*}
		S_b\min\biggl\{\nu_\infty^{\frac{1}{t_\infty+\epsilon}} , \nu_\infty^{\frac{1}{t_\infty-\epsilon}} \biggr\}\\
		\leq \max\left\{1, M_1^{-\frac{1}{p_\infty-\epsilon}}\right\} \max\biggl\{\mu_\infty^{\frac{1}{p_\infty+\epsilon}}+\mu_\infty^{\frac{1}{q_\infty+\epsilon}}+ \mu_\infty^{\frac{1}{\alpha_\infty+\epsilon}}, \mu_\infty^{\frac{1}{p_\infty-\epsilon}}+\mu_\infty^{\frac{1}{q_\infty-\epsilon}}+\mu_\infty^{\frac{1}{\alpha_\infty-\epsilon}} \biggr\}.
	\end{multline*}
	Letting $\epsilon\to 0^+$ in the last inequality, we obtain \eqref{CCP2.nu_mu}.
\end{proof}

\section{application}\label{Application}

\smallskip
As an application of Theorems \ref{T.CCP} and \ref{T.CCP.infinity}, in this section we will obtain the existence of a nontrivial nonnegative solution and infinitely many solutions for problem \eqref{Prob} when the reaction term is of generalized concave-convex type.

 Throughout this section, let $(A1)$--$(A6)$, $(P1)$, $(V1)$, $(V2)$ and $(\mathcal{E}_\infty)$ hold. Furthermore, $r \in C_+^{0,1}(\mathbb{R}^N)$  with  $r(\cdot) \ll p^\ast(\cdot) $ we denote $$A_r:=\cup_{\theta\in E_r}L_+^{\theta(\cdot)}(\mathbb{R}^N)$$
 with $E_r:=\left\{\theta\in C_+(\R^N): \alpha(\cdot)\leq  \theta'(\cdot)r(\cdot)\leq p^*(\cdot) \ \text{in} \ \R^N\right\}$, and when $(V3)$ and $\alpha(\cdot)\leq r(\cdot)$ in $\R^N$ are additionally assumed, we denote $$A_r:=\cup_{\theta\in E_r}L_+^{\theta(\cdot)}(\mathbb{R}^N)\cup L_+^\infty(\R^N).$$
 Then by Propositions~\ref{X-Imbeddings} and \ref{IV.compact} we have that
	\begin{equation}\label{IV.CompactImb}
		X \hookrightarrow \hookrightarrow L^{r(\cdot) }(\varrho,\mathbb{R}^N ) \ \ \text{if}\ \ \varrho\in A_r.
	\end{equation}
Assume in addition that
\begin{itemize}
		\item[$(P2)$] $q_\alpha^+<s^+<t^-$, where $s$ and $q_\alpha$ are given in $(A5)$ and \eqref{Def.p_alpha}, respectively.
		\item[$(\mathcal{M})$] $M: [0, \infty) \to \R$ is a real function such that $M$ is continuous and nondecreasing on an interval $[0,\tau_0]$ for some $\tau_0 >0$ and $M(0)>0$.
		\item [$(\mathcal{F}_{1})$] There exist $r_i \in C_+^{0,1}(\mathbb{R}^N)$  with  $r_i(\cdot) \ll p^\ast(\cdot) $ and $\varrho_i\in A_{r_i}$ ($i=1,\cdots,m$) such that
		\begin{equation*}
			|f(x,\tau)|\le \sum_{i=1}^{m} \varrho_i(x)|\tau|^{r_i(x) - 1}\quad \text{for a.e. }x\in \mathbb{R}^N \text{ and all } \tau\in \mathbb{R}.
		\end{equation*}
		
		\item [$(\mathcal{F}_{2})$] There exist $r \in C_+^{0,1}(\mathbb{R}^N)$ with $r^{+}<p_\alpha^{-}$,  $\varrho\in A_r$, $\beta\in (s^+,t^-)$, and $C_i\  (i=1,\cdots,6$) such that
		\begin{equation*}
			C_1\varrho(x)|\tau|^{r(x)}-C_2b(x)|\tau|^{t(x)}\le \beta F(x,\tau) \le f(x,\tau)\tau + C_3\varrho(x)|\tau|^{r(x)}+C_4b(x)|\tau|^{t(x)}
		\end{equation*}
		and
		\begin{equation*}
			F(x,\tau)\le C_5\varrho(x)|\tau|^{r(x)}+C_6b(x)|\tau|^{t(x)}
		\end{equation*}
		for a.e. $x\in \R^N$ and all $\tau \in \mathbb{R}$, where $F(x,\tau):= \int_{0}^{\tau}f(x,s)\diff s$ and $p_\alpha$ is given by \eqref{Def.p_alpha}.
		\end{itemize}

	\smallskip
	A typical example for $f$ fulfilling $(\mathcal{F}_1)$--$(\mathcal{F}_2)$ is a $p(\cdot)$-sublinear term $f(x,t)=\varrho(x)|t|^{r(x)-2}t$ with $r \in C_+^{0,1}(\mathbb{R}^N)$, $r^{+}<p_\alpha^{-}$, and  $\varrho\in A_r$. Furthermore,  if $(V3)$ is additionally assumed, then an interesting another example is $f(x,t)= c_1|t|^{r(x)-2}t+c_2b(x)|t|^{m(x)-2}t\log^\kappa (e+|t|)$ with $c_1>0$, $c_2\geq 0$, $\kappa\geq 0$ and $r,m\in C_+^{0,1}(\mathbb{R}^N)$ satisfying $r^+<p_\alpha^-$ and $p_\alpha(\cdot)\leq m(\cdot)\ll t(\cdot)$.
	
		\smallskip
By a solution of problem~\eqref{Prob}, we mean a function $u\in X$ such that
\begin{multline}
\label{Def.w-sol}
M\left(\int_{\R^N}A(x,\nabla u)\,\diff x\right)\int_{\R^N}a(x,\nabla u)\cdot\nabla v\,\diff x+\int_{\R^N}V(x)|u|^{\alpha(x)-2}u v\diff x\\-\lambda\int_{\R^N} f(x,u)v\,\diff x-\int_{\R^N} b(x)|u|^{t(x)-2}uv\,\diff x=0,\quad \forall v\in X.
\end{multline}
This definition is clearly well defined under above assumptions thanks to the aforementioned imbeddings on $X$.

Our first existence result is the next theorem.
\begin{theorem}[A nontrivial nonnegative solutions for the generalized concave-convex type problem]\label{Theo.Sub1}
	Let $(A1)$--$(A6)$, $(P1)$, $(P2)$, $(V1)$, $(V2)$ and $(\mathcal{E}_\infty)$ hold.  Let $(\mathcal{M})$, $(\mathcal{F}_1)$ and $(\mathcal{F}_2)$ hold with  $\varrho^{\frac{t}{t-r}}b^{-\frac{r}{t-r}}\in L^1(\mathbb{R}^N).$  Then, there exists $\la_{\star}>0$ such that for any $\la\in(0,\la_{\star})$, problem \eqref{Prob} admits a nontrivial nonnegative solution $u_\lambda$. Furthermore, it holds that
	\[
	\lim_{\la\to0^{+}}\|u_{\la}\|=0.
	\]
\end{theorem}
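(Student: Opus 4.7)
The plan is to apply the Ekeland variational principle in a small closed ball, with the concentration--compactness machinery of Section~\ref{CCP} used to absorb the critical growth once the energy level has been made small by taking $\lambda$ small. I associate to \eqref{Prob} the truncated energy functional
\begin{equation*}
J_\lambda(u) := \widehat M\!\left(\int_{\R^N} A(x,\nabla u)\,\diff x\right) + \int_{\R^N}\frac{V(x)}{\alpha(x)}|u|^{\alpha(x)}\,\diff x - \lambda\int_{\R^N} F(x,u_+)\,\diff x - \int_{\R^N}\frac{b(x)}{t(x)}(u_+)^{t(x)}\,\diff x,
\end{equation*}
where $\widehat M(s):=\int_0^s M(\tau)\,\diff\tau$ and $u_+:=\max\{u,0\}$. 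Assumptions $(A1)$--$(A3)$, $(\mathcal{M})$ and $(\mathcal{F}_1)$ together with Propositions~\ref{X-Imbeddings}--\ref{IV.compact} make $J_\lambda$ of class $C^1$ on a neighborhood of $0$ in $X$, and any critical point of $J_\lambda$ is automatically a nonnegative weak solution of \eqref{Prob}.

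Next I verify the geometric hypotheses for Ekeland. The integrability assumption $\varrho^{t/(t-r)}b^{-r/(t-r)}\in L^1(\R^N)$, combined with a weighted H\"older inequality and the embedding $X\hookrightarrow L^{t(\cdot)}(b,\R^N)$, yields $\int_{\R^N}\varrho|u|^{r(x)}\diff x\le C\max\{\|u\|^{r^-},\|u\|^{r^+}\}$. Together with the upper bound in $(\mathcal{F}_2)$, the estimate \eqref{Est.A} and $(\mathcal{M})$, this gives, for $\|u\|\le 1$,
\begin{equation*}
J_\lambda(u) \ge c_1\|u\|^{q_\alpha^+} - c_2\lambda\|u\|^{r^-} - c_3\|u\|^{t^-}.
\end{equation*}
Since $r^+<p_\alpha^-\le q_\alpha^+<t^-$ by $(\mathcal{F}_2)$ and $(P2)$, I fix $\rho\in(0,1)$ small and $\lambda_\star>0$ so that $\inf_{\|u\|=\rho}J_\lambda(u)\ge\delta>0$ for all $\lambda\in(0,\lambda_\star)$. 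Picking a fixed nonnegative $\phi\in C_c^\infty(\R^N)$ with $\int\varrho\phi^{r(x)}\diff x>0$, the lower bound in $(\mathcal{F}_2)$ gives $J_\lambda(s\phi)<0$ for small $s>0$; hence $c_\lambda:=\inf_{\|u\|\le\rho}J_\lambda(u)\in(-\infty,0)$, and minimizing the right-hand side of the displayed inequality in $\|u\|$ also yields $c_\lambda\to 0^-$ as $\lambda\to 0^+$. Ekeland's principle then produces $\{u_n\}\subset\bar B_\rho$ with $J_\lambda(u_n)\to c_\lambda$ and $J_\lambda'(u_n)\to 0$ in $X^\ast$.

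The hard part is passing from weak to strong convergence. Extracting a subsequence, $u_n\rightharpoonup u_\lambda$ in $X$, and Theorems~\ref{T.CCP} and~\ref{T.CCP.infinity} deliver atoms $\{x_i\}_{i\in\mathcal{I}}\subset\mathcal{C}$ with weights $\mu_i,\nu_i$ together with possible infinity masses $\mu_\infty,\nu_\infty$ satisfying \eqref{CCP.nu_mu} and \eqref{CCP2.nu_mu}. Testing $J_\lambda'(u_n)$ against the cutoffs $\phi_{i,\epsilon}u_n$ and $\phi_R u_n$ used in the proofs of those theorems and letting $n\to\infty$ and then $\epsilon\to 0^+$ (respectively $R\to\infty$), the cross terms involving $\nabla\phi_{i,\epsilon}$ or $\nabla\phi_R$ vanish as in the proof of Theorem~\ref{T.CCP}, the subcritical concave contribution vanishes by Proposition~\ref{IV.compact}, and the lower bound $M\big(\int A(x,\nabla u_n)\diff x\big)\ge M(0)>0$ (valid on $\bar B_\rho$ for $\rho$ small enough that $\int A(x,\nabla u_n)\diff x\le\tau_0$) combined with $(A5)$ yields $M(0)\mu_i\le \nu_i$ and $M(0)\mu_\infty\le \nu_\infty$. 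Inserted into \eqref{CCP.nu_mu} and \eqref{CCP2.nu_mu}, together with $t(x_i)>\max\{p(x_i),q(x_i)\}$ on $\mathcal{C}$, this forces the dichotomy $\nu_i\in\{0\}\cup[\nu_\star,\infty)$, and likewise for $\nu_\infty$, where $\nu_\star>0$ depends only on $S_b$, $M_1$, $M(0)$ and the exponents at $x_i$. To rule out the large-mass alternative I use the identity
\begin{equation*}
c_\lambda = J_\lambda(u_n) - \tfrac{1}{\beta}\langle J_\lambda'(u_n),u_n\rangle + o(1)
\end{equation*}
with $\beta\in(s^+,t^-)$ from $(\mathcal{F}_2)$: $(A5)$ and $(\mathcal{M})$ render the gradient and potential contributions nonnegative, the concave term contributes at most $C\lambda\rho^{r^-}$, and the critical term contributes at least $\bigl(\tfrac{1}{s^+}-\tfrac{1}{\beta}\bigr)\bigl(\sum_{i}\nu_i+\nu_\infty\bigr)$. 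Since $c_\lambda\to 0^-$ as $\lambda\to 0^+$, shrinking $\lambda_\star$ further forces $\nu_i=\nu_\infty=0$ for every $i$.

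With concentration excluded, \eqref{CCP.form.nu} and \eqref{CCP2.nu} give $\int_{\R^N} b|u_n|^{t(x)}\diff x\to \int_{\R^N} b|u_\lambda|^{t(x)}\diff x$, while the subcritical terms converge by Proposition~\ref{IV.compact}. The strict convexity $(A6)$ combined with $M\big(\int A(x,\nabla u_n)\diff x\big)\ge M(0)>0$ then implements a standard $(S_+)$-type argument upgrading $u_n\rightharpoonup u_\lambda$ to $u_n\to u_\lambda$ in $X$. Passing to the limit yields $J_\lambda'(u_\lambda)=0$ and $J_\lambda(u_\lambda)=c_\lambda<0$, so $u_\lambda\not\equiv 0$; the truncation forces $u_\lambda\ge 0$. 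For the asymptotic, using $\langle J_\lambda'(u_\lambda),u_\lambda\rangle=0$, $(A5)$, $(\mathcal{F}_2)$ and $u_\lambda\in\bar B_\rho$, one obtains an inequality of the form $\|u_\lambda\|^{p_\alpha^-}\le C\lambda\|u_\lambda\|^{r^-}$, from which $\|u_\lambda\|\le C'\lambda^{1/(p_\alpha^--r^+)}\to 0$ as $\lambda\to 0^+$.
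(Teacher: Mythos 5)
Your proposal is essentially the paper's own strategy: Ekeland's variational principle on a small closed ball, the sphere/negativity geometry coming from $r^+<p_\alpha^-$ and $(P2)$, and the concentration--compactness Theorems~\ref{T.CCP} and \ref{T.CCP.infinity} to exclude atoms and mass at infinity at the (negative) Ekeland level for small $\lambda$, followed by an $(S_+)$-type upgrade to strong convergence. The two presentational differences are harmless and even slightly leaner: you replace the paper's explicit truncation $M_0$ of the Kirchhoff term by working in a ball on which $\int_{\R^N}A(x,\nabla u)\,\diff x$ stays small, and instead of the paper's quantitative Palais--Smale threshold $K_1-K_2\max\{\lambda^{l^+/(l^+-1)},\lambda^{l^-/(l^--1)}\}$ (Lemma~\ref{Le4.3}) you bound the concave contribution simply by $C\lambda\rho^{r^-}$ using the ball radius, which suffices because your level $c_\lambda$ is negative. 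You also supply an explicit decay estimate for $\|u_\lambda\|$, which the paper leaves implicit.

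One step, as written, would fail and needs the paper's device: in the identity $c_\lambda=J_\lambda(u_n)-\tfrac1\beta\langle J_\lambda'(u_n),u_n\rangle+o(1)$ you claim the gradient contribution is nonnegative from $(A5)$ and $(\mathcal M)$ alone on the set where $\int A(x,\nabla u_n)\,\diff x\le\tau_0$. But $(A5)$ only gives $\int a(x,\nabla u_n)\cdot\nabla u_n\,\diff x\le s^+\int A(x,\nabla u_n)\,\diff x$, and $\widehat M(\tau)\ge M(0)\tau$, so the contribution is bounded below by $\bigl(M(0)-\tfrac{s^+}{\beta}M(\tau)\bigr)\tau$ with $\tau=\int A(x,\nabla u_n)\,\diff x$; this is nonnegative only if $M(\tau)\le\tfrac{\beta}{s^+}M(0)$, which $M(\tau)\le M(\tau_0)$ does not guarantee. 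The fix is exactly the paper's choice \eqref{4.1-t0}: using continuity of $M$ at $0$ and $\beta>s^+$, pick $\bar\tau_0\in(0,\tau_0)$ with $M(\bar\tau_0)<\tfrac{\beta}{s^+}M(0)$ and shrink $\rho$ so that $\int A(x,\nabla u)\,\diff x\le\bar\tau_0$ on $\overline B_\rho$; nothing else in your argument changes. Beyond this, only cosmetic slips remain: the coefficient of the critical term in that identity should be $\tfrac1\beta-\tfrac1{t^-}$ (further reduced by $\tfrac{\lambda C_4}{\beta}$ coming from $(\mathcal F_2)$, absorbed for $\lambda$ small), your atom bound should read $\min\{1,M(0)\}\mu_i\le\nu_i$ before insertion into \eqref{CCP.nu_mu}, and in the final decay estimate the lower bound from \eqref{Est.A} for $\|u\|\le1$ carries the exponent $q_\alpha^+$ rather than $p_\alpha^-$ (and the critical term must first be absorbed using the smallness of $\rho$); these affect constants and exponents, not the structure.
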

In addition, if $f$ is symmetric with respect to the second variable, then we can obtain infinitely many solutions for problem \eqref{Prob} as follows.
\begin{theorem}[Infinitely many solutions for the generalized concave-convex type problem]\label{Theo.Sub}
	In addition to the assumptions of Theorem~\ref{Theo.Sub1}, assume that $f(x,-\tau)=-f(x,\tau)$ and $F(x,\tau)\geq 0$ for a.e. $x\in \R^N$ and all $\tau \in \mathbb{R}$. Then, there exists $\la_{*}>0$ such that for any $\la\in(0,\la_{*})$, problem \eqref{Prob} admits infinitely many solutions. Furthermore, if let $u_{\la}$ be one of these solutions, then it holds that
	\[
	\lim_{\la\to0^{+}}\|u_\la\|=0.
	\]
\end{theorem}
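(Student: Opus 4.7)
The plan is to apply a symmetric (Clark-type) minimax theorem based on the Krasnoselskii genus to the energy functional associated with \eqref{Prob}. Set
\[
J_\la(u):=\widehat{M}\!\left(\int_{\R^N}A(x,\nabla u)\,\diff x\right)+\int_{\R^N}\tfrac{V(x)}{\alpha(x)}|u|^{\alpha(x)}\,\diff x-\la\int_{\R^N}F(x,u)\,\diff x-\int_{\R^N}\tfrac{b(x)}{t(x)}|u|^{t(x)}\,\diff x,
\]
where $\widehat{M}(s):=\int_0^s M(\tau)\,\diff\tau$. Under our hypotheses $J_\la\in C^1(X,\R)$ and its critical points correspond to weak solutions of \eqref{Prob} in the sense of \eqref{Def.w-sol}. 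The new symmetry hypotheses $f(x,-\tau)=-f(x,\tau)$ and $F(x,\tau)\geq 0$, together with $(A1)$, give $J_\la(-u)=J_\la(u)$ and $J_\la(0)=0$, putting us in the $\mathbb{Z}_2$-equivariant setting of Clark/Heinz--Rabinowitz.

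The first step is to pin down the minimax geometry. Using $(\mathcal{M})$ (so $\widehat{M}(s)\geq M(0)s$ for small $s$), \eqref{Est.A}, Proposition \ref{IV.compact} applied to the subcritical part via $(\mathcal{F}_2)$, and the critical Sobolev constant \eqref{Sb}, for $\|u\|$ small one obtains a bound of the form $J_\la(u)\geq c_1\|u\|^{q_\alpha^+}-c_2\la\|u\|^{r^-}-c_3\|u\|^{t^-}$. Since $r^+<p_\alpha^-\leq q_\alpha^+<t^-$, there exist $\la_\star>0$ and $\rho>0$ with $J_\la$ bounded below on $\overline{B}_\rho$ for all $\la\in(0,\la_\star)$. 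On the other hand, the lower estimate $\beta F(x,\tau)\geq C_1\varrho|\tau|^{r(x)}-C_2b|\tau|^{t(x)}$ from $(\mathcal{F}_2)$ combined with $F\geq 0$ shows that on any $k$-dimensional subspace $X_k\subset X$ (where all norms are equivalent), one may choose $0<\rho_k\ll \rho$ so that $\sup_{u\in X_k,\,\|u\|=\rho_k}J_\la(u)<0$. This yields a symmetric compact $A_k\subset J_\la^{-1}(-\infty,0)$ with Krasnoselskii genus $\gamma(A_k)\geq k$, so that
\[
c_k:=\inf_{A\in\Sigma_k}\sup_{u\in A}J_\la(u),\qquad \Sigma_k:=\bigl\{A\subset X\setminus\{0\}:\ A\text{ closed, symmetric, }\gamma(A)\geq k\bigr\},
\]
satisfies $-\infty<c_k<0$ and $c_k\nearrow 0^-$ as $k\to\infty$.

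The analytic heart is a local Palais--Smale condition: there exists $c^\star_\la<0$ with $c^\star_\la\to 0$ as $\la\to 0^+$ such that every $(\mathrm{PS})_c$ sequence of $J_\la$ with $c\in(c^\star_\la,0)$ is precompact. Boundedness of such a sequence $(u_n)$ follows from $(\mathcal{F}_2)$ with $(P2)$ and $(A5)$: the choice $\beta\in(s^+,t^-)$ makes $J_\la(u_n)-\tfrac{1}{\beta}\langle J_\la'(u_n),u_n\rangle$ coercive modulo subcritical corrections, which are absorbed by Young's inequality thanks to $\varrho^{t/(t-r)}b^{-r/(t-r)}\in L^1(\R^N)$. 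Up to a subsequence, $u_n\rightharpoonup u$ in $X$, and Theorems \ref{T.CCP}, \ref{T.CCP.infinity} produce measures $\mu,\nu,\mu_\infty,\nu_\infty$. If $\nu_i>0$ at some $x_i\in\mathcal{C}$, then \eqref{CCP.nu_mu} forces a lower bound $\mu_i\geq\delta_0>0$ depending only on $S_b$, $M_1$ and the exponents; \eqref{CCP2.nu_mu} plays the same role at infinity. Testing $J_\la'(u_n)$ against $\phi_{i,\eps}u_n$ (resp.\ $\phi_Ru_n$), passing to the limit and inserting the mass lower bound into $c=\lim\bigl(J_\la(u_n)-\tfrac{1}{\beta}\langle J_\la'(u_n),u_n\rangle\bigr)$ yields a strictly positive lower bound for $c$, contradicting $c<0$ once $|c|<|c^\star_\la|$ is taken small enough. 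Hence all $\nu_i$ and $\nu_\infty$ vanish; then $u_n\to u$ in $L^{t(\cdot)}(b,\R^N)$ and in the weighted spaces of Proposition \ref{IV.compact}, and strong convergence in $X$ follows from the $(S_+)$-structure induced by $(A2)$ and $(A6)$, exactly as in the derivation of \eqref{CCP.form.mu}.

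With $J_\la$ even, $J_\la(0)=0$, bounded below on $\overline{B}_\rho$, admitting high-genus sets of strictly negative supremum, and satisfying local $(\mathrm{PS})_c$ on $(c^\star_\la,0)$, Clark's theorem supplies infinitely many distinct antipodal pairs $\pm u_k$ of critical points with $J_\la(u_k)=c_k\to 0^-$. For the last assertion, any critical point $u_\la$ obtained this way lies in $\overline{B}_\rho$; combining $J_\la(u_\la)\to 0$, $\langle J_\la'(u_\la),u_\la\rangle=0$ and the upper bounds in $(\mathcal{F}_2)$ yields $\mathcal{A}(u_\la)=o_\la(1)$, and then \eqref{Est.A} gives $\|u_\la\|\to 0$ as $\la\to 0^+$. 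The main obstacle is the local Palais--Smale condition: the nonhomogeneous growth of $a$ (behaving like $|\xi|^{p(x)-2}\xi$ at infinity and $|\xi|^{q(x)-2}\xi$ near zero) makes \eqref{CCP.nu_mu} a $\max$ over two regimes, and $c^\star_\la$ must be chosen so that the resulting lower bounds from both finite concentration and mass at infinity strictly exceed $|c^\star_\la|$ uniformly in $\la\in(0,\la_\star)$.
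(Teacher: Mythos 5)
Your overall strategy (genus-based multiplicity combined with a CCP-based local Palais--Smale condition at negative levels) is the same as the paper's, but there are two genuine gaps, both tied to missing truncations. First, you work with the untruncated Kirchhoff term $\widehat{M}(s)=\int_0^s M(\tau)\diff\tau$. Assumption $(\mathcal{M})$ only controls $M$ on $[0,\tau_0]$ (continuity, monotonicity, $M(0)>0$); beyond $\tau_0$ the function $M$ is completely unconstrained, so your $J_\la$ need not be $C^1$, the bound $\widehat{M}(s)\geq M(0)s$ is only available for small $s$, and the key coercivity estimate for $J_\la(u_n)-\tfrac{1}{\beta}\langle J_\la'(u_n),u_n\rangle$ fails: one needs $\widehat{M}(\tau)-\tfrac{s^+}{\beta}M(\tau)\tau\geq c\,\tau$, which is exactly what the paper manufactures by replacing $M$ with the truncation $M_0$ of \eqref{Def.M0}, after fixing $\bar{\tau}_0$ with $M(\bar{\tau}_0)<\tfrac{\beta}{s^+}M(0)$ as in \eqref{4.1-t0}, so that \eqref{Est.M0}--\eqref{Est.hat.M0} hold. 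Without this step your boundedness of $(\textup{PS})_c$ sequences does not go through, and you also lose the link back to the original problem, since a critical point of the truncated functional solves \eqref{Prob} only once one checks $\int_{\R^N}A(x,\nabla u)\diff x\leq\bar{\tau}_0$.

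Second, you apply a Clark-type theorem directly to $J_\la$ with $c_k:=\inf_{A\in\Sigma_k}\sup_A J_\la$ and assert $-\infty<c_k<0$ with $c_k\nearrow 0^-$. But $J_\la$ (even with $M_0$ in place of $M$) is not bounded below on $X$ because the critical term $\int_{\R^N}\tfrac{b}{t(x)}|u|^{t(x)}\diff x$ dominates the principal part on finite-dimensional subspaces, so the infimum over all genus-$k$ sets is $-\infty$ and your minimax values are not well defined; moreover your local $(\textup{PS})$ window $(c^\star_\la,0)$ need not capture the levels $c_k$. The paper resolves this with a second truncation: the functional $T_\la$ with the cutoff $\phi(\mathcal{A}(u))$, which is bounded below, coincides with $J_\la$ on the sublevel set $\{T_\la<0\}$ (Lemma~\ref{Le4.6}), has $\gamma(T_\la^{-\epsilon})\geq k$ (Lemma~\ref{Le4.7}), and enjoys $(\textup{PS})_c$ at every $c<0$ for $\la<\la_*$ because the threshold $K_1-K_2\max\{\la^{l^+/(l^+-1)},\la^{l^-/(l^--1)}\}$ of Lemma~\ref{Le4.3} is then positive; the multiplicity comes from the standard statement $\gamma(K_c)\geq m+1$ when levels coincide (Lemma~\ref{Le4.8}), not from $c_k\to 0^-$. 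This truncation also repairs your final step: your claim that $J_\la(u_\la)\to 0$ for an arbitrary solution $u_\la$ is unsupported, whereas the paper gets $\|u_\la\|\to 0$ from the uniform bound $\mathcal{A}(u_\la)<\alpha_2\tau_1(\la)^{p_\alpha^-}$ of Lemma~\ref{Le4.6} together with $\tau_1(\la)\to 0$ as $\la\to 0^+$ (see \eqref{51}). Your CCP/local-compactness analysis is essentially the paper's Lemma~\ref{Le4.3}, but without the two truncations the argument as proposed would fail.
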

We have the following important remark.
\begin{remark}\rm
	In Theorems~\ref{Theo.Sub1} and \ref{Theo.Sub}, if we additionally assume $(V3)$ and $\alpha(\cdot)\leq r(\cdot)$, then the weights $\rho,\rho_i$ in $(\mathcal{F}_1)$ and $(\mathcal{F}_2)$ can be $L^\infty$-weights.
\end{remark}

We will make use of critical points theory to determine solutions to problem~\eqref{Prob}.  In order to get necessary properties regarding the Kirchhoff term, we truncate the function $M(t)$ as follows. Let us fix $\bar{\tau}_0\in(0,\tau_0)$ such that
\begin{equation}\label{4.1-t0}
	M(\bar{\tau}_0)<\frac{\beta}{s^+}M(0).
\end{equation}
 Define
\begin{equation}\label{Def.M0}
	M_0(t):=\begin{cases}
		M(\tau),\ \ &0\leq \tau\leq \bar{\tau}_0,\\
		M(\bar{\tau}_0),\ \ &\tau>\bar{\tau}_0
	\end{cases}
\end{equation}
and
$$\widehat{M}_0(\tau):=\int_0^\tau M_0(s)\diff s,\ \ \tau\geq 0.$$
It is clear that $M_0\in C([0,\infty),\R^+)$,
\begin{equation}\label{Est.M0}
	0<m_0:=M(0)\leq M_0(\tau)\leq M(\bar{\tau}_0),\quad \forall \tau\in[0,\infty)
\end{equation}
and
\begin{equation}\label{Est.hat.M0}
	m_0\tau\leq \widehat{M}_0(\tau)\leq M(\bar{\tau}_0)\tau,\quad \forall \tau\in[0,\infty).
\end{equation}
Let $\lambda>0$. We define modified energy functionals $J_\lambda, \widetilde{J}_\lambda: \, X\to\R$ as
\begin{multline*}
	 J_\lambda(u):=\widehat{M}_0\left(\int_{\R^N}A(x,\nabla u)\diff x\right)+\int_{\R^N}\frac{V(x)}{\alpha(x)}|u|^{\alpha(x)}\diff x\\
	-\lambda\int_{\R^N} F(x,u)\diff x-\int_{\R^N}\frac{b(x)}{t(x)}|u|^{t(x)}\diff x,\quad u\in X,
\end{multline*}
and
\begin{multline*}
	\widetilde{J}_\lambda(u):=\widehat{M}_0\left(\int_{\R^N}A(x,\nabla u)\diff x\right)+\int_{\R^N}\frac{V(x)}{\alpha(x)}|u|^{\alpha(x)}\diff x\\
	-\lambda\int_{\R^N} F^+(x,u)\diff x-\int_{\R^N}\frac{b(x)}{t(x)}(u^+)^{t(x)}\diff x,\quad u\in X,
\end{multline*}
where $u^+:=\max\{u,0\}$ and
\begin{equation}\label{positive_F}
	F^+(x,\tau):=\int_0^\tau f^+(x,\eta)\diff\eta \ \ \text{and}\ \ f^{+}(x,\tau):=\begin{cases}
		f(x,\tau),\ \ &\tau\geq 0,\\
		0,\ \ &\tau<0
	\end{cases}
\end{equation}
for a.e. $x\in\R^N$ and all $\tau\in\R$. By a standard argument, we can show that $J_\lambda,\widetilde{J}_\lambda\in C^1(X,\R)$ and its Fr\'echet derivative $J_\lambda',\widetilde{J}_\lambda': X\to X^\ast$ are given by
\begin{multline*}
	\left\langle J_\lambda'(u),v\right\rangle=M_0\left(\int_{\R^N}A(x,\nabla u)\,\diff x\right)\int_{\R^N}a(x,\nabla u)\cdot\nabla v\,\diff x+\int_{\R^N}V(x)|u|^{\alpha(x)-2}u v\diff x\\
	-\lambda\int_{\R^N} f(x,u)v\,\diff x-\int_{\R^N}b(x) |u|^{t(x)-2}uv\,\diff x,\quad \forall\, u,v\in X,
\end{multline*}
and
\begin{multline*}
	\left\langle \widetilde{J}_\lambda'(u),v\right\rangle=M_0\left(\int_{\R^N}A(x,\nabla u)\,\diff x\right)\int_{\R^N}a(x,\nabla u)\cdot\nabla v\,\diff x+\int_{\R^N}V(x)|u|^{\alpha(x)-2}u v\diff x\\
	-\lambda\int_{\R^N} f^+(x,u)v\,\diff x-\int_{\R^N} b(x)(u^+)^{t(x)-1}v\,\diff x,\quad \forall\, u,v\in X.
\end{multline*}
From the definition \eqref{Def.M0} of $M_0$, it is clear that any critical point $u$ of $J_\lambda$ (resp. $\widetilde{J}_\lambda$) is a solution (resp. a nonnegative solution) to problem~\eqref{Prob} provided $\int_{\R^N}A(x,\nabla u)\,\diff x\leq \bar{\tau}_0$.

In the next two subsections, we always assume that $(A1)-(A6)$, $(P1)$, $(P2)$, $(V1)$, $(V2)$, $(\mathcal{E}_\infty)$, $(\mathcal{M})$, $(\mathcal{F}_1)$ and $(\mathcal{F}_2)$ hold with $\varrho^{\frac{t}{t-r}}b^{-\frac{r}{t-r}}\in L^1(\mathbb{R}^N).$ We also make use of the following estimate that is easily derived from \eqref{Sb} and \eqref{IV.CompactImb}:
\begin{equation}\label{4.EstNorms}
	\max\left\{|u|_{L^{r(\cdot)}(\varrho,\R^N)}, |u|_{L^{t(\cdot)}(b,\R^N)} \right\}\leq C_7\|u\|,\ \  \forall u\in X.
\end{equation}
Here and in the rest of this section, $C_i$ ($i=7,8,\cdots$) stand for positive constants depending only on the data, and we can take $C_7>1$.

\medskip
\subsection{\textbf{Existence of a nontrivial nonnegative solution}} ${}$

\smallskip
In this subsection, we will prove Theorem~\ref{Theo.Sub1} via employing the Ekeland variational principle for $\widetilde{J}_\lambda$. For this purpose, we first obtain several auxiliary results. The next lemma provides a certain range of levels such that the local Palas-Smale condition for $\widetilde{J}_\lambda$ is satisfied. In the following, by a $(\textup{PS})_c$-sequence $\{u_n\}_{n\in\mathbb{N}}$ for a $C^1$ functional $I:X\to \R$ we mean \begin{equation*}
	I(u_n)\to c \quad\text{and}\quad I'(u_n)\to 0 \quad\text{as}\quad n\to\infty,
\end{equation*}
and we say that $I$ satisfies the $(\textup{PS})_c$ condition if every $(\textup{PS})_c$-sequence for $I$ admits a convergent subsequence. Set
\begin{equation}\label{lambda1}
	 \lambda_1:=\frac{1}{2}\left(\frac{1}{\beta}-\frac{1}{t^{-}}\right)\beta C_4^{-1},
\end{equation}
where $\beta$ and $C_4$ are given in $(\mathcal{F}_2)$.
\begin{lemma}\label{Le4.3}
For any given $\lambda\in (0,\lambda_1)$, $\widetilde{J}_\lambda$ satisfies the $(\textup{PS})_c$ condition with $c\in\mathbb{R}$ satisfying
\begin{align}\label{c-sub}
c<K_1
-K_2\max\left\{\lambda^{\frac{l^{+}}{l^{+}-1}},\lambda^{\frac{l^{-}}{l^{-}-1}}\right\},
\end{align}
where $l(\cdot):=\frac{t(\cdot)}{r(\cdot)}$, and $K_1,K_2$ are positive constants depending only on the data. The conclusion remains valid if  $\widetilde{J}_\lambda$ is replaced with  $J_\lambda$.
\end{lemma}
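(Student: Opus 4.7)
The proof will follow the standard blueprint for critical problems in four stages: (i) boundedness of any $(\textup{PS})_c$ sequence; (ii) application of Theorems~\ref{T.CCP} and \ref{T.CCP.infinity} to extract concentration atoms and mass at infinity; (iii) exclusion of all concentration via the explicit level bound \eqref{c-sub}; (iv) upgrading weak convergence to strong convergence using the uniform convexity $(A6)$. Throughout, the restrictions $\lambda<\lambda_1$ and the truncation of $M$ through \eqref{4.1-t0} provide the quantitative inputs that force positivity of the key coefficients.

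\textbf{Boundedness.} For a $(\textup{PS})_c$ sequence $\{u_n\}$, I would form the linear combination $\widetilde{J}_\lambda(u_n)-\tfrac{1}{\beta}\langle \widetilde{J}_\lambda'(u_n),u_n\rangle$. The bounds \eqref{Est.M0}--\eqref{Est.hat.M0} give $\widehat{M}_0(\mathcal{A}_1(u_n))\geq m_0\mathcal{A}_1(u_n)$ and $M_0(\mathcal{A}_1(u_n))\int a(x,\nabla u_n)\cdot\nabla u_n\,\diff x\leq M(\bar\tau_0)s^+\mathcal{A}_1(u_n)$ after invoking $(A5)$; inequality \eqref{4.1-t0} forces $m_0-M(\bar\tau_0)s^+/\beta>0$. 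Positivity of the $\alpha$-coefficient follows from $\alpha(x)\leq q_\alpha^+<s^+<\beta$, and positivity of the $t$-coefficient (after absorbing the $C_4 b|u_n|^{t(x)}$ contribution from $(\mathcal{F}_2)$) is exactly what the choice of $\lambda_1$ in \eqref{lambda1} guarantees. The residual $\tfrac{\lambda C_3}{\beta}\int\varrho|u_n|^{r(x)}\diff x$ is controlled by \eqref{4.EstNorms} and absorbed by the leading $\mathcal{A}$-terms through \eqref{Est.A}, since $r^+<p_\alpha^-$.

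\textbf{Excluding concentration.} Passing to a subsequence with $u_n\rightharpoonup u$ in $X$, apply Theorems~\ref{T.CCP} and \ref{T.CCP.infinity} to obtain the measures $\mu,\nu$ with atoms $\{(x_i,\mu_i,\nu_i)\}_{i\in\mathcal{I}}\subset\mathcal{C}\times(0,\infty)^2$ together with $\mu_\infty,\nu_\infty$. For each $i\in\mathcal{I}$, I would test $\widetilde{J}_\lambda'(u_n)$ against the cutoff $\phi_{i,\epsilon}u_n$ from the proof of Theorem~\ref{T.CCP} and pass to the limit first in $n\to\infty$ and then $\epsilon\to0^+$. The Kirchhoff factor $M_0(\mathcal{A}_1(u_n))\in[m_0,M(\bar\tau_0)]$ converges along a subsequence; the argument yielding \eqref{P_CCP.est3} produces a lower bound for the gradient/potential contribution in terms of $\mu_i$, the subcritical $\varrho$-term vanishes by the compact embedding in Proposition~\ref{IV.compact}, and the critical term converges to $\nu_i$. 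Combined with the one-sided CCP inequality \eqref{CCP.nu_mu}, this yields a two-sided relation implying $\nu_i=0$ or $\nu_i\geq c_0$ for an explicit constant $c_0$ depending only on $M_1,S_b,m_0$ and the exponents at $x_i$. Substituting into
\[
c=\lim_{n\to\infty}\Bigl[\widetilde{J}_\lambda(u_n)-\tfrac{1}{\beta}\langle \widetilde{J}_\lambda'(u_n),u_n\rangle\Bigr]
\]
and applying Young's inequality to $\int\varrho|u_n|^{r(x)}\diff x$ with the conjugate pair $l(\cdot)=t(\cdot)/r(\cdot)$ and $l'(\cdot)=t(\cdot)/(t(\cdot)-r(\cdot))$ (using $\varrho^{t/(t-r)}b^{-r/(t-r)}\in L^1(\mathbb R^N)$ to absorb the weight), I would obtain
\[
c\geq K_1-K_2\max\bigl\{\lambda^{l^+/(l^+-1)},\lambda^{l^-/(l^--1)}\bigr\},
\]
contradicting \eqref{c-sub}; therefore $\mathcal{I}=\emptyset$. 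An identical cutoff argument at infinity, using $\phi_R$ as in the proof of Theorem~\ref{T.CCP.infinity} and hypothesis $(\mathcal{E}_\infty)$, rules out $\nu_\infty>0$.

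\textbf{Strong convergence and main obstacle.} With $\nu=b|u|^{t(x)}$ and $\nu_\infty=0$, identity \eqref{CCP2.nu} combined with Lemma~\ref{V.brezis-lieb} implies $u_n\to u$ in $L^{t(\cdot)}(b,\mathbb R^N)$; Proposition~\ref{IV.compact} simultaneously handles the subcritical terms from $(\mathcal{F}_1)$. Thus $\langle\widetilde{J}_\lambda'(u_n)-\widetilde{J}_\lambda'(u),u_n-u\rangle\to 0$, and using the strict monotonicity of the principal operator (derived from the uniform convexity $(A6)$) together with the positive lower bound $M_0\geq m_0$, one concludes $\|u_n-u\|\to 0$. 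The replacement of $\widetilde{J}_\lambda$ by $J_\lambda$ is routine since $F^+$ and $(u_n^+)^{t(x)}$ are replaced by $F$ and $|u_n|^{t(x)}$ without affecting the analysis. The main technical obstacle is making the lower bound $\nu_i\geq c_0$ effective: since \eqref{CCP.nu_mu} is one-sided and the reverse inequality from PS-testing picks up the Kirchhoff factor $M_0(\mathcal{A}_1(u_n))$ together with the pointwise exponents $p(x_i),q(x_i)$, the algebra has to be carried out in both regimes $\mu_i\lessgtr 1$ and arranged so that $K_1,K_2$ depend only on the data and not on $\lambda$ nor on the particular atom; the parallel estimate at infinity must be done with the constant exponents from $(\mathcal{E}_\infty)$ and is slightly easier.
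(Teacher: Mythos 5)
Your outline follows essentially the same route as the paper's proof: boundedness of a $(\textup{PS})_c$ sequence via $\widetilde{J}_\lambda(u_n)-\beta^{-1}\langle\widetilde{J}_\lambda'(u_n),u_n\rangle$ together with \eqref{4.1-t0}, $(A5)$, $(\mathcal{F}_2)$, $\lambda<\lambda_1$ and $r^+<p_\alpha^-$; Theorems~\ref{T.CCP} and \ref{T.CCP.infinity}; a reverse estimate of the form $\widetilde{m}_0\mu_i\leq\nu_i$ (and $\widetilde{m}_0\mu_\infty\leq\nu_\infty$) obtained by testing the derivative against $\phi_{i,\epsilon}u_n$ and $\phi_R u_n$, which combined with \eqref{CCP.nu_mu} and \eqref{CCP2.nu_mu} forces the atomic masses above a data-dependent threshold; a level estimate contradicting \eqref{c-sub}; and strong convergence via monotonicity plus the Brezis--Lieb argument. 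Your use of Young's inequality with the pair $l,\,l'=l/(l-1)$ in place of the paper's H\"older inequality followed by explicit minimization of $\xi\mapsto a_1\xi^{l^\pm}-b_1\lambda\xi+K_1$ is a legitimate, slightly more direct way to reach the same bound $c\geq K_1-K_2\max\{\lambda^{l^+/(l^+-1)},\lambda^{l^-/(l^--1)}\}$ with constants depending only on the data.

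One ingredient of the paper's argument is missing from your sketch. For $\widetilde{J}_\lambda$ the critical and reaction terms involve $(u_n^+)^{t(x)}$ and $f^+$, whereas Theorem~\ref{T.CCP} is applied to $b|u_n|^{t(x)}$; so when you test with $\phi_{i,\epsilon}u_n$ and assert that ``the critical term converges to $\nu_i$,'' the quantity actually produced is $\int\phi_{i,\epsilon}\,b(u_n^+)^{t(x)}\diff x$, whose weak-$\ast$ limit need not coincide with $\nu$. The paper removes this mismatch at the outset: from $\langle\widetilde{J}_\lambda'(u_n),u_n^-\rangle=o(1)$ and \eqref{Est.A}, \eqref{Est.M0} it deduces $\|u_n^-\|\to0$, hence $\{u_n^+\}$ is again a bounded $(\textup{PS})_c$ sequence, and the entire CCP analysis is carried out with $v_n=u_n^+\geq0$ (this also yields $u_n=v_n-u_n^-\to u$ at the end). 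You must either insert this reduction or justify that the limit measure of $b(u_n^+)^{t(x)}$ is dominated by $\nu$ while still bounded below by $\widetilde{m}_0\mu_i$; otherwise the atom lower bound for $\widetilde{J}_\lambda$ is not established (for $J_\lambda$ your argument applies directly, as the paper notes). A minor point: the strict monotonicity of $a(x,\cdot)$ comes from $(A2)$, and the final passage from $\int_{\R^N}\bigl(a(x,\nabla v_n)-a(x,\nabla u)\bigr)\cdot(\nabla v_n-\nabla u)\diff x\to0$ to $\|v_n-u\|\to0$ is not a bare monotonicity statement but relies on \cite[Lemma 4.7]{Zhang-Radulescu.2018}, consistent with your appeal to $(A6)$.
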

\begin{proof}
Let $\lambda\in (0,\lambda_1)$ and let $\{u_n\}_{n\in\mathbb{N}}$ be a $(\textup{PS})_c$-sequence for $\widetilde{J}_\lambda$ in $X$, namely
\begin{equation}\label{PSc}
	\widetilde{J}_\lambda(u_n)\to c \quad\text{and}\quad \widetilde{J}_\lambda'(u_n)\to 0 \quad\text{as}\quad n\to\infty,
\end{equation}
for some $c\in\R$ satisfying \eqref{c-sub}.  From this, \eqref{4.1-t0}, \eqref{Est.M0}--\eqref{positive_F}, $(A5)$ and $(\mathcal{F}_2)$ we have that for $n$ large,
\begin{align}\label{P.le4.3-1}
	\notag c+&1+\|u_n\|\\
\notag	\geq&\widetilde{J}_\lambda(u_n)-\frac{1}{\beta}\left\langle \widetilde{J}_\lambda'(u_n),u_n \right\rangle\\
	\notag\ge&\widehat{M}_0\left(\int_{\mathbb{R}^N}A(x,\nabla u_n)\diff x\right)  -\frac{1}{\beta}M_0\left(\int_{\mathbb{R}^N}A(x,\nabla u_n)\diff x\right) \int_{\mathbb{R}^N}a(x,\nabla u_n)\cdot\nabla u_n \diff x\\
	\notag&+ \left(\frac{1}{\alpha^{+}}-\frac{1}{\beta}\right)\int_{\R^N}V(x)|u_n|^{\alpha(x)} \diff x -\frac{\la}{\beta}\int_{\R^N}\left[\beta F^+(x,u_n)-f^+(x,u_n)u_n\right] \diff x\\
	\notag& +\left(\frac{1}{\beta}-\frac{1}{t^{-}}\right)\int_{\R^N}b(x)(u_n^+)^{t(x)} \diff x\\
	\ge &\left(m_0-\frac{s^+M(\bar{\tau}_0)}{\beta}\right)\int_{\mathbb{R}^N}A(x,\nabla u_n) \diff x + \left(\frac{1}{\alpha^{+}}-\frac{1}{\beta}\right)\int_{\R^N}V(x)|u_n|^{\alpha(x)} \diff x\notag\\
	\notag&-\frac{\la}{\beta}\int_{\R^N}\left[C_3\varrho(x)(u_n^+)^{r(x)}+C_4b(x)(u_n^+)^{t(x)}\right] \diff x +\left(\frac{1}{\beta}-\frac{1}{t^{-}}\right)\int_{\R^N}b(x)(u_n^+)^{t(x)} \diff x\\
	\ge &m_1\mathcal{A}(u_n)-\frac{\la C_3}{\beta}\int_{\R^N}\varrho(x)|u_n|^{r(x)} \diff x+\left[ \left(\frac{1}{\beta}-\frac{1}{t^{-}}\right)-\frac{\la C_4}{\beta}\right]\int_{\R^N}b(x)(u_n^+)^{t(x)} \diff x,
\end{align}
where $m_1:=\min\left\{m_0-\frac{s^+M(\bar{\tau}_0)}{\beta},\frac{1}{\alpha^{+}}-\frac{1}{\beta}\right\}>0$, $\mathcal{A}$ is defined by \eqref{Def.A} and $\bar{\tau}_0$ is given in \eqref{4.1-t0}. By taking into account \eqref{Est.A}, \eqref{4.EstNorms},  Proposition~\ref{norm-modular} and \eqref{lambda1} we derive from \eqref{P.le4.3-1} that for $n$ large,
\begin{align}
	\notag c+1+\|u_n\|
	\ge m_1\left(\|u_n\|^{p_\alpha^-}-1\right)-\frac{\la C_3}{\beta}\left(C_7^{r^+}\|u_n\|^{r^+}+1\right).
\end{align}
Since $p_\alpha^->r^+>1$, the last inequality yields the boundedness of $\{u_n\}_{n\in\mathbb{N}}$; hence,  $\{u_n^+\}_{n\in\mathbb{N}}$ is also a bounded sequence in $X$. Next, we will show that $\{u_n^+\}_{n\in\mathbb{N}}$ is also a $(\textup{PS})_c$-sequence for $\widetilde{J}_\lambda$ in $X$. To this end, using \eqref{a-odd}, \eqref{Est1}, \eqref{Est.A} and \eqref{Est.M0} we have
\begin{align*}
	o_n(1)&=\left\langle \widetilde{J}_\lambda'(u_n),u_n^-\right\rangle\\
	&=M_0\left(\int_{\R^N}A(x,\nabla u_n)\,\diff x\right)\int_{\R^N}a(x,\nabla u_n)\cdot\nabla u_n^-\,\diff x+\int_{\R^N}V(x)|u_n|^{\alpha(x)-2}u_n u_n^-\diff x\\
	&=M_0\left(\int_{\R^N}A(x,\nabla u_n)\,\diff x\right)\int_{\R^N}a(x,\nabla u_n^-)\cdot\nabla u_n^-\,\diff x+\int_{\R^N}V(x)|u_n^-|^{\alpha(x)}\diff x\\
	&\geq m_0\int_{\R^N}A(x,\nabla u_n^-)\,\diff x+\int_{\R^N}V(x)|u_n^-|^{\alpha(x)}\diff x\\
	&\geq \widetilde{m}_0\mathcal{A}(u_n^-)\\
	&\geq \widetilde{m}_0 \alpha_1\min\left\{\|u_n^-\|^{p_\alpha^-},\|u_n^-\|^{q_\alpha^+}\right\},
\end{align*}
where $u_n^-:=\min\{u_n,0\}$ and $\widetilde{m}_0:=\min\{1,m_0\}$. It follows that
\begin{equation}\label{lim-neg-un}
	\lim_{n\to\infty}\mathcal{A}(u_n^-)=\lim_{n\to\infty}\|u_n^-\|=0.
\end{equation}
It is not difficult to see that
\begin{equation}\label{neg-un1}
	\widetilde{J}_\lambda(u_n)=\widetilde{J}_\lambda(u_n^+)+o_n(1)
	\end{equation}
and
\begin{equation}\label{neg-un2}
	\|\widetilde{J}_\lambda'(u_n^+)\|_{X^*}\leq C\left[\|\widetilde{J}_\lambda'(u_n)\|_{X^*}+\mathcal{A}(u_n^-)^{\frac{1}{p_\alpha^-}}+\mathcal{A}(u_n^-)^{\frac{1}{q_\alpha^+}}\right]
\end{equation}
for some positive constant $C$ independent of $n$. From \eqref{PSc} and \eqref{lim-neg-un}--\eqref{neg-un2}, we have that $\{u_n^+\}_{n\in\mathbb{N}}$ is a $(\textup{PS})_c$-sequence for $\widetilde{J}_\lambda$ in $X$.

Set $v_n:=u_n^+$ for $n\in\N$. Then, $v_n\geq 0$ a.e. in $\R^N$ and $\{v_n\}_{n\in\mathbb{N}}$ is a bounded $(\textup{PS})_c$-sequence for $\widetilde{J}_\lambda$ in $X$. By the reflexivity of $X$ and Theorems \ref{T.CCP}--\ref{T.CCP.infinity}, we find $\{x_i\}_{i\in\mathcal{I}}\subset\mathcal{C}$ with $\mathcal{I}$ at most countable such that, up to a subsequence, we have
\begin{align}
v_n(x)&\to u(x)\geq 0\quad\text{a.e. }x\in\Bbb R^{N},\label{31}\\
v_n &\rightharpoonup u\quad\text{in }X,\label{4.w-conv}\\
A(x,\nabla v_n)+V(x)v_n^{\alpha(x)} &\overset{\ast }{\rightharpoonup }\mu\geq A(x,\nabla u) + V(x)u^{\alpha(x)}+\sum_{i\in \mathcal{I}} \mu_i \delta_{x_i}\quad \text{in}\quad \mathcal{M}(\R^N),\label{33}\\
bv_n^{t(x)}&\overset{\ast }{\rightharpoonup }\nu=bu^{t(x)} + \sum_{i\in \mathcal{I}}\nu_i\delta_{x_i}\quad\text{in}\quad\mathcal{M}(\R^N),\label{34}\\
S_b \nu_i^{\frac{1}{t(x_i)}} &\leq 2\max\left\{ M_1^{-\frac{1}{p(x_i)}},M_1^{-\frac{1}{q(x_i)}}\right\} \max\left\{\mu_i^{\frac{1}{p(x_i)}},\mu_i^{\frac{1}{q(x_i)}}\right\}, \  \ \forall i\in \mathcal{I} \label{35}
\end{align}
and
\begin{eqnarray}
\underset{n\to\infty}{\lim\sup}\int_{\mathbb{R}^N}\left[A(x,\nabla v_n)+V(x)v_n^{\alpha(x)}\right]\diff x=\mu(\mathbb{R}^N)+\mu_\infty,\label{4..mu}\\
\underset{n\to\infty}{\lim\sup}\int_{\mathbb{R}^N}b(x)v_n^{t(x)}\diff x=\nu(\mathbb{R}^N)+\nu_\infty,\label{4.nu}
\end{eqnarray}
\begin{equation}\label{CCP2.nu_mu2}
S_b\nu_\infty^{\frac{1}{t_\infty}}\\
\leq \max\left\{1, M_1^{-\frac{1}{p_\infty}}\right\} \left(\mu_\infty^{\frac{1}{p_\infty}}+\mu_\infty^{\frac{1}{q_\infty}}+ \mu_\infty^{\frac{1}{\alpha_\infty}}\right).
\end{equation}
We claim that $\mathcal{I}=\emptyset$ and $\mu_\infty=\nu_\infty=0$. To this end, let us suppose on the contrary that this does not hold. We first consider the case that there exists $i\in \mathcal{I}$. Let $\epsilon>0$ and define $\phi_{i,\epsilon}$ as in the proof of Theorem \ref{T.CCP}. By \eqref{Est1} and \eqref{Est.M0} we have
\begin{align*}
\widetilde{m}_0&\int_{\R^N}\phi_{i,\epsilon}\left[A(x,\nabla v_n)+V(x)v_n^{\alpha(x)}\right]\diff x-\int_{\R^N}\phi_{i,\epsilon}b(x)v_n^{t(x)}\diff x\\
\le& M_0\left(\int_{\R^N}A(x,\nabla v_n)\,\diff x\right)\int_{\R^N}\phi_{i,\epsilon}a(x,\nabla v_n)\cdot\nabla v_n\diff x+\int_{\R^N}\phi_{i,\epsilon}V(x)v_n^{\alpha(x)}\diff x\\
&-\int_{\R^N}\phi_{i,\epsilon}b(x)v_n^{t(x)}\diff x\\
=&\left\langle \widetilde{J}_\lambda'(v_n),\phi_{i,\epsilon}v_n \right\rangle - M_0\left(\int_{\R^N}A(x,\nabla v_n)\,\diff x\right)\int_{\R^N}a(x,\nabla v_n)v_n\cdot\nabla\phi_{i,\epsilon}\diff x\\
& +\la\int_{\R^N}\phi_{i,\epsilon}f(x,v_n)v_n \diff x.
\end{align*}
 From this and \eqref{Est.M0}, we have
\begin{multline}\label{(4.12)}
	\widetilde{m}_0\int_{\R^N}\phi_{i,\epsilon}\left[A(x,\nabla v_{n})+V(x)|v_{n}|^{\alpha(x)}\right]\diff x-\int_{\R^N}\phi_{i,\epsilon}b(x)v_n^{t(x)}\diff x\\
	\le \left| \left\langle \widetilde{J}_\lambda'(v_n),\phi_{i,\epsilon}v_n \right\rangle\right| +\la\int_{\R^N}\phi_{i,\epsilon}|f(x,v_n)v_n| \diff x +M(\bar{\tau}_0)\int_{\R^N}|a(x,\nabla v_n)|| v_n||\nabla\phi_{i,\epsilon}|\diff x.
\end{multline}
We will show that $\limsup_{\epsilon\to0^{+}}\limsup_{n\to\infty} T(v_n,\phi_{i,\epsilon})=0$, where $T(v_n,\phi_{i,\epsilon})$ is each term in the right-hand side of \eqref{(4.12)}. To this end, we first note that by the boundedness of $\{v_n\}$ in $X$, we have
\begin{equation}\label{37}
C_*:=\sup_{n\in\mathbb{N}}\int_{\Bbb R^{N}}\left[|\nabla v_n|^{p(x)}\chi_{\{|\nabla v_n|\ge 1\}}+|\nabla v_n|^{q(x)}\chi_{\{|\nabla v_n|\le 1\}}\right]\diff x<\infty.
\end{equation}
Noticing the boundedness of $\{\phi_{i,\epsilon}v_n\}$ in $X$, it follows from  \eqref{PSc} that
\begin{equation}\label{39}
\limsup_{\epsilon\to0^{+}}\limsup_{n\to\infty}\left| \left\langle \widetilde{J}_\lambda'(v_n),\phi_{i,\epsilon}v_n \right\rangle\right|=0.
\end{equation}
By \eqref{4.w-conv} and $(\mathcal{F}_{1})$, we have $v_n\to u$ in $L^{r_i(\cdot)}(\varrho_i,\R^N)$ ($i=1,\cdots,m$) in view of \eqref{IV.CompactImb}. From this, $(\mathcal{F}_1)$ and \eqref{31}, invoking the Lebesgue dominated convergence theorem, we easily see that
\begin{equation} \label{(4.18)}
\limsup_{\epsilon\to0^{+}}\limsup_{n\to\infty}\int_{\R^N}\phi_{i,\epsilon}|f(x,v_n)v_n| \diff x =\limsup_{\epsilon\to0^{+}}\int_{\R^N}\phi_{i,\epsilon}|f(x,u)u| \diff x =0.
\end{equation}
Finally, invoking the Young inequality, $(A3)$ and \eqref{37} we deduce that for an arbitrary $\delta>0$,
\begin{align}\label{(4.19)}
\notag&\int_{\R^N}|a(x,\nabla v_n)| |v_n||\nabla\phi_{i,\epsilon}|\diff x\\
\notag&\leq\int_{\R^N}\left[\delta|a(x,\nabla v_n)|^{\frac{p(x)}{p(x)-1}} +C(\delta)|v_n|^{p(x)}|\nabla\phi_{i,\epsilon}|^{p(x)}\right]\diff x\\
\notag&\le \widetilde{C}\delta\int_{\R^N}\left[|\nabla v_n|^{p(x)} \chi_{\{|\nabla v_n|\geq 1\}}+|\nabla v_n|^{\frac{p(x)(q(x)-1)}{p(x)-1}} \chi_{\{|\nabla v_n|\leq 1\}}\right]\diff x+C(\delta)\int_{\R^N}|v_n|^{p(x)}|\nabla\phi_{i,\epsilon}|^{p(x)}\diff x\\
\notag&\le \widetilde{C}\delta\int_{\R^N}\left[|\nabla v_n|^{p(x)} \chi_{\{|\nabla v_n|\geq 1\}}+|\nabla v_n|^{q(x)} \chi_{\{|\nabla v_n|\leq 1\}}\right]\diff x+C(\delta)\int_{\R^N}|v_n|^{p(x)}|\nabla\phi_{i,\epsilon}|^{p(x)}\diff x\\
&\le \widetilde{C}C_*\delta+C(\delta)\int_{\R^N}|v_n|^{p(x)}|\nabla\phi_{i,\epsilon}|^{p(x)}\diff x
\end{align}
for a positive constant $\widetilde{C}$. By \eqref{4.w-conv} and Proposition~\ref{W} we have
\begin{equation*}
	\int_{\R^N}|v_n|^{p(x)}|\nabla\phi_{i,\epsilon}|^{p(x)}\diff x=	\int_{B_\epsilon(x_i)}|v_n|^{p(x)}|\nabla\phi_{i,\epsilon}|^{p(x)}\diff x\to \int_{B_\epsilon(x_i)}|u|^{p(x)}|\nabla\phi_{i,\epsilon}|^{p(x)}\diff x
\end{equation*}
as $n\to\infty$. From this and \eqref{(4.19)} we obtain
\begin{equation}\label{(4.17)}
	\limsup_{n\to\infty}\int_{\R^N}|a(x,\nabla v_n)| |v_n||\nabla\phi_{i,\epsilon}|\diff x\leq \widetilde{C}C_*\delta+C(\delta)\int_{B_\epsilon(x_i)}|u|^{p(x)}|\nabla\phi_{i,\epsilon}|^{p(x)}\diff x.
\end{equation}
Arguing as that obtained \eqref{P_CCP.est5} we get
\begin{equation*}
	\limsup_{\epsilon\to0^{+}}\limsup_{n\to\infty}\int_{B_\epsilon(x_i)}\big|u\nabla\phi_{i,\epsilon}\big|^{p(x)}\diff x=0.
\end{equation*}
Thus, \eqref{(4.17)} implies that
\begin{equation*}
\limsup_{\epsilon\to0^{+}}\limsup_{n\to\infty}\int_{\R^N}|a(x,\nabla v_n)| |v_n||\nabla\phi_{i,\epsilon}|\diff x\leq \widetilde{C}C_*\delta.
\end{equation*}
Since $\delta>0$ was taken arbitrarily, the preceding inequality leads to
\begin{equation}\label{44}
	\limsup_{\epsilon\to0^{+}}\limsup_{n\to\infty}\int_{\R^N}|a(x,\nabla v_n)| |v_n||\nabla\phi_{i,\epsilon}|\diff x=0.
\end{equation}
By passing to the limit superior as $n\to\infty$ and taking into account \eqref{39}, \eqref{(4.18)} and \eqref{44}, we deduce from \eqref{(4.12)} that
\begin{equation*}
\widetilde{m}_0\mu_{i}\leq\nu_{i}.
\end{equation*}
Combining this with \eqref{35} gives
\begin{align*}
\mu_i\geq& \min\left\{\left(2^{-1}S_{b}\right)^{\frac{t(x_i)p(x_i)}{t(x_i)-p(x_i)}},\left(2^{-1}S_{b}\right)^{\frac{t(x_i)q(x_i)}{t(x_i)-q(x_i)}}\right\}
\min\left\{M_{1}^{\frac{t(x_i)p(x_i)}{(t(x_i)-p(x_i))q(x_i)}},M_{1}^{\frac{t(x_i)q(x_i)}{(t(x_i)-q(x_i))p(x_{i})}}\right\}\\
&\qquad\times\min\left\{\widetilde{m}_0^{\frac{p(x_i)}{t(x_i)-p(x_i)}},\widetilde{m}_0^{\frac{q(x_i)}{t(x_i)-q(x_i)}}\right\}.
\end{align*}
	Hence, one has
	\begin{equation}\label{4.Est.mu_i-1}
	\nu_i\geq\widetilde{m}_0\mu_i\geq k_1
\end{equation}
with  $h_{p}(x):=\frac{p(x)}{t(x)-p(x)}$, $h_{q}(x):=\frac{q(x)}{t(x)-q(x)}$ for  $x\in\mathbb{R}^N$ and
\begin{equation*}
	k_1:=\min\left\{\left(2^{-1}S_{b}\right)^{\left(h_{p}t\right)^-},\left(2^{-1}S_{b}\right)^{\left(h_{q}t\right)^+}\right\}
	\min\left\{M_1^{\left(\frac{h_{p}t}{q}\right)^-},M_1^{\left(\frac{h_{q}t}{p}\right)^+}\right\}\min\left\{\widetilde{m}_0^{1+h_p^-},\widetilde{m}_0^{1+h_q^+}\right\}.
\end{equation*}

Next, we consider the other case, namely, $\mu_\infty>0$. Let $\phi_R$ be as in the proof of Theorem~\ref{T.CCP.infinity}.  Arguing as that obtained \eqref{(4.12)} we have
\begin{multline}\label{4.23}
\widetilde{m}_0\int_{\R^N}\phi_R\left[A(x,\nabla v_n)+V(x)v_n^{\alpha(x)}\right]\diff x-\int_{\R^N}\phi_Rb(x)v_n^{t(x)}\diff x\\
\le \left| \left\langle \widetilde{J}_\lambda'(v_n),\phi_Rv_n \right\rangle\right| +\la\int_{\R^N}\phi_R|f(x,v_n)v_n| \diff x +M(\bar{\tau}_0)\int_{\R^N}|a(x,\nabla v_n)|| v_n||\nabla\phi_R|\diff x.
\end{multline}
By the same way of deriving \eqref{39}, \eqref{(4.18)} and \eqref{44}, we get
\begin{align*}\label{(4.25)}
\notag	\lim_{R\to\infty}\underset{n\to\infty}{\lim\sup}\left| \left\langle \widetilde{J}_\lambda'(v_n),\phi_Rv_n \right\rangle\right|&=\lim_{R\to\infty}\underset{n\to\infty}{\lim\sup}\int_{\R^N}\phi_R|f(x,v_n)v_n|\diff x \\
	\notag&=\lim_{R\to\infty}\underset{n\to\infty}{\lim\sup}\int_{\R^N}|a(x,\nabla v_n)|| v_n||\nabla\phi_R|\diff x\\
	&=0.
\end{align*}
 Using this, \eqref{P_CCP2.mu_inf} and \eqref{P_CCP2.nu_inf} while  passing to the limit superior as $n\to\infty$ and then $R\to\infty$ in  \eqref{4.23}, we derive
\begin{equation*}
\widetilde{m}_0\mu_{\infty}\leq\nu_{\infty}.
\end{equation*}
Combining this with \eqref{CCP2.nu_mu} gives
\begin{multline*}
	\mu_\infty\geq \min\left\{\left(3^{-1}S_{b}\right)^{\frac{t_\infty p_\infty}{t_\infty-p_\infty}},\left(3^{-1}S_{b}\right)^{\frac{t_\infty q_\infty}{t_\infty-q_\infty}},\left(3^{-1}S_{b}\right)^{\frac{t_\infty \alpha_\infty}{t_\infty-\alpha_\infty}}\right\}\\
	\times\min\left\{1,M_{1}^{\frac{t_\infty}{t_\infty-p_\infty}},M_{1}^{\frac{t_\infty \alpha_\infty}{(t_\infty-\alpha_\infty)p_\infty}},M_{1}^{\frac{t_\infty q_\infty}{(t_\infty-q_\infty)p_\infty}}\right\}\\
	\times
	\min\left\{\widetilde{m}_0^{\frac{p_\infty}{t_\infty-p_\infty}},\widetilde{m}_0^{\frac{q_\infty}{t_\infty-q_\infty}},\widetilde{m}_0^{\frac{\alpha_\infty}{t_\infty-\alpha_\infty}}\right\}.
\end{multline*}
We get from the last two inequalities that
\begin{equation}\label{4.Est.mu_i-2}
		\nu_\infty\geq\widetilde{m}_0\mu_\infty\geq k_2,
\end{equation}
where
\begin{equation*}	k_2:=\min\left\{\left(3^{-1}S_{b}\right)^{\left(h_{p_\alpha}t\right)^-},\left(3^{-1}S_{b}\right)^{\left(h_qt\right)^+}\right\}\min\left\{1,M_{1}^{\left(\frac{h_qt}{p}\right)^+}\right\}\min\left\{\widetilde{m}_0^{1+h_{p_\alpha}^-},\widetilde{m}_0^{1+h_q^+}\right\}
\end{equation*}
and
\begin{equation*}	
h_{p_{\alpha}}(\cdot):=\frac{p_{\alpha}(\cdot)}{t(\cdot)-p_{\alpha}(\cdot)}.
\end{equation*}

From the estimate \eqref{4.Est.mu_i-1} for the case $\mathcal{I}\ne\emptyset$ and the estimate \eqref{4.Est.mu_i-2} for the case $\mu_\infty>0$ we arrive at
\begin{align}\label{4.Est.mu_i-3}
	\sum_{i\in \mathcal{I}\cup \{\infty\}}\nu_i\geq \sum_{i\in \mathcal{I}\cup \{\infty\}}\widetilde{m}_0\mu_i\geq \min\{k_1,k_2\}=:K_0.
\end{align}
Arguing as those lead to \eqref{P.le4.3-1} we have
\begin{align*}\label{4.26}
\notag c+o_n(1)
=&\widetilde{J}_\lambda(v_n)-\frac{1}{\beta}\left\langle \widetilde{J}_\lambda'(v_n),v_n \right\rangle\\
\ge &\left(m_0-\frac{s^+M(\bar{\tau}_0)}{\beta}\right)\int_{\mathbb{R}^N}A(x,\nabla v_n) \diff x + \left(\frac{1}{\alpha^{+}}-\frac{1}{\beta}\right)\int_{\R^N}V(x)|v_n|^{\alpha(x)} \diff x\notag\\
\notag&-\frac{\la}{\beta}\int_{\R^N}\left[C_3\varrho(x)v_n^{r(x)}+C_4b(x)v_n^{t(x)}\right] \diff x +\left(\frac{1}{\beta}-\frac{1}{t^{-}}\right)\int_{\R^N}b(x)v_n^{t(x)} \diff x\\
\ge &\left[ \left(\frac{1}{\beta}-\frac{1}{t^{-}}\right)-\frac{\la C_4}{\beta}\right]\int_{\R^N}b(x)v_n^{t(x)} \diff x -\frac{\la C_3}{\beta}\int_{\R^N}\varrho(x)v_n^{r(x)} \diff x.
\end{align*}
Since $0<\la<\lambda_1= \frac{\left(t^--\beta\right)}{2C_4t^-}$, we deduce from the last estimate that
\begin{align*}
	c+o_n(1)
	\ge &\frac{\left(t^--\beta\right)}{2\beta t^-}\int_{\R^N}b|v_n|^{t(x)} \diff x -\frac{\la C_3}{\beta}\int_{\R^N}\varrho|v_n|^{r(x)} \diff x.
\end{align*}
Passing to the limit as $n\to\infty$ in the last inequality, invoking  \eqref{34}, \eqref{4.nu} and \eqref{4.Est.mu_i-3}, we obtain
\begin{equation}\label{4.28}
c\ge \frac{\left(t^--\beta\right)}{2\beta t^-}\int_{\R^N}b|u|^{t(x)}\diff x - \frac{\la C_3}{\beta}\int_{\R^N}\varrho|u|^{r(x)}\diff x+K_1,
\end{equation}
where
\begin{equation*}
 K_1:=\frac{\left(t^--\beta\right)K_0}{2\beta t^-}.
\end{equation*}
Invoking Proposition \ref{Holder}, we have
\begin{equation*}
\int_{\R^N}\varrho|u|^{r(x)}\diff x = \int_{\R^N}b(\varrho b^{-1})|u|^{r(x)}\diff x \le 2|\varrho b^{-1}|_{L^{\frac{l(\cdot)}{l(\cdot) - 1}}(b, \R^N)}|u^{r(\cdot)}|_{L^{l(\cdot)}(b,\R^N)},
\end{equation*}
where $l(\cdot):=\frac{t(\cdot)}{r(\cdot)}$. Combining the last inequality with \eqref{4.28} gives
\begin{equation}\label{5.18}
c\ge a_{1}\int_{\R^N}b(|u|^{r(x)})^{l(x)}\diff x- b_{1}\lambda \big||u|^{r(\cdot)}\big|_{L^{l(\cdot)}(b,\R^N)} + K_1,
\end{equation}
where $a_{1}:= \frac{t^--\beta}{2\beta t^-}$, $b_{1}:=\frac{2C_3}{\beta}|\varrho b^{-1}|_{L^{\frac{l(\cdot)}{l(\cdot) - 1}}(b,\R^N)}>0$. Note that by Proposition \ref{norm-modular} we have
\begin{equation*}
	\int_{\R^N}b(|u|^{r(x)})^{l(x)}\diff x\geq \min\left\{\big||u|^{r(\cdot)}\big|_{L^{l(\cdot)}(b,\R^N)}^{l^-}, \big||u|^{r(\cdot)}\big|_{L^{l(\cdot)}(b,\R^N)}^{l^+}\right\}.
\end{equation*}
Using this fact, we consider the following two cases.

$\bullet$ If $||u|^{r(\cdot)}|_{L^{l(\cdot)}(b,\R^N)}\ge 1$, then \eqref{5.18} yields
\begin{equation*}
c\ge a_{1} \xi^{l^{-}} - b_{1}\lambda \xi + K_1=:g_{1}(\xi)\quad\text{with }\xi = \big||u|^{r(\cdot)}\big|_{L^{l(\cdot)}(b,\R^N)}\ge 1.
\end{equation*}
Thus,
\begin{equation*}
c\ge \min_{\xi\ge 0} g_{1}(\xi) = g_{1}\left(\left(\frac{b_{1}\lambda}{l^{-}a_{1}}\right)^{\frac{1}{l^{-}-1}}\right),
\end{equation*}
i.e.,
\begin{equation*}
c\ge K_1 - \left[(l^{-})^{-\frac{1}{l^{-}-1}}-(l^{-})^{-\frac{l^{-}}{l^{-}-1}}\right]a_{1}^{-\frac{1}{l^{-}-1}}b_{1}^{\frac{l^{-}}{l^{-}-1}}\lambda^{\frac{l^{-}}{l^{-}-1}}.
\end{equation*}

$\bullet$ If $||u|^{r(\cdot)}|_{L^{l(\cdot)}(b,\R^N)}< 1$, then \eqref{5.18} yields
\begin{equation*}
c\ge a_{1} \xi^{l^{+}} - b_{1}\lambda \xi + K_1=:g_{2}(\xi)\quad\text{with }\xi = \big||u|^{r(\cdot)}\big|_{L^{l(\cdot)}(b,\R^N)}< 1.
\end{equation*}
Thus,
\begin{equation*}
c\ge \min_{\xi\ge 0} g_{2}(\xi) = g_{2}\left(\left(\frac{b_{1}\lambda}{l^{+}a_{1}}\right)^{\frac{1}{l^{+}-1}}\right),
\end{equation*}
i.e.,
\begin{equation*}
c\ge K_1 - \left[(l^{+})^{-\frac{1}{l^{+}-1}}-(l^{+})^{-\frac{l^{+}}{l^{+}-1}}\right]a_{1}^{-\frac{1}{l^{+}-1}}b_{1}^{\frac{l^{+}}{l^{+}-1}}\lambda^{\frac{l^{+}}{l^{+}-1}}.
\end{equation*}
Therefore, in any case, we obtain
\begin{equation*}
c\ge K_1- K_2\max\left\{\lambda^{\frac{l^{+}}{l^{+}-1}},\lambda^{\frac{l^{-}}{l^{-}-1}}\right\},
\end{equation*}
where
$$K_2:= \max_{*\in\{+,-\}}\left[(l^{*})^{-\frac{1}{l^{*}-1}} -(l^{*})^{-\frac{l^{*}}{l^{*}-1}}\right]a_{1}^{-\frac{1}{l^{*}-1}}b_{1}^{\frac{l^{*}}{l^{*}-1}}.$$
This contradicts with \eqref{c-sub}; that is, we have shown that $\mathcal{I} =\emptyset$ and $\mu_\infty=\nu_\infty=0$. Hence, \eqref{34} yields $\int_{\R^N}bv_n^{t(x)}\diff x\to \int_{\R^N}bu^{t(x)}\diff x$.  From this and \eqref{31} we obtain
\begin{equation}\label{4.31}
v_n\to u\quad\text{in }L^{t(\cdot)}(b,\R^N)
\end{equation}
in view of Lemma \ref{V.brezis-lieb}. Moreover, we also have
\begin{equation}\label{4.32}
	v_n\to u \quad\text{in }L^{r_i(\cdot)}(\varrho_i,\R^N)\ (i=1,\cdots,m)
\end{equation}
in view of \eqref{IV.CompactImb} and \eqref{4.w-conv}.
Using \eqref{4.31}, \eqref{4.32}, $(\mathcal{F}_1)$ and invoking the H\"{o}lder type inequality (Proposition \ref{Holder}), we easily obtain
\begin{equation}\label{4.34}
	\int_{\R^N}f(x,v_n)(v_n-u)\diff x\to 0,\ \int_{\R^N}bv_n^{t(x)-1}(v_n-u)\diff x\to 0.
\end{equation}

On the other hand, by the monotonicity of $\xi\mapsto a(x,\xi)$ (due to $(A2)$) and $\tau\mapsto |\tau|^{\alpha(x)-2} \tau$ we have
\begin{align*}
	0\leq&\int_{\R^N}\left[\left(a(x,\nabla v_n)-a(x,\nabla u)\right)\cdot(\nabla v_n-\nabla u)+V(x)\left(|v_n|^{\alpha(x)-2}v_n-|u|^{\alpha(x)-2}u\right)(v_n-u)\right]\diff x\\
	\leq &\widetilde{ m}_0^{-1}M_0\left(\int_{\mathbb{R}^N}A(x,\nabla v_n)\diff x\right)\int_{\R^N}\left(a(x,\nabla v_n)-a(x,\nabla u)\right)\cdot(\nabla v_n-\nabla u)\diff x\\
	&+\widetilde{m}_0^{-1}\int_{\mathbb{R}^N}V(x)\left(|v_n|^{\alpha(x)-2}v_n-|u|^{\alpha(x)-2}u\right)(v_n-u)\diff x\\
		\leq &\widetilde{ m}_0^{-1}\left[M_0\left(\int_{\mathbb{R}^N}A(x,\nabla v_n)\diff x\right)\int_{\R^N}a(x,\nabla v_n)\cdot(\nabla v_n-\nabla u)\diff x+\int_{\mathbb{R}^N}V(x)|v_n|^{\alpha(x)-2}v_n(v_n-u)\diff x\right]\\
	&-\widetilde{ m}_0^{-1}\left[M_0\left(\int_{\mathbb{R}^N}A(x,\nabla v_n)\diff x\right)\int_{\R^N}a(x,\nabla u)\cdot(\nabla v_n-\nabla u)\diff x+\int_{\mathbb{R}^N}V(x)|u|^{\alpha(x)-2}u(v_n-u)\diff x\right].
\end{align*}
This leads to
\begin{align}\label{4.EstS+}
\notag 0\leq&\int_{\R^N}\left[\left(a(x,\nabla v_n)-a(x,\nabla u)\right)\cdot(\nabla v_n-\nabla u)+V(x)\left(|v_n|^{\alpha(x)-2}v_n-|u|^{\alpha(x)-2}u\right)(v_n-u)\right]\diff x\\
\leq & \widetilde{ m}_0^{-1}\left[\left\langle \widetilde{J}_\lambda'(v_n),v_n-u \right\rangle+\int_{\mathbb{R}^N}f(x,v_n)(v_n-u)\diff x+\int_{\mathbb{R}^N}b(x)v_n^{t(x)-1}(v_n-u)\diff x\right]-\widetilde{ m}_0^{-1}H_n,
\end{align}
where
$$H_n:=M_0\left(\int_{\mathbb{R}^N}A(x,\nabla v_n)\diff x\right)\int_{\R^N}a(x,\nabla u)\cdot(\nabla v_n-\nabla u)\diff x+\int_{\mathbb{R}^N}V(x)|u|^{\alpha(x)-2}u(v_n-u)\diff x.$$
By the boundedness of $\{v_n\}_{n\in\mathbb{N}}$ in $X$, \eqref{Est.M0}, \eqref{PSc},  and \eqref{4.w-conv} we easily see that
\begin{equation}\label{4.36}
\lim_{n\to\infty} 	\left\langle \widetilde{J}_\lambda'(v_n),v_n-u \right\rangle=\lim_{n\to\infty} H_n=0.
\end{equation}
Utilizing \eqref{4.34} and \eqref{4.36} we infer from \eqref{4.EstS+} that
\begin{equation*}
	\lim_{n\to\infty}\int_{\R^N}\left[\left(a(x,\nabla v_n)-a(x,\nabla u)\right)\cdot(\nabla v_n-\nabla u)+V(x)\left(|v_n|^{\alpha(x)-2}v_n-|u|^{\alpha(x)-2}u\right)(v_n-u)\right]\diff x=0.
\end{equation*}
Invoking \eqref{4.w-conv} again, we obtain from the last equality that
\begin{equation*}
	\lim_{n\to\infty}\int_{\R^N}\left[a(x,\nabla v_n)\cdot(\nabla v_n-\nabla u)+V(x)|v_n|^{\alpha(x)-2}v_n(v_n-u)\right]\diff x=0.
\end{equation*}
From this and \eqref{4.w-conv} we derive $v_n\to u$ in $X$ in view of \cite[Lemma 4.7-(ii)]{Zhang-Radulescu.2018}. Combining this with \eqref{lim-neg-un} gives $u_n=v_n-u_n^-\to u$. Finally, to obtain the conclusion for $J_\lambda$, we directly argue with $\{u_n\}_{n\in\mathbb{N}}$ instead of $\{v_n\}_{n\in\mathbb{N}}$ in the above arguments. The proof is complete.
\end{proof}
The next two lemmas provide several geometries of $\widetilde{J}_\lambda$. Set
\begin{equation}\label{4.delta0}
	\delta_0:=\min\left\{C_7^{-1}, \left(C_7^{-t^+}4^{-1}\widetilde{ m}_0\alpha_1\right)^{\frac{1}{t^--q_\alpha^+}},1\right\},
\end{equation}
where $\widetilde{ m}_0:=\min\{1,m_0\}$ while $q_\alpha$, $\alpha_1$ and $C_7$ are given by \eqref{Def.p_alpha}, \eqref{Est.A} and \eqref{4.EstNorms}, respectively.
\begin{lemma} \label{Le4.4}
For each $\delta\in (0,\delta_0)$, there exists $\lambda_2=\lambda_2(\delta)$ such that for any $\lambda\in (0,\lambda_2)$, there exists $\rho_\lambda>0$ such that
\begin{equation*}
	\widetilde{J}_\lambda(u)\geq \rho_\lambda,\quad \forall u\in \partial B_\delta,
\end{equation*}
where $\partial B_\delta:=\left\{u\in X: \|u\|=\delta\right\}$.
\end{lemma}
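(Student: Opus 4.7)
The strategy is to show that, on the sphere $\|u\|=\delta$, the coercive modular contribution dominates both the subcritical $\lambda$-perturbation and the critical term, provided $\delta$ is below the explicit threshold $\delta_0$ in \eqref{4.delta0} and $\lambda$ is small. First, I would invoke $\widehat{M}_0(\tau)\ge m_0\tau$ from \eqref{Est.hat.M0} together with \eqref{Def.A}, noting that $F^+(x,\cdot)=F(x,(\cdot)^+)$ so the pointwise upper bound in $(\mathcal{F}_2)$ still applies, to obtain
\begin{equation*}
\widetilde{J}_\lambda(u) \ge \widetilde{m}_0\,\mathcal{A}(u) - \lambda C_5\int_{\R^N}\varrho(x)|u|^{r(x)}\diff x - \Bigl(\lambda C_6+\tfrac{1}{t^{-}}\Bigr)\int_{\R^N}b(x)|u|^{t(x)}\diff x.
\end{equation*}

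Next, for $u\in\partial B_\delta$ with $\delta\in(0,\delta_0)$, the choice $\delta_0\le\min\{1,C_7^{-1}\}$ ensures $\delta\le 1$ and $C_7\delta<1$. Then \eqref{Est.A} yields $\mathcal{A}(u)\ge\alpha_1\delta^{q_\alpha^{+}}$ (the minimum in \eqref{Est.A} reduces to the $q_\alpha^+$-power since $\delta\le 1$), while \eqref{4.EstNorms} combined with Proposition~\ref{norm-modular} delivers
$$\int_{\R^N}b(x)|u|^{t(x)}\diff x\le (C_7\delta)^{t^{-}}\quad\text{and}\quad\int_{\R^N}\varrho(x)|u|^{r(x)}\diff x\le (C_7\delta)^{r^{-}}.$$
The third factor in \eqref{4.delta0}, namely $\delta_0\le(4^{-1}\widetilde{m}_0\alpha_1 C_7^{-t^+})^{1/(t^--q_\alpha^{+})}$ (admissible because $q_\alpha^{+}<s^+<t^{-}$ by $(P2)$), is engineered precisely so that $(C_7\delta)^{t^{-}}\le\widetilde{m}_0\alpha_1\delta^{q_\alpha^{+}}/4$. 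This absorbs the critical term into the coercive one and leaves
\begin{equation*}
\widetilde{J}_\lambda(u) \ge \widetilde{m}_0\alpha_1\delta^{q_\alpha^{+}}\Bigl(1-\tfrac{1}{4t^{-}}-\tfrac{\lambda C_6}{4}\Bigr) - \lambda C_5 C_7^{r^{-}}\delta^{r^{-}}.
\end{equation*}

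Finally, since $r^{-}\le r^{+}<p_\alpha^{-}\le q_\alpha^{+}$ by $(\mathcal{F}_2)$ and \eqref{Def.p_alpha}, the exponent $q_\alpha^{+}-r^{-}$ is strictly positive, so defining
$$\lambda_2(\delta):=\min\Bigl\{\tfrac{1}{C_6},\;\tfrac{\widetilde{m}_0\alpha_1\,\delta^{q_\alpha^{+}-r^{-}}}{4C_5C_7^{r^{-}}}\Bigr\}$$
forces both $\lambda$-dependent terms to be at most $\widetilde{m}_0\alpha_1\delta^{q_\alpha^{+}}/4$ whenever $\lambda\in(0,\lambda_2(\delta))$, so that
$$\widetilde{J}_\lambda(u)\ge\rho_\lambda:=\tfrac{\widetilde{m}_0\alpha_1}{4}\delta^{q_\alpha^{+}}>0,\quad\forall u\in\partial B_\delta,$$
which is the desired conclusion. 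The only delicate point is the matching in step two: one must read off from \eqref{4.delta0} the \emph{precise} exponent $1/(t^{-}-q_\alpha^{+})$ so that the critical nonlinearity can be absorbed uniformly on $\partial B_\delta$ independently of $\lambda$; everything else is routine bookkeeping between the Luxemburg norm and the modular via Proposition~\ref{norm-modular} and the embedding \eqref{4.EstNorms}.
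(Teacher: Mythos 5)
Your proposal is correct and follows essentially the same route as the paper: bound $\widetilde{J}_\lambda$ from below using \eqref{Est.hat.M0}, $(\mathcal{F}_2)$, \eqref{Est.A}, \eqref{4.EstNorms} and Proposition~\ref{norm-modular}, absorb the critical modular into $\widetilde{m}_0\alpha_1\delta^{q_\alpha^+}$ via the second factor in the definition \eqref{4.delta0} of $\delta_0$ (using $C_7>1$ and $q_\alpha^+<t^-$), and then shrink $\lambda$ to control the remaining terms. The only difference is cosmetic bookkeeping: your $\lambda_2(\delta)$ carries the extra factor $\delta^{q_\alpha^+-r^-}$, which is a perfectly valid (indeed slightly more careful) way to close the final comparison between the $\delta^{q_\alpha^+}$ and $\delta^{r^-}$ terms.
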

\begin{proof}
	Let $\delta\in (0,\delta_0)$ be given and let $\lambda<(C_6t^-)^{-1}$ with $C_6$ given in $(\mathcal{F}_2)$. For $u\in\partial B_\delta$, $$\max\left\{|u|_{L^{r(\cdot)}(\varrho,\R^N)}, |u|_{L^{t(\cdot)}(b,\R^N)} \right\}\leq C_7\|u\|<C_7\delta_0<1\ \text{ and } \ \|u\|<\delta_0<1$$
	in view of \eqref{4.EstNorms} and \eqref{4.delta0}. Thus,
	by utilizing $(\mathcal{F}_2)$, \eqref{Est.A}, \eqref{Est.hat.M0}, \eqref{4.EstNorms} and invoking Proposition~\ref{norm-modular} we have
\begin{align}\label{P.le4.4-1}
	\notag\widetilde{J}_\lambda(u)&\geq \widetilde{ m}_0\mathcal{A}(u)-C_5\lambda\int_{\R^N}\varrho(x)|u|^{r(x)}\diff x-\left(C_6\lambda+\frac{1}{t^-}\right)\int_{\R^N}b(x)|u|^{t(x)}\diff x\\
	\notag&\geq \widetilde{ m}_0\mathcal{A}(u)-C_5\lambda\int_{\R^N}\varrho(x)|u|^{r(x)}\diff x-\frac{2}{t^-}\int_{\R^N}b(x)|u|^{t(x)}\diff x\\
	&\geq \widetilde{ m}_0\alpha_1\delta^{q_\alpha^+}-C_7^{r^-}\lambda\delta^{r^-}-\frac{2}{t^-}C_7^{t^-}\delta^{t^-},\quad \forall u\in\partial B_\delta.
\end{align}
Noticing $\frac{2}{t^-}C_7^{t^-}\delta^{t^-}\leq \frac{1}{2}\widetilde{ m}_0\alpha_1\delta^{q_\alpha^+}$, inequality \eqref{P.le4.4-1} yields
\begin{align}
	\notag\widetilde{J}_\lambda(u)\geq \frac{1}{2}\widetilde{ m}_0\alpha_1\delta^{q_\alpha^+}-C_7^{r^-}\lambda\delta^{r^-}=C_7^{r^-}\delta^{r^-}\left(\frac{1}{2}\widetilde{m}_0\alpha_1C_7^{-r^-}-\lambda\right), \quad\forall u\in\partial B_\delta.
\end{align}
So, by taking $\lambda_2:=\min\left\{(C_6t^-)^{-1},\frac{1}{2}\widetilde{m}_0\alpha_1C_7^{-r^-}\right\}$ and $\rho_\lambda:=C_7^{r^-}\delta^{r^-}\left(\frac{1}{2}\widetilde{m}_0\alpha_1C_7^{-r^-}-\lambda\right)$, we deduce that for $\lambda\in (0,\lambda_2)$ it holds
\begin{align}
	\notag\widetilde{J}_\lambda(u)\geq \rho_\lambda>0, \quad\forall u\in\partial B_\delta.
\end{align}
The proof is complete.
\end{proof}

Set
\begin{equation}\label{lambda3}
	\lambda_3:=\beta\left(C_2t^+\right)^{-1},
\end{equation}
where $\beta$ and $C_2$ are taken from $(\mathcal{F}_2)$. Then, we have the following.
\begin{lemma} \label{Le4.5}
	Let $\lambda\in (0,\lambda_3)$ with $\lambda_3$ given by \eqref{lambda3}. Then, for any $\phi\in X\setminus\{0\}$ with $\phi\geq 0$ we find $\tau_{\lambda,\phi}\in (0,1)$ such that
	$$\widetilde{J}_\lambda(\tau\phi)<0,\quad \forall \tau\in (0,\tau_{\lambda,\phi}).$$
\end{lemma}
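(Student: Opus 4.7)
The plan is to reduce $\widetilde{J}_\lambda(\tau\phi)$ to an expression of the form $A\tau^{p_\alpha^-}-B\tau^{r^+}$ with $A,B>0$ and then exploit the gap $r^+<p_\alpha^-$ guaranteed by $(\mathcal{F}_2)$. Because $\phi\ge 0$, for every $\tau>0$ one has $(\tau\phi)^+=\tau\phi$ and, by \eqref{positive_F}, $F^+(x,\tau\phi)=F(x,\tau\phi)$ a.e.\ in $\R^N$, so the ``${+}$''-decorated terms in $\widetilde{J}_\lambda$ reduce to their plain counterparts.

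I would then insert the left-hand inequality of $(\mathcal{F}_2)$, $\beta F(x,\tau\phi)\ge C_1\varrho(\tau\phi)^{r(x)}-C_2 b(\tau\phi)^{t(x)}$, into the term $-\lambda\int F(x,\tau\phi)\diff x$. This produces a spurious $+\tfrac{\lambda C_2}{\beta}\int b(\tau\phi)^{t(x)}\diff x$, which I would absorb against the existing $-\int \tfrac{b}{t(x)}(\tau\phi)^{t(x)}\diff x$ into a single term with pointwise coefficient $\tfrac{\lambda C_2}{\beta}-\tfrac{1}{t(x)}$. The hypothesis $\lambda<\lambda_3=\beta/(C_2 t^+)$ is tailored precisely so that this coefficient is non-positive pointwise; this absorption step is the only place $\lambda_3$ is used and is the delicate point of the argument. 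Discarding the now non-positive combined term and invoking $\widehat{M}_0(s)\le M(\bar{\tau}_0)s$ from \eqref{Est.hat.M0} yields
\[
\widetilde{J}_\lambda(\tau\phi)\le \max\{M(\bar{\tau}_0),1\}\,\mathcal{A}(\tau\phi)-\frac{\lambda C_1}{\beta}\int_{\R^N}\varrho(x)(\tau\phi)^{r(x)}\diff x.
\]

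For $\tau\in(0,\min\{1,\|\phi\|^{-1}\})$ one has $\|\tau\phi\|<1$, so \eqref{Est.A} gives $\mathcal{A}(\tau\phi)\le \alpha_2\tau^{p_\alpha^-}\|\phi\|^{p_\alpha^-}$, while $\tau<1$ and $r(x)\le r^+$ force $\int_{\R^N}\varrho(\tau\phi)^{r(x)}\diff x\ge \tau^{r^+}\int_{\R^N}\varrho\phi^{r(x)}\diff x$. The latter integral is finite by the imbedding $X\hookrightarrow L^{r(\cdot)}(\varrho,\R^N)$ from \eqref{IV.CompactImb} and strictly positive because $\phi\ge 0$, $\phi\not\equiv 0$, and $\varrho>0$ a.e.\ (built into $\varrho\in A_r$). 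Hence the bound takes the shape $\widetilde{J}_\lambda(\tau\phi)\le \tau^{r^+}\bigl(A_\phi\tau^{p_\alpha^--r^+}-B_{\lambda,\phi}\bigr)$ with $A_\phi,B_{\lambda,\phi}>0$, and since $r^+<p_\alpha^-$ by $(\mathcal{F}_2)$, the right-hand side is negative on a nontrivial interval $(0,\tau_{\lambda,\phi})$ with $\tau_{\lambda,\phi}:=\min\{1,\|\phi\|^{-1},(B_{\lambda,\phi}/A_\phi)^{1/(p_\alpha^--r^+)}\}\in(0,1)$. I do not anticipate any substantive obstacle beyond the absorption step highlighted above.
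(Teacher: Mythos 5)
Your proposal is correct and follows essentially the same route as the paper's proof: bound $\widehat{M}_0$ by $M(\bar{\tau}_0)$ times its argument, use the left inequality of $(\mathcal{F}_2)$ (with $F^+=F$ since $\phi\ge 0$), absorb the resulting $b$-term into $-\int \frac{b}{t(x)}(\tau\phi)^{t(x)}\diff x$ using $\lambda<\lambda_3=\beta(C_2t^+)^{-1}$, and then compare the powers $\tau^{p_\alpha^-}$ and $\tau^{r^+}$ via \eqref{Est.A} and $r^+<p_\alpha^-$. The only cosmetic point is to shrink $\tau_{\lambda,\phi}$ strictly below $1$ in case your minimum equals $1$, which is immediate.
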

\begin{proof}
	Let $\tau\in (0,1)$. By \eqref{Est.A}, \eqref{Est.hat.M0} and $(\mathcal{F}_2)$ we have
	\begin{align*}
		\widetilde{J}_\lambda(\tau\phi)\leq &M(\bar{\tau}_0)\int_{\mathbb{R}^N}A(x,\nabla (\tau\phi)) \diff x + \frac{1}{\alpha^{^-}}\int_{\R^N}V(x)(\tau\phi)^{\alpha(x)} \diff x\\
		&-\frac{\la }{\beta}\int_{\R^N}\left[C_1\varrho(x)(\tau\phi)^{r(x)}-C_2b(x)(\tau\phi)^{t(x)}\right] \diff x-\frac{1}{t^{+}}\int_{\R^N}b(x)(\tau\phi)^{t(x)} \diff x\\
		\leq &(1+M(\bar{\tau}_0))\mathcal{A}(\tau\phi)-\frac{\la C_1 }{\beta}\int_{\R^N}\varrho(x)(\tau\phi)^{r(x)} \diff x-\left(\frac{1}{t^{+}}-\frac{C_2\la}{\beta}\right)\int_{\R^N}b(x)(\tau\phi)^{t(x)} \diff x\\
		\leq &(1+M(\bar{\tau}_0))\left(1+\|\phi\|^{q_\alpha^+}\right)\tau^{p_\alpha^-}-\frac{\la C_1 }{\beta}\left(\int_{\R^N}\varrho(x)\phi^{r(x)} \diff x\right)\tau^{r^+}.
	\end{align*}
	Since $r^+<p_\alpha^-$, the conclusion follows from the last estimate.	
\end{proof}

\begin{proof}[\textbf{Proof of Theorem~\ref{Theo.Sub1}}]
	Let $\delta_*$ be such that
	\begin{equation}\label{P.Theo4.1-1}
		\begin{cases}
			0<\delta_*<\delta_0,\\
			\alpha_2\delta_*^{p_\alpha^-}<\bar{\tau}_0,
		\end{cases}
	\end{equation}
see the notations $p_\alpha$, $\alpha_2$, $\bar{\tau}_0$ and $\delta_0$ in \eqref{Def.p_alpha}, \eqref{Est.A}, \eqref{4.1-t0} and \eqref{4.delta0}, respectively. Let $\lambda_\star$ be such that
\begin{equation}
\begin{cases}\label{P.Theo4.1-2}
	0<\lambda_\star<\min \left\{\lambda_1,\lambda_2(\delta_*),\lambda_3\right\},\\
		0<K_1
	-K_2\max\left\{\lambda_\star^{\frac{l^{+}}{l^{+}-1}},\lambda_\star^{\frac{l^{-}}{l^{-}-1}}\right\},
\end{cases}
\end{equation}
where $K_1$, $K_2$, $\lambda_1$, $\lambda_2(\delta_*)$, and $\lambda_3$ are given in Lemmas~\ref{Le4.3}, \ref{Le4.4} and \ref{Le4.5}. Let $\lambda\in (0,\lambda_\star)$. Then, by Lemma~\ref{Le4.4} we find $\rho_\lambda>0$ such that
 \begin{equation*}
	\widetilde{J}_\lambda(u)\geq \rho_\lambda,\quad \forall u\in \partial B_{\delta_*}.
\end{equation*}
Define
$$c:=\underset{u\in\overline{B}_{\delta_*}}\inf \widetilde{J}_\lambda(u).$$
Then, from Lemma~\ref{Le4.5} and the definition of $c$, we deduce
	$-\infty<c<0.$
	From the Ekeland variational principle (see, for example, \cite{BN}), for each $0<\epsilon<\underset{u\in\partial B_{\delta_*}}\inf \widetilde{J}_\lambda(u)-c,$ we find $u_{\epsilon}\in \overline{B}_{\delta_*} $ such that
	\begin{equation}\label{5.ekeland}
		\begin{cases}
			\widetilde{J}_\lambda(u_{\epsilon})\leq c+\epsilon,\\
			\widetilde{J}_\lambda(u_{\epsilon})<\widetilde{J}_\lambda(u)+\epsilon\|u-u_{\epsilon}\|, \ \forall u\in \overline{B}_{\delta_*} , u \ne u_{\epsilon}.
		\end{cases}
	\end{equation}
	Thus, $u_{\epsilon}\in B_{\delta_*}$ since $\widetilde{J}_\lambda(u_{\epsilon})\leq c+\epsilon<\underset{u\in\partial B_{\delta_*}}\inf \widetilde{J}_\lambda(u).$ Hence, we deduce from \eqref{5.ekeland} that
	\begin{equation}\label{5.estJ1'}
		\|\widetilde{J}_\lambda'(u_\epsilon)\|_{X^\ast}\leq \epsilon.
	\end{equation}
	From \eqref{5.ekeland} and \eqref{5.estJ1'}, we produce a $(\textup{PS})_c$-sequence $\{u_n\}_{n\in\mathbb{N}}$ for $\widetilde{J}_\lambda$. Notice that by \eqref{P.Theo4.1-2} we have $$c<0<K_1
	-K_2\max\left\{\lambda^{\frac{l^{+}}{l^{+}-1}},\lambda^{\frac{l^{-}}{l^{-}-1}}\right\}.$$
	Thus, up to a subsequence we get that $u_n\to u_\lambda$ in $X$ in view of Lemma~\ref{Le4.3}. So, $\widetilde{J}_\lambda'(u_\lambda)= 0$ and $\widetilde{J}_\lambda(u_\lambda)= c<0$. By \eqref{Est.A} and \eqref{P.Theo4.1-1} and noticing $\|u_\lambda\|\leq \delta_*<1$ we have
	$$\int_{\R^N}A(x,\nabla u_\lambda)\,\diff x\leq \mathcal{A}(u_\lambda)\leq \alpha_2\|u_\lambda\|^{p_\alpha^-}\leq \alpha_2\delta_*^{p_\alpha^-}\leq \bar{\tau}_0.$$
	Thus, $u_\lambda$ is a nonnegative solution to problem~\eqref{Prob}. The proof is complete.
\end{proof}

\medskip

\subsection{\textbf{Existence of infinitely many solutions}}${}$

\smallskip
In this subsection, we will prove Theorem~\ref{Theo.Sub} employing the genus theory for the truncated energy functionals. Our argument follows the proof of \cite[Theorem 2.2]{CH.2021}.

Let $0<\lambda<(C_6t^-)^{-1}=:\lambda^{(1)}$. Then, arguing as that leads to \eqref{P.le4.4-1} we have
\begin{align}\label{NNN1}
	J_\lambda(u)\geq  \widetilde{ m}_0\alpha_1\|u\|^{q_\alpha^+}-\lambda C_7^{r^+}\|u\|^{r^-}-\frac{2}{t^-}C_7^{t^+}\|u\|^{t^-},\quad \|u\|\leq 1.
\end{align}
Thus,
\begin{equation}\label{49}
J_\lambda(u)\ge g_{\la}(\|u\|)\quad\text{for }\|u\|\le1,
\end{equation}
where $g_{\la}\in C([0,\infty))$ is given by
\begin{equation*}
g_{\la}(\tau):=\widetilde{ m}_0\alpha_1\tau^{q_\alpha^+}-\lambda C_7^{r^+}\tau^{r^-}-\frac{2C_7^{t^+}}{t^-}\tau^{t^-},\quad \tau\geq 0.
\end{equation*}
Rewrite $g_{\la}(\tau)=C_7^{r^+}\tau^{r^-}(h(\tau)-\la)$ with
\begin{equation*}
h(\tau):=a_0\tau^{q_\alpha^{+}-r^{-}}-b_0\tau^{t^{-}-r^{-}},
\end{equation*}
where $a_0:=\widetilde{ m}_0\alpha_1C_7^{-r^+}>0$ and $b_{0}:=\frac{2C_7^{t^+-r^+}}{t^-}>0$. Clearly,
\begin{align}\label{50}
\la^{(2)}
&:=\max_{\tau\ge0}h(\tau)=h\left(\left[\frac{(q_\alpha^{+}-r^{-})a_0}{(t^{-}-r^{-})b_0}\right]^{\frac{1}{t^{-}-q_\alpha^{+}}}\right)\notag\\
&=a_{0}^{\frac{t^{-}-r^{-}}{t^{-}-q_\alpha^{+}}}b_{0}^{\frac{r^{-}-t^{+}}{t^{-}-q_\alpha^{+}}}\left(\frac{q_\alpha^{+}-r^{-}}{t^{-}-r^{-}}\right)^{\frac{t^{+}-r^{-}}{q_\alpha^{-}-t^{+}}} \frac{t^{-}-q_\alpha^{+}}{t^{-}-r^{-}}>0
\end{align}
and for any $\la\in\left(0,\la^{(2)}\right)$, $g_{\la}(\tau)$ has only positive roots $\tau_1(\la)$ and $\tau_2(\la)$ with
\begin{equation}\label{P.Theo4.2.tau*}
0<\tau_1(\lambda)<\left[\frac{(q_\alpha^{+}-r^{-})a_0}{(t^{-}-r^{-})b_0}\right]^{\frac{1}{t^{-}-q_\alpha^{+}}}=:\tau_{*}<\tau_2(\lambda).
\end{equation}
Obviously, on $[0,\infty)$ it holds that $g_{\la}(\tau)<0$ if and only if $\tau\in(0,\tau_1(\lambda))\cup(\tau_2(\lambda),\infty)$. Moreover, we have
\begin{equation}\label{51}
\lim_{\la\to0^{+}}\tau_{1}(\la)=0.
\end{equation}
By \eqref{51}, we find $\la_*^{(1)}$ such that
\begin{equation}\label{52}
	\begin{cases}
		0<\la_*^{(1)}<\min\left\{\la_1,\la_3,\la^{(1)},\la^{(2)}\right\},\\
	\tau_1(\lambda)<1,\ \alpha_2\tau_{1}(\lambda)^{p_\alpha^{-}}< \alpha_1\min\left\{1,\tau_*^{q_\alpha^+}\right\},\ \ \forall\la\in(0,\la_*^{(1)}),
	\end{cases}
\end{equation}
where $\la_1$ and $\la_3$ are given in \eqref{lambda1} and \eqref{lambda3}, respectively. For each $\la\in(0,\la_{*}^{(1)})$, we consider the truncated functional $T_{\la}:X\to\mathbb{R}$ given by
\begin{multline*}
T_{\la}(u)
:=\widehat{M}_0\left(\int_{\R^N}A(x,\nabla u)\diff x\right)+\int_{\R^N}\frac{V(x)}{\alpha(x)}|u|^{\alpha(x)}\diff x\\
\quad -\phi\left(\mathcal{A}(u)\right)\left[\la\int_{\R^N}F(x,u)\diff x+\int_{\R^N}\frac{b(x)}{t(x)}|u|^{t(x)}\diff x\right]
\end{multline*}
for $u\in X$, where $\mathcal{A}$ is given by \eqref{Def.A} and $\phi \in C_{c}^{\infty}(\mathbb{R})$, $0\le\phi(\tau)\le1$ for all $\tau\in\mathbb{R}$, $\phi(\tau)=1$ for $|\tau|\le \alpha_2\tau_{1}(\lambda)^{p_\alpha^{-}}$ and $\phi(\tau)=0$ for $|\tau|\ge \alpha_1\min\left\{1,\tau_*^{q_\alpha^+}\right\}$. Clearly, $T_{\la}\in C^{1}(X,\mathbb{R})$ and
\begin{equation}\label{54}
T_{\la}(u)\ge J_\lambda(u)\ \ \text{for all} \ \ u\in X.
\end{equation}
Moreover, we have
\begin{equation}\label{55}
	T_{\la}(u)=J_\lambda(u) \ \ \text{if} \ \ \mathcal{A}(u)<\alpha_2\tau_{1}(\lambda)^{p_\alpha^{-}},
\end{equation}
and
\begin{equation}\label{56}
T_{\la}(u)=\widehat{M}_0\left(\int_{\R^N}A(x,\nabla u)\diff x\right)+\int_{\R^N}\frac{V(x)}{\alpha(x)}|u|^{\alpha(x)}\diff x \ \ \text{if} \ \ \mathcal{A}(u)>\alpha_1\min\left\{1,\tau_*^{q_\alpha^+}\right\}.
\end{equation}

\begin{lemma}\label{Le4.6}
Let $\la\in(0,\la_*^{(1)})$. Then, it holds that
\begin{equation}
\mathcal{A}(u)<\alpha_2\tau_{1}(\lambda)^{p_\alpha^{-}},\ T_{\la}(u)=J_\lambda(u),\ T_{\la}'(u)=J_\lambda'(u)
\end{equation}
for any $u\in X$ satisfying $T_{\la}(u)<0$.
\end{lemma}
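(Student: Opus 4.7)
The plan is to reduce everything to the single fact that $\phi(\mathcal{A}(u)) = 1$ and $\phi'(\mathcal{A}(u)) = 0$ whenever $T_\lambda(u) < 0$. Once this is established, $T_\lambda(u) = J_\lambda(u)$ is immediate from the definition of $T_\lambda$, and $T_\lambda'(u) = J_\lambda'(u)$ follows from the chain rule because the only extra term produced by differentiating the cutoff carries the now-vanishing factor $\phi'(\mathcal{A}(u))$. Thus the proof reduces to showing $\mathcal{A}(u) < \alpha_2 \tau_1(\lambda)^{p_\alpha^-}$, which is simultaneously the first assertion of the lemma.

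The first step is to rule out the outer range. If $\mathcal{A}(u) \ge \alpha_1 \min\{1, \tau_*^{q_\alpha^+}\}$, then by \eqref{56} one has $T_\lambda(u) = \widehat{M}_0\!\left(\int_{\R^N} A(x,\nabla u)\diff x\right) + \int_{\R^N} \frac{V(x)}{\alpha(x)} |u|^{\alpha(x)}\diff x \ge 0$, contradicting $T_\lambda(u) < 0$. Hence $\mathcal{A}(u) < \alpha_1 \min\{1, \tau_*^{q_\alpha^+}\}$, and the lower bound in \eqref{Est.A} gives $\min\{\|u\|^{p_\alpha^-},\|u\|^{q_\alpha^+}\} < \min\{1, \tau_*^{q_\alpha^+}\}$. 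A short case split on whether $\|u\| > 1$ (which would make the minimum equal to $\|u\|^{p_\alpha^-} > 1$, a contradiction) yields $\|u\| \le 1$ and hence $\|u\| < \min\{1,\tau_*\}$.

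Knowing $\|u\| \le 1$, I would then combine \eqref{49} with \eqref{54} to obtain $g_\lambda(\|u\|) \le J_\lambda(u) \le T_\lambda(u) < 0$. The zero-structure of $g_\lambda$ recorded in \eqref{P.Theo4.2.tau*} then forces $\|u\| \in (0, \tau_1(\lambda)) \cup (\tau_2(\lambda), \infty)$; since $\|u\| \le \tau_* < \tau_2(\lambda)$ from the previous step, the second interval is excluded and $\|u\| < \tau_1(\lambda) < 1$. The upper bound in \eqref{Est.A} then gives $\mathcal{A}(u) \le \alpha_2 \|u\|^{p_\alpha^-} < \alpha_2 \tau_1(\lambda)^{p_\alpha^-}$, as required; this value lies strictly inside the plateau on which $\phi \equiv 1$, so $\phi(\mathcal{A}(u)) = 1$ and $\phi'(\mathcal{A}(u)) = 0$, closing the argument.

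The only delicate bookkeeping is the interplay between the modular $\mathcal{A}(u)$ and the norm $\|u\|$ through the asymmetric two-sided estimate \eqref{Est.A}; the compatibility of the two thresholds defining $\phi$ — namely $\alpha_2 \tau_1(\lambda)^{p_\alpha^-} < \alpha_1 \min\{1, \tau_*^{q_\alpha^+}\}$, which is built into \eqref{52} — together with the strict inequality $\tau_1(\lambda) < \tau_*$ from \eqref{P.Theo4.2.tau*}, is precisely what makes the chain of implications close up.
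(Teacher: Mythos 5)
Your proposal is correct and follows essentially the same route as the paper: both arguments use \eqref{54} and \eqref{49} to get $g_\lambda(\|u\|)<0$, the truncation identity \eqref{56} together with \eqref{Est.A} to exclude $\|u\|>1$ and $\|u\|>\tau_*$, the root structure \eqref{P.Theo4.2.tau*} to conclude $\|u\|<\tau_1(\lambda)$ and hence $\mathcal{A}(u)<\alpha_2\tau_1(\lambda)^{p_\alpha^-}$, and then \eqref{55} (equivalently $\phi\equiv1$, $\phi'\equiv0$ on that range) for the equality of the functionals and their derivatives. The only difference is cosmetic ordering — you exclude the outer region $\mathcal{A}(u)\ge\alpha_1\min\{1,\tau_*^{q_\alpha^+}\}$ in one step rather than the paper's two separate contradictions — which is harmless since $\phi$ vanishes on that closed set.
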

\begin{proof}
Let $u\in X$ satisfy  $T_{\la}(u)<0$. Then, we have $J_\lambda(u)<0$ due to \eqref{54}. If $\|u\|>1$, then by \eqref{Est.A}, $\mathcal{A}(u)>\alpha_1$ and therefore, \eqref{56} yields
\begin{equation*}
\widehat{M}_0\left(\int_{\R^N}A(x,\nabla u)\diff x\right)+\int_{\R^N}\frac{V(x)}{\alpha(x)}|u|^{\alpha(x)}\diff x =T_\lambda(u)<0,
\end{equation*}
a contradiction. Thus, we have $\|u\|\le1$ and hence, it follows from \eqref{49} that $g_{\la}(\|u\|)\le J_\lambda(u)<0$. Thus, $\|u\|<\tau_1(\lambda)$ or $\|u\|>\tau_2(\lambda)>\tau_{*}$ due to \eqref{P.Theo4.2.tau*}. If $\|u\|>\tau_{*}$, then by \eqref{Est.A} again we have
$$\mathcal{A}(u)\geq \alpha_1\|u\|^{q_\alpha^{+}}>\alpha_1\tau_{*}^{q_\alpha^{+}};$$
hence, $T_{\la}(u)\ge0$ due to \eqref{56}, a contradiction. Thus, $\|u\|<\tau_1(\lambda)$; hence, $\mathcal{A}(u)<\alpha_2\tau_{1}(\lambda)^{p_\alpha^{-}}$. Then the conclusion follows from \eqref{55} and the definition of $T_\lambda$. The proof is complete.
\end{proof}
Next, we will construct sequence of critical points $\{u_n\}_{n\in\N}$ of $T_{\la}$ such that $\mathcal{A}(u_n)<\alpha_2\tau_{1}(\lambda)^{p_\alpha^{-}}$ via genus theory. Let us denote by $\Sigma$ the set of all closed subset $E\subset X\setminus\{0\}$ such that $E=-E$, namely, $u\in E$ implies $-u\in E$. For $E\in\Sigma$, let us denote the genus of a $E$ by $\gamma(E)$ (see \cite{Rab1992} for the definition and properties of the genus).
\begin{lemma}\label{Le4.7}
	Let $\la\in(0,\la_*^{(1)})$. Then, for each $k\in\mathbb{N}$, there exists $\epsilon>0$ such that
	\begin{equation}
		\gamma(T_{\la}^{-\epsilon})\ge k,
	\end{equation}
	where $T_{\la}^{-\epsilon}:=\{u\in X:T_{\la}(u)\le-\epsilon\}$.
\end{lemma}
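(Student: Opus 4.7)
The plan is the standard finite-dimensional-subspace argument adapted to the truncated functional $T_{\la}$: I will find, for each $k$, a $k$-dimensional subspace $X_k \subset X$ and a radius $\rho$ so small that the sphere $S_\rho^k := \{u \in X_k : \|u\| = \rho\}$ sits inside the sublevel set $T_\la^{-\epsilon}$ for some $\epsilon = \epsilon(k,\la) > 0$. Since $S_\rho^k$ is homeomorphic (by an odd map) to the standard sphere in $\R^k$, its genus equals $k$, and monotonicity of the genus then gives $\gamma(T_{\la}^{-\epsilon}) \geq \gamma(S_\rho^k) = k$.

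To build $X_k$, I invoke Proposition~\ref{Density} to pick $k$ linearly independent $\phi_1,\dots,\phi_k \in C_c^\infty(\R^N)$ and set $X_k := \mathrm{span}\{\phi_1,\dots,\phi_k\}$. Because $\varrho > 0$ a.e. on $\R^N$ (as $\varrho \in A_r$), the map $u \mapsto |u|_{L^{r(\cdot)}(\varrho,\R^N)}$ is a genuine norm on $X_k$, hence equivalent to $\|\cdot\|$ on the finite-dimensional space $X_k$: there is $c_k > 0$ with $|u|_{L^{r(\cdot)}(\varrho,\R^N)} \geq c_k \|u\|$ for every $u \in X_k$.

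Next I estimate $J_\la$ from above on $X_k$. For $u \in X_k$ with $\|u\| = \rho \leq \min\{1, c_k^{-1}\}$, the estimate \eqref{Est.A} yields $\mathcal{A}(u) \leq \alpha_2 \rho^{p_\alpha^-}$, and $\widehat{M}_0 \leq M(\bar\tau_0)\cdot\mathrm{id}$ (see \eqref{Est.hat.M0}) controls the Kirchhoff term. Using the left inequality in $(\mathcal{F}_2)$, namely $-\la F(x,u) \leq -\frac{\la C_1}{\beta}\varrho|u|^{r(x)} + \frac{\la C_2}{\beta}b|u|^{t(x)}$, together with $\la < \la_3 = \beta/(C_2 t^+)$ (which makes the coefficient $\frac{\la C_2}{\beta} - \frac{1}{t^+}$ nonpositive), I obtain
\[
J_\la(u) \leq (M(\bar\tau_0)+1)\alpha_2 \rho^{p_\alpha^-} - \frac{\la C_1}{\beta}\int_{\R^N}\varrho(x)|u|^{r(x)}\diff x.
\]
For $\rho \leq c_k^{-1}$ one has $|u|_{L^{r(\cdot)}(\varrho,\R^N)} \leq 1$, whence by Proposition~\ref{norm-modular} the modular dominates $|u|_{L^{r(\cdot)}(\varrho,\R^N)}^{r^+} \geq c_k^{r^+}\rho^{r^+}$. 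Thus
\[
J_\la(u) \leq \rho^{r^+}\bigl[(M(\bar\tau_0)+1)\alpha_2 \rho^{p_\alpha^- - r^+} - \tfrac{\la C_1 c_k^{r^+}}{\beta}\bigr].
\]
Since $r^+ < p_\alpha^-$, choosing $\rho = \rho(k,\la) > 0$ small enough (and in particular $\rho < \tau_1(\la)$, which is legitimate since $\tau_1(\la) > 0$) makes the bracket strictly negative; call the resulting bound $-\epsilon$ with $\epsilon > 0$. The constraint $\rho < \tau_1(\la)$ guarantees $\mathcal{A}(u) \leq \alpha_2 \rho^{p_\alpha^-} < \alpha_2 \tau_1(\la)^{p_\alpha^-}$, so by \eqref{55} we have $T_\la(u) = J_\la(u) \leq -\epsilon$ on $S_\rho^k$. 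Hence $S_\rho^k \subset T_\la^{-\epsilon}$ and $\gamma(T_\la^{-\epsilon}) \geq k$ follows from the genus monotonicity and $\gamma(S_\rho^k) = k$.

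The only mildly delicate point is ensuring that $|\cdot|_{L^{r(\cdot)}(\varrho,\R^N)}$ really is a norm (not just a seminorm) on $X_k$, which needs $\varrho > 0$ a.e.; this is built into the definition of the admissible class $A_r$, so no additional work is required. Everything else is a careful but routine bookkeeping of the constants already present in $(\mathcal{F}_2)$, \eqref{Est.A}, \eqref{Est.hat.M0}, and the definitions of $\la_3$, $\tau_1(\la)$.
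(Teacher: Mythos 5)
Your argument is essentially the paper's own proof: restrict to a $k$-dimensional subspace, use equivalence of norms there together with the left inequality in $(\mathcal{F}_2)$, $\la<\la_3$ and $r^+<p_\alpha^-$ to force $T_\la=J_\la\le-\epsilon$ on a small sphere lying in the region $\mathcal{A}(u)<\alpha_2\tau_1(\la)^{p_\alpha^-}$ where the truncation is inactive, and conclude by monotonicity of the genus. The only blemish is the claim that $\rho\le c_k^{-1}$ forces $|u|_{L^{r(\cdot)}(\varrho,\R^N)}\le 1$: that requires the upper equivalence constant (e.g. $C_7$ from \eqref{4.EstNorms}), not the lower constant $c_k$, but since $\rho$ is ultimately taken small this is a harmless bookkeeping slip rather than a gap.
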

From the assumption of $A$ and $f$, $T_{\la}^{-\epsilon}$ is a closed subset of $X\setminus\{0\}$ and is symmetric with respect to the origin; hence, $\gamma(T_{\la}^{-\epsilon})$ is well defined.
\begin{proof}[\textbf{Proof of Lemma~\ref{Le4.7}}]
	Let $k\in\mathbb{N}$ and let $X_{k}$ be a subspace of $X$ of dimension $k$. Since all norms on $X_k$ are mutually equivalent, we find $\delta_{k}>\tau_1(\lambda)^{-1}(>1)$ such that
	\begin{equation}\label{P.Le4.3-1}
		\delta_{k}^{-1}|u|_{L^{r(\cdot)}(\varrho,\R^N)}\le\|u\|\le\delta_{k}|u|_{L^{r(\cdot)}(\varrho,\R^N)}\quad\forall u\in X_{k}.
	\end{equation}
	For $u\in X_{k}$ with $\|u\|<\delta_{k}^{-1}$, thus, $\max\{\|u\|, |u|_{L^{r(\cdot)}(\varrho,\R^N)}\}<1$ and $\mathcal{A}(u)<\alpha_2\tau_1(\lambda)^{p_\alpha^{-}}$ due to \eqref{P.Le4.3-1} and \eqref{Est.A}, by arguing as the proof of Lemma~\ref{Le4.5} we deduce from Proposition~\ref{norm-modular}, ($\mathcal{F}_2$), \eqref{4.EstNorms} and \eqref{55} that
	\begin{equation*}
		T_{\la}(u)=J_\lambda(u)\le (M(\bar{\tau}_0)+1)\|u\|^{p_\alpha^{-}}-\frac{\la C_1}{\beta}|u|_{L^{r(\cdot)}(\varrho,\R^N)}^{r^{+}}\le (M(\bar{\tau}_0)+1)\|u\|^{p_\alpha^{-}}-\frac{\la C_1 C_7^{r^+}}{\beta}\|u\|^{r^{+}}.
	\end{equation*}
	That is,
	\begin{equation*}
		T_{\la}(u)\le(M(\bar{\tau}_0)+1)\|u\|^{r^{+}}\left(\|u\|^{p_\alpha^{-}-r^{+}}-\frac{\la C_1 C_7^{r^+}}{(M(\bar{\tau}_0)+1)\beta}\right).
	\end{equation*}
	Thus, by taking $\eta$ with $$0<\eta<\min\left\{\delta_{k}^{-1},\left[\frac{\la C_1 C_7^{r^+}}{(M(\bar{\tau}_0)+1)\beta}\right]^{\frac{1}{p_\alpha^{-}-r^{+}}}\right\}$$ and $$\epsilon:=-(M(\bar{\tau}_0)+1)\eta^{r^{+}}\left(\eta^{p_\alpha^{-}-r^{+}}-\frac{\la C_1 C_7^{r^+}}{(M(\bar{\tau}_0)+1)\beta}\right)>0,$$ we have $T_{\la}(u)<-\epsilon<0,$
 for all $u\in S_{\eta}:=\{u\in X_{k}: \|u\|=\eta\}$. This yields $S_{\eta}\subset T_{\la}^{-\epsilon}$; hence, $\gamma(T_{\la}^{-\epsilon})\ge\gamma(S_{\eta})=k$
	and this completes the proof.
\end{proof}
Now, for each $k\in\mathbb{N}$, define
\begin{equation*}
	\Sigma_k:=\{E\in\Sigma: \gamma(E)\ge k\}
\end{equation*}
and
\begin{equation*}
	c_k:=\inf_{E\in \Sigma_k}\sup_{u\in E}T_{\la}(u).
\end{equation*}
As in \cite[Lemma 3.9]{Fig-Nas.EJDE2016}, we have
\begin{equation}\label{P.Theo4.2-ck}
	c_k<0,\ \ \forall k\in\N.
\end{equation}
Let $\la_{*}^{(2)}>0$ be such that
\begin{equation*}
	K_1-K_2\max\left\{\left(\la_{*}^{(2)}\right)^{\frac{l^{+}}{l^{+}-1}},\left(\la_{*}^{(2)}\right)^{\frac{l^{-}}{l^{-}-1}}\right\}>0,
\end{equation*}
where $K_1$, $K_2$ and $l$ are as in Lemma~\ref{Le4.3}. Set
\begin{equation}\label{57}
	\la_{*}:=\min\{\la_{*}^{(1)},\la_{*}^{(2)}\},
\end{equation}
where $\la_{*}^{(1)}$ is given by \eqref{52}. We have the following.
\begin{lemma}\label{Le4.8}
	For $\la\in(0,\la_{*})$, if $c=c_{k}=c_{k+1}=\cdots=c_{k+m}$ for some $m\in\mathbb{N}$, then
	\begin{equation*}
		\gamma(K_c)\ge m+1,
	\end{equation*}
	where $K_{c}:=\{u\in X\setminus\{0\}:T_{\la}'(u)=0\text{ and }T_{\la}(u)=c\}$.
\end{lemma}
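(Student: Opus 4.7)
The plan is to adapt the standard Clark--Rabinowitz genus multiplicity argument (see e.g. \cite{Rab1992}) to the truncated functional $T_\lambda$. Three ingredients must be checked before the textbook proof applies: evenness of $T_\lambda$, a local Palais--Smale condition at negative levels, and the usual equivariant deformation lemma. Evenness follows immediately from $(A1)$, from $f(x,-\tau)=-f(x,\tau)$ (hence $F(x,-\tau)=F(x,\tau)$), and from the even dependence of $\mathcal{A}(u)$, $\int_{\R^N}\frac{V}{\alpha}|u|^{\alpha(x)}\,\diff x$ and $\int_{\R^N}\frac{b}{t}|u|^{t(x)}\,\diff x$ on $u$; combined with $\phi(\mathcal{A}(-u))=\phi(\mathcal{A}(u))$ this makes $T_\lambda$ even.

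Next, I would show that $T_\lambda$ satisfies the $(\textup{PS})_{c'}$ condition for every $c'<0$. If $\{u_n\}_{n\in\N}$ is a $(\textup{PS})_{c'}$-sequence for $T_\lambda$, then $T_\lambda(u_n)<0$ for $n$ large, so Lemma~\ref{Le4.6} gives $T_\lambda(u_n)=J_\lambda(u_n)$ and $T_\lambda'(u_n)=J_\lambda'(u_n)$ for such $n$; thus $\{u_n\}$ is a $(\textup{PS})_{c'}$-sequence for $J_\lambda$. Because $\la<\la_*^{(2)}$ in \eqref{57}, we have $c'<0<K_1-K_2\max\{\la^{l^+/(l^+-1)},\la^{l^-/(l^--1)}\}$, so the last assertion of Lemma~\ref{Le4.3} yields a strongly convergent subsequence. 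In particular, $K_c$ is compact for our fixed $c<0$ (nonnegativity $c<0$ comes from \eqref{P.Theo4.2-ck}), and $0\notin K_c$ since $T_\lambda(0)=0>c$, hence $K_c\in\Sigma$ and $\gamma(K_c)$ is well-defined.

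Now argue by contradiction: suppose $\gamma(K_c)\le m$. By the continuity property of the Krasnoselskii genus on compact sets, there exists a symmetric open neighborhood $U$ of $K_c$ with $\overline{U}\in\Sigma$ and $\gamma(\overline{U})\le m$. The even functional $T_\lambda\in C^1(X,\R)$ together with the $(\textup{PS})_{c}$ condition permits the equivariant deformation lemma: there exist $\epsilon\in(0,-c/2)$ and an odd homeomorphism $\eta:X\to X$ such that
\[
\eta\bigl(T_\lambda^{c+\epsilon}\setminus U\bigr)\subset T_\lambda^{c-\epsilon}.
\]
By the definition of $c_{k+m}$, choose $E\in\Sigma_{k+m}$ with $\sup_{E}T_\lambda<c+\epsilon$, so $E\subset T_\lambda^{c+\epsilon}$. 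Sub-additivity and monotonicity of the genus yield
\[
\gamma\bigl(\overline{E\setminus U}\bigr)\ge \gamma(E)-\gamma(\overline{U})\ge (k+m)-m=k,
\]
and since $\eta$ is an odd homeomorphism, $\gamma(\eta(\overline{E\setminus U}))\ge k$, i.e.\ $\eta(\overline{E\setminus U})\in\Sigma_k$. But $\sup_{\eta(\overline{E\setminus U})}T_\lambda\le c-\epsilon<c=c_k$, contradicting the definition of $c_k$. Hence $\gamma(K_c)\ge m+1$.

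The only non-routine step is the $(\textup{PS})_{c'}$ condition for $T_\lambda$ at negative levels, and the key reason it works is the design of the truncation: Lemma~\ref{Le4.6} guarantees that below the zero level $T_\lambda$ and $J_\lambda$ agree together with their derivatives, so the delicate concentration--compactness analysis already carried out for $J_\lambda$ in Lemma~\ref{Le4.3} transfers directly. After that, the argument is the textbook genus multiplicity scheme applied on the symmetric class $\Sigma$ with the standard deformation lemma for $C^1$ even functionals satisfying Palais--Smale.
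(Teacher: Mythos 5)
Your proposal is correct and follows essentially the same route as the paper: you establish compactness of $K_c$ by combining Lemma~\ref{Le4.6} (so that below the zero level $T_\lambda$, $T_\lambda'$ coincide with $J_\lambda$, $J_\lambda'$) with the local $(\textup{PS})_c$ condition of Lemma~\ref{Le4.3} valid since $c<0<K_1-K_2\max\{\lambda^{l^+/(l^+-1)},\lambda^{l^-/(l^--1)}\}$, and then run the standard equivariant deformation/genus contradiction argument, which is exactly the "standard argument" the paper delegates to the cited references of Bernis--Garcia Azorero--Peral and Figueiredo--Nascimento. You merely write out explicitly the textbook step the paper cites, so there is no substantive difference.
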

\begin{proof}
	Let $\la\in(0,\la_{*})$. Then, by \eqref{P.Theo4.2-ck} and the choice of $\la_{*}$ we have
	\begin{equation*}
		c<0<	K_1-K_2\max\left\{\lambda^{\frac{l^{+}}{l^{+}-1}},\lambda^{\frac{l^{-}}{l^{-}-1}}\right\}.
	\end{equation*}
	Thus, $K_c$ is a compact set in view of Lemmas \ref{Le4.3} and \ref{Le4.6}. Using this fact and a standard argument for which the deformation lemma is applied, we derive the desired conclusion (see, for example, \cite[Lemma 4.4]{Ber-Gar-Per.1996} or \cite[Lemma 3.10]{Fig-Nas.EJDE2016}.
\end{proof}
\begin{proof}[\textbf{Proof of Theorem~\ref{Theo.Sub}}]
	Let $\la_{*}$ be defined as in \eqref{57}. Let $\la\in(0,\la_{*})$. In view of Lemma \ref{Le4.8}, we find sequence $\{u_n\}_{n\in\mathbb{N}}$ of critical points of  $T_{\la}$  with $T_{\la}(u_n)<0$ for all $n\in\mathbb{N}$. By Lemma~\ref{Le4.6}, $\{u_n\}_{n\in\mathbb{N}}$ are solutions to problem~\eqref{Prob}. Let us denote by $u_{\la}$ one of $u_n$. By Lemma~\ref{Le4.6} again, we have
	\begin{equation*}
		\mathcal{A}(u_n)<\alpha_2\tau_{1}(\lambda)^{p_\alpha^{-}}.
	\end{equation*}
	From this and \eqref{Est.A} we arrive at
	\begin{equation*}
		\lim_{\la\to 0^{+}}\|u_{\la}\|=0,
	\end{equation*}
	and the proof is complete.
\end{proof}

\appendix
\section{Proof of Inequality \eqref{Est.A}}

\begin{proof}[\textbf{Proof of Inequality \eqref{Est.A}}] We will prove that \eqref{Est.A} holds true with
	 $$\alpha_1=
	\min\left\{M_{1},\frac{1}{\alpha^{+}}\right\}3^{-q_\alpha^+}\ \ \text{and} \ \ \alpha_2=3\max\left\{2^{2q^+}M_{2},\frac{1}{\alpha^{-}}\right\}.$$
	Let $u\in X$ and we first prove the lower bound of $\mathcal{A}(u)$ as follows. By \eqref{Est1} we have
	\begin{align}\label{p1}
		\notag\mathcal{A}(u)&\ge M_{1}\left[\int_{\R^{N}}|\chi_{\{|\nabla u|\geq 1\}}\nabla u|^{p(x)}\diff x+\int_{\R^{N}}|\chi_{\{|\nabla u|\leq 1\}}\nabla u|^{q(x)}\diff x\right]+\frac{1}{\alpha^{+}}\int_{\R^{N}}V(x)|u|^{\alpha(x)}\diff x\\
		&\geq m_0\mathcal{B}(u),
			\end{align}
		where
		$$\mathcal{B}(u):=\int_{\R^{N}}\left[|\chi_{\{|\nabla u|\geq 1\}}\nabla u|^{p(x)}+|\chi_{\{|\nabla u|\leq 1\}}\nabla u|^{q(x)}+V(x)|u|^{\alpha(x)}\right]\diff x\ \  \text{and} \ \ m_0:=\min\left\{M_{1},\frac{1}{\alpha^{+}}\right\}.$$
	For brevity, set
	$$a_0:=\big||\chi_{\{|\nabla u|\geq 1\}}\nabla u|\big|_{L^{p(\cdot)}(\R^{N})},\ b_0:=\big||\chi_{\{|\nabla u|\leq 1\}}\nabla u|\big|_{L^{q(\cdot)}(\R^{N})},\ \text{and}\  c_0:=|u|_{L^{\alpha(\cdot)}(V,\R^{N})}.$$ Thus, it holds $$a_0+b_0+c_0\ge\|u\|.$$
	\vskip5pt
	$\bullet$ \textit{Case $\mathcal{A}(u)<m_{0}.$}
	
	\vskip5pt
	In this case, we get from \eqref{p1} that
		\begin{align*}
		\int_{\R^{N}}|\chi_{\{|\nabla u|\geq 1\}}\nabla u|^{p(x)}\diff x+\int_{\R^{N}}|\chi_{\{|\nabla u|\leq 1\}}\nabla u|^{q(x)}\diff x+\int_{\R^{N}}V(x)|u|^{\alpha(x)}\diff x<1.
	\end{align*}
	Consequently, by invoking Proposition~\ref{norm-modular} we derive from \eqref{p1} that
	\begin{align*}
		\mathcal{A}(u)\ge m_{0}\left(a_0^{q_\alpha^+}+b_0^{q_\alpha^+}+c_0^{q_\alpha^+}\right)\
		\ge m_{0}3^{1-q_\alpha^+}\left(a_0+b_0+c_0\right)^{q_\alpha^+}\ge m_{0}3^{1-q_\alpha^+}\|u\|^{q_\alpha^+}.
	\end{align*}
	
	\vskip5pt
	$\bullet$ \textit{Case $\mathcal{A}(u)\ge m_{0}$}.
	
	\vskip5pt
	\begin{itemize}
		\item[-] If $\|u\|\le3$, then
		$$\mathcal{A}(u)\ge m_{0}3^{-q_\alpha^+}\|u\|^{q_\alpha^+}.$$
		
		\vskip5pt
		\item [-] If $\|u\|\ge3$, then $a_0+b_0+c_0\ge3.$ Thus, there exists $x\in \{a_0,b_0,c_0\}$ such that $x\geq 1$.
		
		\vskip5pt
		If $x$ is the only element of $\{a_0,b_0,c_0\}$ that is greater than or equal 1 and the remaining two elements are denoted by $y,z$, then $y,z\le1$ and it holds
		$$
		x\ge\frac{1}{3}(x+y+z)=\frac{1}{3}(a_0+b_0+c_0)\ge1.
		$$
		Thus, by \eqref{p1} and Proposition~\ref{norm-modular} we obtain
		\begin{align*}
			\mathcal{A}(u)\ge m_0x^{p_\alpha^{-}}\ge m_0\left[\frac{1}{3}(a_0+b_0+c_0)\right]^{p_\alpha^{-}}\ge m_03^{-p_\alpha^{-}}\|u\|^{p_\alpha^{-}}.
		\end{align*}
	
	If there are only two of $\{a_0,b_0,c_0\}$ greater or equal 1, denoted by $x,y$, and the remaining element is denoted by $z$, then $x,y\ge 1> z$ and
	$$
	x+y\ge\frac{2}{3}(x+y+z)= \frac{2}{3}(a_0+b_0+c_0).
	$$
	Thus, by \eqref{p1} and Proposition~\ref{norm-modular} again we obtain
	\begin{align*}
		\mathcal{A}(u)&\ge m_0\left(x^{p_\alpha^{-}}+y^{p_\alpha^{-}}\right)\ge m_02^{1-p_\alpha^{-}}(x+y)^{p_\alpha^{-}}\\
&\ge m_02^{1-p_\alpha^{-}}\left[\frac{2}{3}(a_0+b_0+c_0)\right]^{p_\alpha^{-}}\geq m_{0}3^{-p_\alpha^{-}}\|u\|^{p_\alpha^{-}}.
	\end{align*}
	
	Finally, for $a_0\ge1$, $b_0\ge1$ and $c_0\ge1$, we have
	$$
	\mathcal{A}(u)\ge m_{0}(a_0^{p_\alpha^{-}}+b_0^{p_\alpha^{-}}+c_0^{p_\alpha^{-}})\geq m_{0}3^{1-p_\alpha^{-}}(a_0+b_0+c_0)^{p_\alpha^{-}}\ge m_{0}3^{1-p_\alpha^{-}}\|u\|^{p_\alpha^{-}}.
	$$
	\end{itemize}
 In any case, we obtain the following lower bound of $\mathcal{A}(u)$:
 $$
 \mathcal{A}(u)\ge m_03^{-q_\alpha^+}\min\left\{\|u\|^{p_\alpha^{-}},\|u\|^{q_\alpha^{+}}\right\}.
 $$
	
	\vskip5pt
	Next, we prove the upper bound of $\mathcal{A}(u)$. To this end, let $v\in(L^{p(\cdot)}(\R^{N}))^{N}$ and $w\in(L^{q(\cdot)}(\R^{N}))^{N}$ be such that
	$$
	\nabla u=v+w\quad\text{and}\quad\big||\nabla u|\big|_{L^{p(\cdot)}(\R^{N})+L^{q(\cdot)}(\R^{N})}=|v|_{L^{p(\cdot)}(\R^{N})}+|w|_{L^{q(\cdot)}(\R^{N})}.
	$$
From this and \eqref{Est1} we obtain
	\begin{align}\label{p6*}
		\mathcal{A}(u)&\le M_{2}\int_{\{|\nabla u|\geq 1\}}|\nabla u|^{p(x)}\diff x+M_{2}\int_{\{|\nabla u|< 1\}}|\nabla u|^{q(x)}\diff x+\frac{1}{\alpha^{-}}\int_{\R^{N}}V(x)|u|^{\alpha(x)}\diff x\notag\\
		&\le M_{2}2^{p^{+}-1}\int_{\{|\nabla u|\geq 1\}}\left[|v|^{p(x)}+|w|^{p(x)}\right]\diff x+M_{2}2^{q^+-1}\int_{\{|\nabla u|< 1\}}\left[|v|^{q(x)}+|w|^{q(x)}\right]\diff x\notag\\
		&\quad+\frac{1}{\alpha^{-}}\int_{\R^{N}}V(x)|u|^{\alpha(x)}\diff x.
	\end{align}
	
	\vskip5pt
	$\bullet$ On $\{|\nabla u|\geq 1\}$, we have
	$$
	|v|+|w|\ge|\nabla u|\ge1.
	$$
	\begin{itemize}
		\item[-] If $|w|\ge\frac{1}{2}$, then we have $|w|^{p(x)}\le2^{(q-p)^{+}}|w|^{q(x)}$, and thus,
		$$
		|v|^{p(x)}+|w|^{p(x)}\le2^{(q-p)^{+}}\left(|v|^{p(x)}+|w|^{q(x)}\right).
		$$
		\item[-] If $|w|<\frac{1}{2}$, then we have $|v|>\frac{1}{2}$, and thus,
		$$
		|v|^{p(x)}+|w|^{p(x)}\le2|v|^{p(x)}\le2\left(|v|^{p(x)}+|w|^{q(x)}\right).
		$$
	\end{itemize}
	
	So, we infer
	\begin{align}\label{p7}
		\int_{\{|\nabla u|\geq 1\}}\left[|v|^{p(x)}+|w|^{q(x)}\right]\diff x\le2^{1+(q-p)^{+}}\int_{\{|\nabla u|\geq 1\}}\left[|v|^{p(x)}+|w|^{q(x)}\right]\diff x.
	\end{align}
	
	\vskip5pt
	$\bullet$ On $\{|\nabla u|< 1\}$, we have $|v+w|=|\nabla u|<1$.
	\begin{itemize}
		\item[-] If $|w|\ge1$, then
		$$
		|v|\le|v+w|+|w|\le2|w|.
		$$
		Thus, we know
		$$
		|v|^{q(x)}+|w|^{q(x)}\le2^{1+q^+}|w|^{q(x)}\le2^{1+q^+}(|v|^{p(x)}+|w|^{q(x)}).
		$$
		\item[-] 	If $|w|<1$, then
		$$
		|v|\le|v+w|+|w|<2.
		$$
		Thus, one has $|v|^{q(x)}\le2^{(q-p)^{+}}|v|^{p(x)}.$
		Hence, we get $$|v|^{q(x)}+|w|^{q(x)}\le2^{(q-p)^{+}}(|v|^{p(x)}+|w|^{q(x)}).$$
	\end{itemize}

	Thus, it holds
	\begin{align}\label{p8}
		\int_{\{|\nabla u|< 1\}}\left[|v|^{q(x)}+|w|^{q(x)}\right]\diff x\le2^{1+q^+}\int_{\{|\nabla u|< 1\}}\left[|v|^{p(x)}+|w|^{q(x)}\right]\diff x.
	\end{align}
	From \eqref{p6*}-\eqref{p8}, we obtain
	\begin{align*}
		\mathcal{A}(u)&\le M_{2}2^{p^{+}+(q-p)^{+}}\int_{\{|\nabla u|\geq 1\}}\left[|v|^{p(x)}+|w|^{q(x)}\right]\diff x+M_{2}2^{2q^+}\int_{\{|\nabla u|< 1\}}\left[|v|^{p(x)}+|w|^{q(x)}\right]\diff x\notag\\
		&\quad+\frac{1}{\alpha^{-}}\int_{\R^{N}}V(x)|u|^{\alpha(x)}\diff x\\
		&\le2^{2q^+}M_{2}\left(\int_{\R^{N}}|v|^{p(x)}\diff x+\int_{\R^{N}}|w|^{q(x)}\diff x\right)+\frac{1}{\alpha^{-}}\int_{\R^{N}}V(x)|u|^{\alpha(x)}\diff x.
	\end{align*}
	Applying Proposition~\ref{norm-modular} again we drive from the last inequality that
	\begin{align}\label{p9}
		\mathcal{A}(u)&\le2^{2q^+}M_{2}\left[\max\left\{|v|_{L^{p(\cdot)}(\R^{N})}^{p^{-}},|v|_{L^{p(\cdot)}(\R^{N})}^{p^{+}}\right\} +\max\left\{|w|_{L^{q(\cdot)}(\R^{N})}^{q^{-}},|w|_{L^{q(\cdot)}(\R^{N})}^{q^+}\right\}\right]\notag\\
		&\quad+\frac{1}{\alpha^{-}}\max\left\{|u|_{L^{\alpha(\cdot)}(V,\R^{N})}^{\alpha^{-}},|u|_{L^{\alpha(\cdot)}(V,\R^{N})}^{\alpha^{+}}\right\}.
	\end{align}
	Note that $|v|_{L^{p(\cdot)}(\R^{N})}+|w|_{L^{q(\cdot)}(\R^{N})}+|u|_{L^{\alpha(\cdot)}(V,\R^{N})}=\|u\|$. We will distinguish two cases.
	\begin{itemize}
		\item [-] If $\|u\|\ge1$, then by \eqref{p9}, we have
		\begin{align*}
			\mathcal{A}(u)&\le2^{2q^+}M_{2}\left(\|u\|^{p^{+}}+\|u\|^{q^+}\right)+\frac{1}{\alpha^{-}}\|u\|^{\alpha^{+}}\notag\\
			&\le3\max\left\{2^{2q^+}M_{2},\frac{1}{\alpha^{-}}\right\}\|u\|^{q_\alpha^+}.
		\end{align*}
	\item [-] If $\|u\|<1$, then $|v|_{L^{p(\cdot)}(\R^{N})}<1$, $|w|_{L^{q(\cdot)}(\R^{N})}<1$ and $|u|_{L^{\alpha(\cdot)}(V,\R^{N})}<1$, and thus, \eqref{p9} yields
	\begin{align*}
		\mathcal{A}(u)&\le2^{2q^+}M_{2}\left(|v|_{L^{p(\cdot)}(\R^{N})}^{p^{-}}+|w|_{L^{q(\cdot)}(\R^{N})}^{q^{-}}\right)+\frac{1}{\alpha^{-}}|u|_{L^{\alpha(\cdot)}(V,\R^{N})}^{\alpha^{-}}\\
		&\le\max\left\{2^{2q^+}M_{2},\frac{1}{p_\alpha^{-}}\right\}\left(|v|_{L^{p(\cdot)}(\R^{N})}^{p_\alpha^{-}}+|w|_{L^{q(\cdot)}(\R^{N})}^{p_\alpha^{-}}+|u|_{L^{\alpha(\cdot)}(V,\R^{N})}^{p_\alpha^{-}}\right)\\
		&\le\max\left\{2^{2q^+}M_{2},\frac{1}{\alpha^{-}}\right\}\left(|v|_{L^{p(\cdot)}(\R^{N})}+|w|_{L^{q(\cdot)}(\R^{N})}+|u|_{L^{\alpha(\cdot)}(V,\R^{N})}\right)^{p_\alpha^{-}}=\max\left\{2^{2q^+}M_{2},\frac{1}{\alpha^{-}}\right\}\|u\|^{p_\alpha^{-}}.
	\end{align*}
		\end{itemize}
	
	In summary, we obtain the upper bound of $\mathcal{A}(u)$ as
	$$
	\mathcal{A}(u)\le3\max\left\{2^{2q^+}M_{2},\frac{1}{\alpha^{-}}\right\}\max\left\{\|u\|^{p_\alpha^{-}},\|u\|^{q_\alpha^+}\right\}.
	$$
	The proof is complete.
\end{proof}

\subsection*{Acknowledgements}
K. Ho was partially supported by the National Research Foundation of Korea (NRF) grant funded by the Korea government (MSIT) (grant No. 2022R1A4A1032094). Y.-H. Kim was supported by the Basic Science Research Program through the National Research Foundation of Korea (NRF) funded by the Ministry of Education (NRF-2019R1F1A1057775). C. Zhang was supported by the National Natural Science Foundation of China (No. 12071098).


\begin{thebibliography}{99}


\bibitem{APS}
G. Autuori, P. Pucci, M. Salvatori,
Global nonexistence for nonlinear {K}irchhoff systems,
Arch. Ration. Mech. Anal. {\bf 196} (2010), no. 2, 489--516.

\bibitem{Azzollini2014} A. Azzollini, P. d'Avenia, A. Pomponio, Quasilinear elliptic equations in $\4{R}^N$ via variational methods and Orlicz--Sobolev embeddings, Calc. Var. Partial Differential Equations {\bf 49} (2014), 197--213.

\bibitem{Azzollini2015} A. Azzollini, Minimum action solutions for a quasilinear equation, J. Lond. Math. Soc. {\bf 92} (2015), 583--595.


\bibitem{BRR} A. Bahrouni, V.D. R\u{a}dulescu, D.D. Repov\v s, Double phase transonic flow problems with variable growth: nonlinear patterns and stationary waves,  Nonlinearity {\bf 32} (2019), 2481--2495.

\bibitem{BCM} P. Baroni, M. Colombo, G. Mingione, Harnack inequalites for double phase funtionals, Nonlinear Anal. {\bf 121} (2015), 206--222.

\bibitem{BCM2} P. Baroni, M. Colombo, G. Mingione, Non-autonomous functionals, borderline cases and related function classes, St. Petersburg Math. J. {\bf 27} (2016), no. 3, 347--379.

\bibitem{BCM3} P. Baroni, M. Colombo, G. Mingione, Regularity for general functionals with double phase, Calc. Var. Partial Differential Equations {\bf 57} (2018), Paper No. 62, 48 pp.

    \bibitem{BTW} A.K. Ben-Naouma, C. Troestler, M. Willem,
Extrema problems with critical Sobolev exponents on unbounded domains, Nonlinear Anal. {\bf 26} (1996), 823--833.



\bibitem{Ber-Gar-Per.1996} F. Bernis, J. Garcia Azorero, I. Peral Alonso,
Existence and multiplicity of nontrivial solutions in semilinear critical problems of fourth-order, Adv. Differential Equations {\bf 1} (1996), 219--240.

\bibitem{BCS}G. Bianchi, J. Chabrowski, A. Szulkin,
Symmetric solutions of an elliptic equation with a nonlinearity involving critical Sobolev exponent, Nonlinear Anal. {\bf 25} (1995), 41--59.

\bibitem{Bonder} J.F. Bonder, A. Silva,
Concentration-compactness principle for variable exponent spaces and applications, Electron. J. Differential Equations (2010), Paper No. 141, 18 pp.

\bibitem{Bon-Sai-Sil} J.F. Bonder, N. Saintier, A. Silva, The concentration-compactness principle for fractional order Sobolev spaces in unbounded-domains and applications to the generalized fractional Brezis-Nirenberg problem, NoDEA Nonlinear Differential Equations Appl. {\bf 25} (2018), Paper No. 52, 25 pp.


\bibitem{Brezis} H. Brezis, L. Nirenberg,
Positive solutions of nonlinear elliptic equations involving critical Sobolev exponents, Comm. Pure Appl. Math.  {\bf 36} (1983), no. 4, 437--477.

\bibitem{BN} H. Brezis, L. Nirenberg, Remarks on finding critical points,
Comm. Pure Appl. Math. {\bf 44} (1991), no. 8-9, 939--963.

\bibitem{Byun-Oh-2020}
	S.-S. Byun, J. Oh,
	 Regularity results for generalized double phase functionals,
	Anal. PDE {\bf 13} (2020),  1269--1300.

\bibitem{KCKS22}
J. Cen, S.J. Kim, Y.-H. Kim, S. Zeng, Multiplicity results of
solutions to the double phase anisotropic variational problems
involving variable exponent, Adv. Differential Equations \textbf{28} (2023), no. 5-6, 467--504.

  \bibitem{Chabrowski}J. Chabrowski,
	Concentration-compactness principle at infinity and semilinear elliptic equations involving critical and subcritical Sobolev exponents, Calc. Var. Partial Differential Equations {\bf 3} (1995), no. 4, 493--512.

\bibitem{Chorfi2016} N. Chorfi, V.D. R\v{a}dulescu, Standing wave solutions of a quasilinear degenerate Schr\"odinger equation with unbounded potential, Electron. J. Qual. Theory Differ. Equ. (2016), Paper No. 37, 12 pp.


\bibitem{CH.2021} N.T. Chung, K. Ho, On a $p(\cdot)$-biharmonic problem of Kirchhoff type involving critical growth, Appl. Anal. \textbf{101} (2022), no. 16, 5700--5726.

\bibitem{CS1} F. Colasuonno, M. Squassina, Eigenvalues for double phase variational integrals, Ann. Mat. Pura Appl. {\bf 195} (2016), 1917--1959.

\bibitem{CM} M. Colombo, G. Mingione, Regularity for double phase variational problems, Arch. Ration. Mech. Anal. {\bf 215} (2015), 443--496.

\bibitem{CM2} M. Colombo, G. Mingione, Bounded minimisers of double phase variational integrals, Arch. Ration. Mech. Anal. {\bf 218}  (2015), 219--273.

\bibitem{CM3} M. Colombo, G. Mingione, Calder\'{o}n-Zygmund estimates and non-uniformly elliptic operators, J. Funct. Anal.  {\bf 270} (2016), 1416--1478.

\bibitem{CGHW} \'{A}. Crespo-Blanco, L. Gasi\'{n}ski, P. Harjulehto and P. Winkert, A new class of double phase variable exponent problems: Existence and uniqueness, J. Differential Equations {\bf 323} (2022), 182--228.


\bibitem{Cruz} D.V. Cruz-Uribe, A. Fiorenza, \emph{  Variable Lebesgue spaces. Foundations and harmonic analysis, Applied and Numerical Harmonic Analysis}, Birkh\"auser/Springer, Heidelberg, 2013.

\bibitem{DAS}
P. D'Ancona, S. Spagnolo,
Global solvability for the degenerate Kirchhoff equation with real analytic data, Invent. Math. {\bf 108} (1992), no. 2, 247--262.

\bibitem{Diening} L. Diening,  P. Harjulehto, P. H\"ast\"o, M. R\r{u}\v{z}i\v{c}ka,
\emph{ Lebesgue and Sobolev spaces with  variable exponents},
Lecture Notes in Mathematics  2017, Springer-Verlag, Heidelberg, 2011.



\bibitem{FZ2001} X. Fan, D. Zhao,
On the spaces $L^{p(x)}(\Omega)$ and $W^{m,p(x)}(\Omega)$, J. Math.
Anal. Appl. {\bf 263} (2001), 424--446.

\bibitem{Fan2010} X. Fan, Sobolev embeddings for unbounded domain with variable exponent having values across $N$, Math. Inequal. Appl. {\bf 13} (2010), 123--134.

\bibitem{Fig-Nas.EJDE2016} G.M. Figueiredo, R.G. Nascimento,
Multiplicity of solutions for equations involving a nonlocal term and the biharmonic operator, Electron. J. Differ. Equ. (2016), Paper No. 217, 15 pp.



\bibitem{FMPV}
A. Fiscella, G. Marino, A. Pinamonti, S. Verzellesi,
Multiple solutions for nonlinear boundary value problems of Kirchhoff type on a double phase setting, Rev. Mat. Complut. (2023), 1–32.



\bibitem{Fonseca} I. Fonseca, G. Leoni,
\emph{  Modern Methods in the Calculus of Variations: $L^p$ Spaces}, Springer, New York, 2007.

\bibitem{Fu} Y. Fu, The principle of concentration compactness in $L^{p(x)}$ spaces and its application, Nonlinear Anal. {\bf 71} (2009), 1876--1892.


\bibitem{Fu-Zhang} Y. Fu, X. Zhang,
	Multiple solutions for a class of $p(x)$-Laplacian equations in involving the critical exponent, Proc. R. Soc. Lond. Ser. A Math. Phys. Eng. Sci. {\bf 466} (2010), no. 2118, 1667--1686.

\bibitem{GW20b}
L. Gasi\'nski, P. Winkert,
 Existence and uniqueness results for double phase problems with convection term,
 J. Differential Equations {\bf 268} (2020),   4183--4193.

\bibitem{GW21}
L. Gasi\'nski, P. Winkert,
 Sign changing solution for a double phase problem with nonlinear boundary condition via the Nehari manifold,
J. Differential Equations {\bf 274} (2021), 1037--1066.



\bibitem{HK.ANA2021} K. Ho, Y.-H. Kim, The concentration-compactness principles for $W^{s, p (\cdot,\cdot)}\left(\R^N\right)$ and application, Adv. Nonlinear Anal. {\bf 10} (2021), no. 1, 816--848.

\bibitem{HKL2022} K. Ho, Y.-H. Kim, J. Lee, Schrodinger $p(\cdot)$-Laplace equations in $\R^N$ involving indefinite weights and critical growth, J. Math. Phys. {\bf 62} (2021), 1--28.

\bibitem{HKS}	K. Ho, Y.-H. Kim, I. Sim, Existence results for Schrodinger $p(\cdot)$-Laplace equations involving critical growth in $\R^N$, Nonlinear Anal. {\bf 182} (2019), 20--44.

\bibitem{HS.NA2016} K. Ho, I. Sim,
On degenerate $p(x)$-Laplace equations involving critical growth with two parameters,  Nonlinear Anal. {\bf 132} (2016), 95--114.


\bibitem{KKOZ22}
I.H. Kim, Y.-H. Kim, M.W. Oh, S.\,Zeng, Existence and multiplicity of solutions to concave-convex-type double-phase problems with variable exponent, Nonlinear Anal. Real World
Applications, {\bf 67} (2022), Paper No. 103627, 25 pp.


\bibitem{Kirchhoff-1876} G.R. Kirchhoff,
\textit{Vorlesungen \"uber Mathematische Physik, Mechanik},
Teubner, Leipzig, 1876.


\bibitem{Ko-Ra1991} O. Kov\'a\v{c}ik, J. R\'{a}kosn\'ik, On spaces $L^{p(x)}$ and $W^{ k,p(x)}$, Czechoslovak Math. J. {\bf 41} (1991), 592--618.

    	\bibitem{Lions1} P.L. Lions,
The concentration-compactness principle in the calculus of variations. The locally compact case, part 1, Ann. Inst. H. Poincar\'{e} Anal. Non Lin\'{e}aire \textbf{1} (1984), no. 2, 109--145.


\bibitem{Lions3} P.L. Lions,
The concentration-compactness principle in the calculus of variations. The limit case, part 1, Rev. Mat. Iberoamericana \textbf{1} (1985), no. 1, 145--201.

\bibitem{LD} W. Liu and G. Dai, Existence and multiplicity results for double phase problem, J. Differential Equations {\bf 265} (2018), 4311--4334.

\bibitem{M1989}  P. Marcellini, Regularity of minimizers of integrals of the calculus of variations with non standard growth conditions, Arch. Ration. Mech. Anal. {\bf 105} (1989), 267--284.

\bibitem{M1991} P. Marcellini, Regularity and existence of solutions of elliptic equations with $p,q$-growth conditions, J. Differential Equations {\bf 90} (1991), 1--30.

\bibitem{M1993} P. Marcellini, Regularity for elliptic equations with general growth conditions, J. Differential Equations {\bf 105} (1993), 296--333.

\bibitem{M1996} P. Marcellini, Everywhere regularity for a class of elliptic systems without growth conditions, Ann. Scuola Norm. Sup. Pisa Cl. Sci. (IV) {\bf 23} (1996), 1--25.

 \bibitem{M2020} P. Marcellini, Anisotropic and $p,q$-nonlinear partial differential equations, Rend. Fis. Acc. Lincei {\bf 31} (2020), 295--301.

\bibitem{Min-Rad2021} G. Mingione, V.D. R\u{a}dulescu,  Recent developments in problems with nonstandard growth and nonuniform ellipticity, J. Math. Anal. Appl. {\bf 501} (2021), Paper No. 125197, 41 pp.

    \bibitem{PRR} N.S. Papageorgiou, V.D. R\u{a}dulescu, D.D. Repov\v s, Double-phase problems and a discontinuity property of the spectrum,  P. Amer. Math. Soc. {\bf 147} (2019), 2899--2910.

\bibitem{PRR20} N.S. Papageorgiou, V.D. R\u{a}dulescu, D.D. Repov\v s, Existence and multiplicity of solutions for double-phase Robin problems, B. London Math. Soc. {\bf 52} (2020), 546--560.

\bibitem{PS} K. Perera, M. Squassina, Existence results for double-phase problems via Morse theory, Commun. Contemp. Math. {\bf 20} (2018), 1750023, 14 pp.

\bibitem{Rab1992}P.H. Rabinowitz, On a class of nonlinear Schr\"odinger equations, Z. Angew. Math. Phys. {\bf 43} (1992), 270--291.

    \bibitem{RT20}
	M.A.~Ragusa, A.~Tachikawa,
	 Regularity for minimizers for functionals of double phase with variable exponents,
	Adv. Nonlinear Anal. {\bf 9} (2020), 710--728.

\bibitem{Rad-Rep}V.D. R\u{a}dulescu, D.D. Repov\v{s}, \emph{ Partial differential equations with variable exponents. Variational methods and qualitative analysis}, Monographs and Research Notes in Mathematics, CRC Press, Boca Raton, FL, 2015.


\bibitem{SRRZ}	X. Shi, V.D. R\u{a}dulescu, D.D. Repov\v{s}, Q. Zhang, Multiple solutions of double phase variational problems with variable exponent, Adv. Calc. Var. {\bf 13} (2020), no. 4, 382--401.

\bibitem{PXZ}
P. Pucci, M. Xiang, B. Zhang,
Multiple solutions for nonhomogeneous {S}chr\"{o}dinger-{K}irchhoff type equations involving the fractional {$p$}-{L}aplacian in $\mathbb{R}^N$,
Calc. Var. Partial Differential Equations {\bf 54} (2015), no. 3, 2785--2806.


\bibitem{Zhang-Radulescu.2018}	Q. Zhang, V.D. R\u{a}dulescu, Double phase anisotropic variational problems and combined effects of reaction and absorption terms, J. Math. Pures Appl. {\bf 118} (2018), no. 9, 159--203.



\bibitem{Zeng-Bai-Gasinski-Winkert-CVPDEs-2020} S.D. Zeng, Y.R. Bai, L. Gasi\'{n}ski, P. Winkert,
Existence results for double phase implicit obstacle problems involving multivalued operators,
Calc. Var. Partial Differential Equations {\bf 59} (2020), Paper No. 176, 18 pp.

\bibitem{Zeng-Bai-Gasinski-Winkert-ANONA-2021} S.D. Zeng, Y.R. Bai, L. Gasi\'{n}ski, P. Winkert,
 Convergence analysis for double phase obstacle problems with multivalued convection term,
 Adv. Nonlinear Anal. {\bf 10} (2021),  659--672.

\bibitem{Z1986} V. V. Zhikov, Averaging of functionals of the calculus of variations and elasticity theory, Izv. Akad. Nauk SSSR Ser. Mat. {\bf 50} (1986), no. 4, 675--710.

\bibitem{Z1995} V. V. Zhikov, On Lavrentiev's phenomenon, Russ. J. Math. Phys. \textbf{3} (1995), no. 2, 249--269.
\end{thebibliography}
\end{document}